\def\dmu{{{\rm d}\mu}}
\def\dx{{{\rm d}x}}
\def\supp{\operatorname{supp}}
\def\bs#1{\boldsymbol{#1}}
\def\bbR{{\mathbb R}}
\newcommand\extrafootertext[1]{%
    \bgroup
    \renewcommand\thefootnote{\fnsymbol{footnote}}%
    \renewcommand\thempfootnote{\fnsymbol{mpfootnote}}%
    \footnotetext[0]{#1}%
    \egroup
}
\begin{document}

\maketitle

% REQUIRED
\begin{abstract}
We describe fast algorithms for approximating the connection coefficients between a family of orthogonal polynomials and another family with a polynomially or rationally modified measure. The connection coefficients are computed via infinite-dimensional banded matrix factorizations and may be used to compute the modified Jacobi matrices all in linear complexity with respect to the truncation degree. A family of orthogonal polynomials with modified classical weights is constructed that support banded differentiation matrices, enabling sparse spectral methods with modified classical orthogonal polynomials. We present several applications and numerical experiments using an open source implementation which make direct use of these results.
\end{abstract}

% REQUIRED
\begin{keywords}
Orthogonal polynomials, matrix factorizations, infinite-dimensional matrices.
\end{keywords}

% REQUIRED
\begin{AMS}
33C45, 33C47, 33C50, 42C05, 15A23
% 33C45: Orthogonal polynomials and functions of hypergeometric type
% 33C47: Other special orthogonal polynomials and functions
% 33C50: Orthogonal polynomials and functions in several variables expressible in terms of special functions in one variable
% 42C05: Orthogonal functions and polynomials, general theory of nontrigonometric harmonic analysis
% 15A23: Factorization of matrices
\end{AMS}

\section{Introduction}
\extrafootertext{Communicated by Nira Dyn.}
Let $\mu$ be a positive Borel measure on the real line whose support contains an infinite number of points and has finite moments:
\begin{equation}\label{eq:finitemoments}
-\infty <\int_\bbR x^n\dmu(x) < \infty,\quad\forall n\in\mathbb{N}_0.
\end{equation}
We consider ${\bf P}(x) = (p_0(x) \quad p_1(x) \quad p_2(x) \quad \cdots)$ to be an orthonormal polynomial basis with respect to the inner-product:
\[
\langle f, g\rangle_\mu = \int_\bbR f(x) g(x)\dmu(x).
\]
That is, $\langle p_m, p_n\rangle_\mu = \delta_{m,n}$. This inner-product induces a norm, $\|f\|_\mu^2 = \langle f, f\rangle_\mu$, and the Hilbert space $L^2(\bbR, \dmu)$. We will assume that polynomials are dense in our Hilbert spaces, making them separable --- necessary and sufficient conditions for this are discussed in \cite[Theorem 2.3.3]{Akhiezer-65}.

Orthonormal polynomials are important in numerical analysis for their ability to represent $f\in L^2(\bbR, \dmu)$ via the  expansion:
\[
f(x) = \sum_{k=0}^\infty \langle f, p_k\rangle_\mu\,p_k(x) = {\bf P}(x) \bs{f},
\]
where $\bs{f} = (\langle f, p_0\rangle_\mu, \langle f, p_1\rangle_\mu, \ldots)^\top$, establishing the well-known isometric isomorphism between $L^2(\bbR, \dmu)$ and $\ell^2$ by identifying for each such function the unique corresponding infinite vector.  For simplicity we assume our function spaces are over the reals (that is, we only consider real-valued functions and vectors).

Given a nontrivial rational function $$r(x) = \frac{u(x)}{v(x)}$$ with polynomials $u$ and $v$, we will also consider ${\bf Q}(x) = (q_0(x) \quad q_1(x) \quad q_2(x) \quad \cdots)$ to be an orthonormal polynomial basis in $L^2(\bbR, r\dmu)$. The conditions on $\dmu$ are also imposed on $r \dmu$; in particular, $r(x)$ is nonnegative and nonsingular on $\supp(\mu)$. We wish to describe efficient algorithms for the connection problem between the original and the modified orthogonal polynomials, defined by the upper-triangular matrix $R$ in:
\[
{\bf P}(x) = {\bf Q}(x)R.
\]
That is, the computation of the coefficients $R_{k,n}$ (the entries of the matrix $R$) such that:
$$
p_n(x) = \sum_{k=0}^n q_k(x)R_{k,n}.
$$

Before doing so, it is important to relate this connection problem with the central object of a family of orthonormal polynomials, the Jacobi matrix. Every family of orthonormal polynomials ${\bf P}(x)$ has a Jacobi matrix $X_P$ (an infinite irreducible symmetric tridiagonal matrix), that implements multiplication by $x$:
\[
x{\bf P}(x) = {\bf P}(x)X_P.
\]
That is, if we denote the three-term recurrence as:
\[
x p_n(x) = \beta_{n-1} p_{n-1}(x) + \alpha_n p_n(x) + \beta_n p_{n+1}(x),\qquad p_{-1}(x) \equiv 0,
\]
then the nonzero entries of the Jacobi matrix $X_P$ are $(X_P)_{k,k} = \alpha_{k-1}$, $(X_P)_{k+1,k} = (X_P)_{k,k+1} = \beta_{k-1}$.
As $u$ and $v$ are polynomial it is now straightforward to define $U$ and $V$ by
\begin{align*}
u(x){\bf P}(x) &= {\bf P}(x)\underbrace{u(X_P)}_U,\qquad v(x){\bf P}(x) = {\bf P}(x)\underbrace{v(X_P)}_V
\end{align*}
as symmetric banded commuting multiplication matrices whose bandwidths are respectively equal to the degrees of the corresponding polynomials. In practice the entries of $U$ and $V$ can be efficiently computed via Clenshaw's algorithm~\cite{Clenshaw-9-118-55,Smith-19-33-65}.
We  also know that $x{\bf Q}(x) = {\bf Q}(x)X_Q$, though at this point we assume only $X_P$ is in hand.

\begin{definition}
For $0<p<\infty$, let $\ell^p$ denote the space of all sequences $\bs{v}\in\bbR^\infty$ satisfying:
\[
\sum_{n=0}^\infty |v_n|^p < \infty.
\]
By continuity, we shall consider $\ell^0$ to be the space of sequences with finitely many non-zeros.
\end{definition}

In order to establish the results of this paper we need to think of the infinite-dimensional matrices ($X_P$, $X_Q$, $U$, $V$) as operators acting on function spaces, in-particular $\ell^0$ and $\ell^2$, as we require access to functional calculus (square-roots, inverses, etc.). Note that an infinite-dimensional matrix is essentially an operator in which $\bs{e}_j^\top A \bs{e}_k \equiv \langle \bs{e}_j, A \bs{e}_k \rangle_{\ell^2}$ exists for all $j$ and $k$.

\begin{definition}
Let $P_n$ denote the canonical orthogonal projection:
\begin{equation}\label{eq:canonicalorthogonalprojection}
P_n = \begin{pmatrix} I_n & 0\\ 0 & 0\end{pmatrix},
\end{equation}
where $I_n$ is the $n\times n$ identity and the zero blocks are of conformable sizes.
\end{definition}

On $\ell^0$, triangular matrices with nonzero diagonals are trivially invertible. This can be seen by examining:
\[
P_nRP_n = \begin{pmatrix} R_n & 0\\ 0 & 0\end{pmatrix},
\]
and noting that $(R^{-1})_{j,k} = (R_n^{-1})_{j,k}$ provided that $j,k\le n$; that is, the principal finite section is taken to be sufficiently large.

\begin{theorem}[Gautschi~\cite{Gautschi-24-245-70}]\label{thm:connectioncoeffresult}
Let $X_P$ and $X_Q$ be the Jacobi matrices for the original and modified orthonormal polynomials, ${\bf P}(x)$ and ${\bf Q}(x)$, respectively. Then there exists a unique infinite invertible upper triangular operator $R : \ell^0 \rightarrow \ell^0$ such that:
\begin{enumerate}
\item $\mathbf{P}(x) = \mathbf{Q}(x) R$ and,\\
\item $RX_P = X_QR$.
\end{enumerate}
\end{theorem}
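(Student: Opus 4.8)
The plan is to construct $R$ explicitly as the matrix of connection coefficients, to show that the intertwining relation (2) is a forced consequence of (1), and to extract both uniqueness and invertibility from the triangular structure together with the linear independence of the modified basis.

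First I would establish (1) and invertibility. Since $\mathbf{Q}(x)$ is an orthonormal polynomial basis, each $q_k$ is a polynomial of exact degree $k$, so $\{q_0,\dots,q_n\}$ spans the polynomials of degree at most $n$. As $p_n$ has degree $n$, expanding it in this basis defines $R_{k,n}=\langle p_n,q_k\rangle_{r\mu}$, which vanishes for $k>n$; hence $R$ is upper triangular. The diagonal entry $R_{n,n}$ equals the ratio of the leading coefficients of $p_n$ and $q_n$, which is nonzero (and positive under the usual sign convention). Because every column of $R$ is finite, $R$ maps $\ell^0$ into $\ell^0$, and a nonvanishing diagonal makes it invertible there by the finite-section argument already recorded above.

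Next I would derive (2) from (1) by multiplying $\mathbf{P}(x)=\mathbf{Q}(x)R$ by $x$ and substituting the two three-term recurrences $x\mathbf{P}(x)=\mathbf{P}(x)X_P$ and $x\mathbf{Q}(x)=\mathbf{Q}(x)X_Q$, which yields $\mathbf{Q}(x)\,R X_P=\mathbf{P}(x)X_P=x\mathbf{P}(x)=x\mathbf{Q}(x)R=\mathbf{Q}(x)\,X_Q R$. Writing $M:=RX_P-X_QR$, this identity reads $\sum_k q_k(x)M_{k,n}=0$ for every $n$ and all $x$, so linear independence of $\{q_k\}$ forces $M=0$, i.e. exactly (2). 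Uniqueness then follows in the same manner: two matrices satisfying (1) differ by a $D$ with $\mathbf{Q}(x)D=0$, whence $D=0$, so property (1) alone pins down $R$ and (2) holds automatically for it.

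\textbf{Main obstacle.} I expect the only genuinely delicate point to be justifying the algebra of infinite-dimensional objects --- the associativity $(\mathbf{Q}(x)R)X_P=\mathbf{Q}(x)(RX_P)$ and the cancellation of the basis row vector $\mathbf{Q}(x)$. This is manageable here precisely because triangularity of $R$ and the tridiagonal (banded) form of $X_P$ and $X_Q$ keep every column finite, so all products are bona fide operators on $\ell^0$ and no convergence issues arise; I would make this explicit to keep the argument rigorous.
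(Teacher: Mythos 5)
Your proposal is correct and follows essentially the same route as the paper: part (1) via the degree-graded expansion of $p_n$ in $\{q_0,\dots,q_n\}$ giving a unique upper-triangular $R$ with nonzero diagonal (hence invertible on $\ell^0$), and part (2) by substituting the two three-term recurrences into $x\mathbf{P}(x)=x\mathbf{Q}(x)R$ and cancelling the linearly independent basis $\mathbf{Q}(x)$. Your added remarks on the explicit formula $R_{k,n}=\langle p_n,q_k\rangle_{r\mu}$ and on why the infinite products are well defined on $\ell^0$ are sound elaborations of what the paper leaves implicit.
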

\begin{proof}
The proof is in two parts.
\begin{enumerate}
\item Since both ${\bf P}(x)$ and ${\bf Q}(x)$ are complete orthonormal bases of degree-graded polynomials, the degree-$n$ polynomial of one basis must result from the other as a unique linear combination of its first $n+1$ elements, resulting in a unique upper triangular conversion operator. $R^{-1}$ exists  because all upper triangular operators with nonzero diagonals are invertible on $\ell^0$.
\item This follows from:
\[
x{\bf P}(x) =
\left\{\begin{array}{c} {\bf P}(x)X_P = {\bf Q}(x) RX_P,\\
x{\bf Q}(x) R = {\bf Q}(x) X_QR.
\end{array}\right.
\]
\end{enumerate}
\end{proof}
This result shows that there is an infinite-dimensional similarity transformation between the two  Jacobi matrices (viewed as operators on $\ell^0$):
\[
X_Q = RX_PR^{-1},
\]
and the principal finite section of $X_Q$ may be computed in linear complexity. In particular, it depends only on $X_P$ and the main and first super-diagonals of $R$:
\begin{align*}
(X_Q)_{0,0} & = \frac{R_{0,0}(X_P)_{0,0} + R_{0,1}(X_P)_{1,0}}{R_{0,0}},\\
(X_Q)_{i,i+1} = (X_Q)_{i+1,i} & = \frac{R_{i+1,i+1}(X_P)_{i+1,i}}{R_{i,i}},\\
(X_Q)_{i,i} & = \frac{R_{i,i}(X_P)_{i,i} + R_{i,i+1}(X_P)_{i+1,i} - (X_Q)_{i,i-1}R_{i-1,i}}{R_{i,i}}.
\end{align*}
As a finite-dimensional similarity transformation preserves the spectrum, only the $(n-1)\times(n-1)$ principal finite section of $X_Q$ may be recovered by the $n\times n$ principal finite sections of $R$ and $X_P$ --- an observation of Kautsky and Golub~\cite{Kautsky-Golub-52-439-83} that follows from triangular and tridiagonal matrix multiplication:
\[
P_{n-1}X_QP_{n-1} = (P_{n-1}RP_n) (P_nX_PP_n) (P_n R^{-1}P_{n-1}).
\]
\begin{remark}\label{remark:quasimatrixqr}
In the modified Hilbert space $L^2(\mathbb{R},r\mathrm{d}\mu)$, ${\bf P}(x)$ is not orthonormal (apart from trivial measure modifications); hence, Theorem~\ref{thm:connectioncoeffresult} (1)~may be thought of as a $QR$ factorization in the sense of quasimatrices.
\end{remark}

The main contributions of this paper are outlined in Table~\ref{table:MOPs}, where we relate the computation of the connection coefficient matrix directly to different matrix factorizations in a unified manner.  We also consider approximating more general bounded modifications by rationals (Section~\ref{subsection:irrationalmeasuremodifications}), and when orthogonal polynomials have banded derivative relationships (Section~\ref{section:bandedderivatives}). Some of these factorizations are exactly realisable with finite-dimensional linear algebra (e.g. the Cholesky and $QR$ factorizations), while others are inherently infinite-dimensional (e.g. the reverse Cholesky and $QL$ factorizations), though in practice they can be approximated effectively using truncations (Section \ref{section:infinitealgorithms}). Finally, we present several applications and numerical experiments making use of the results described in this paper (Section \ref{section:applicationsnumexp}).

\begin{table}[htp]
\caption{A collection of connection problems resulting from banded matrix factorizations. }
\begin{center}
\begin{tabular}{c|cc}
$r(x) = u(x)/v(x)$ & Factorization(s) & Connections\\
\hline
\rule{0pt}{3ex}$v(x) = 1$ & $U = R^\top R$ & ${\bf P}(x) = {\bf Q}(x)R$\\
& & $u(x){\bf Q}(x) = {\bf P}(x) R^\top$\\[1ex]
\hline
\rule{0pt}{3ex}$v(x) = 1$, & $\sqrt{U} = QR$ & ${\bf P}(x) = {\bf Q}(x) R$\\
$\sqrt{u(x)}$ is a polynomial && $\sqrt{u(x)}{\bf Q}(x) = {\bf P}(x) Q$\\[1ex]
\hline
\rule{0pt}{3ex}$u(x) = 1$ & $V = L^\top L$ & ${\bf Q}(x) = {\bf P}(x)L^\top$\\
& & $v(x){\bf P}(x) = {\bf Q}(x)L$\\[1ex]
\hline
\rule{0pt}{3ex}$u(x) = 1$ & $\sqrt{V} = QL$ & ${\bf Q}(x) = {\bf P}(x)L^\top$\\
$\sqrt{v(x)}$ is a polynomial & & $\sqrt{v(x)}{\bf P}(x) = {\bf Q}(x)Q^\top$\\[1ex]
\hline
\rule{0pt}{3ex}Rational case 1 & $V = QL$ & ${\bf Q}(x) \RRone = {\bf P}(x)L^\top$\\
& $Q^\top U L^\top = \RRone^\top \RRone$ & $v(x){\bf P}(x) = {\bf Q}(x) \RRone Q^\top$\\[1ex]
\hline
\rule{0pt}{3ex}Rational case 2 & $V = L^\top L$ & ${\bf Q}(x) \RRtwo = {\bf P}(x)L^\top$\\
& $L U L^{-1} = \RRtwo^\top \RRtwo$ & $v(x){\bf P}(x) = {\bf Q}(x) \RRtwo L$\\[1ex]
\hline
\end{tabular}
\end{center}
\label{table:MOPs}
\end{table}

\subsection{Previous work}

Polynomial and rational measure modifications and its connection problem in Theorem~\ref{thm:connectioncoeffresult} are classical problems discussed at length in Gautschi's book on computational orthogonal polynomials \cite{Gautschi-04}. We mention some selected important advances on this problem without any claim to historical completeness. Much of the early development of methods to compute nonclassical orthogonal polynomials was motivated by their applications in quadrature methods.

As early as 1968, it was already known to Mysovskikh~\cite{Mysovskikh-178-1252-68} that the connection coefficients are the upper-triangular Cholesky factor of the Gram matrix, the matrix that corresponds to the left-hand side of our Eq.~\eqref{eq:rationalmodification} in the case of a rational modification. This abstract notion does not require polynomial or rational structure to the modification; and conversely, the sparsity structures present in the more specific modifications are absent in that discussion. The next year, Uvarov~\cite{Uvarov-9-1253-69} discusses the rational measure modification problem, uncovering the banded sparsity in the polynomial connection problem and in a Fourier--Christoffel version of the rational connection problem, $u(x){\bf Q}(x) = {\bf P}(x)C$, where $C$ is a banded matrix with lower bandwidth $\deg(u)$ and upper bandwidth $\deg(v)$. The banded sparsity of $C$ in fact characterizes the generalized Christoffel--Darboux formula, which explains why the modified basis must be multiplied by $u(x)$. While this connection problem has been solved since 1969, there are a few outstanding numerical issues. Firstly, while banded sparsity is important numerically, a general banded matrix such as $C$ must be further factored in order to allow the solution of linear systems. Secondly, Uvarov's method requires the computation of the roots and poles of $r(x)$, which are unstable with respect to perturbations. The method is also encumbered by the use of second-kind functions, whose numerical evaluation is not universally available. As a matter of computational complexity, the cost of computing the first $n$ columns of $C$ is $\mathcal{O}([\deg(u)+\deg(v)]^3n)$, as each column requires the solution of linear systems on the order of the bandwidth. We will note that Uvarov's Fourier--Christoffel connection problem may be reconstructed from either of lines (5) or (6) in Table~\ref{table:MOPs} as, for example, $C = UL^\top \RRone^{-1}$. While our table of factorizations may appear more complicated, the factors are all orthogonal or triangular, hence directly applicable and invertible.

By 1970, the work of Mysovskikh and Uvarov had begun disseminating in the West, though incompletely, with Gautschi~\cite{Gautschi-24-245-70} using the Cholesky factor of the Gram matrix to compute the modified Jacobi matrices for the purposes of modified Gaussian quadrature. Kumar~\cite{Kumar-28-769-74} independently proves Uvarov's Lemma~1, that reciprocal linear polynomial measure modifications connect the modified polynomials to the original polynomials by coefficients that are computed by second-kind functions. He uses this to compute modified first, second, and third kind Chebyshev quadrature rules, as the second-kind Chebyshev functions are particularly easy to compute. Price~\cite{Price-16-999-79} generalizes the work of Kumar to a general rational measure modification. In the case of a reciprocal polynomial measure modification, his Theorems 1 \& 2 uncover the banded sparsity in this special case of the connection problem; see line (3) in Table~\ref{table:MOPs}. This method corresponds to an upper-lower factorization of $v(X_P)$, though it relies on second-kind functions to compute the first row/column of the factors to allow such a factorization to proceed directly. Since Price's method relies on monomial moments, it is not suitable for large degree problems. In 1992, Skrzipek~\cite{Skrzipek-41-331-92} builds on Price's method by replacing the computation of monomial moments and second-kind functions with Gaussian quadrature for the starting columns in the upper-lower factorization of $v(X_P)$. This improves the stability of the method, but results in a computational complexity that depends on the degree of the quadrature rule. Inevitably, this may become quite large when the poles of the rational modification come close to the support of $\mu$.

It is well-known that Cholesky factorization is numerically stable, though the error may be large for an ill-conditioned matrix. In the context of polynomial modifications, if the roots of $u(x)$ are near  the support of $\mu$, the condition number of $U$ will be large. Kautsky and Golub~\cite{Kautsky-Golub-52-439-83} observed this phenomenon in the computation of $X_Q$ by the similarity transformation implied by Theorem~\ref{thm:connectioncoeffresult}, and provided two alternative algorithms that maintain a much higher accuracy for the computation of the modified Jacobi matrix. The first requires an orthogonal diagonalization of a principal finite section of $X_P$, while the second requires the polynomial $u(x)$ to be factored in terms of its roots, which allows Cholesky and $QR$ factorizations of successive modifications to take place. In related work, Buhmann and Iserles~\cite{Buhmann-Iserles-43-117-92} use the unshifted infinite-dimensional $QR$ algorithm to compute the $r(x) = x^2$ measure modification and draw interesting connections with certain Sobolev orthogonal polynomials. Elhay and Kautsky~\cite{Elhay-Kautsky-6-205-94} develop an inverse Cholesky method for a reciprocal degree polynomial modification of degree at most $2$, where a finite-rank correction based on second-kind function evaluation is added to the top of $V$ to allow the upper-lower factorization to proceed from the top down, that is, directly. Combined with a partial fraction decomposition of $r(x)$ and the summation technique of Elhay, Golub and Kautsky~\cite{Elhay-Golub-Kautsky-32-143-92}, the method enables a general rational weight modification. Elhay and Kautsky compare their method to Gautschi's method~\cite{Gautschi-36-547-81} of minimal solutions of certain recurrence relations and find the performance of methods complimentary: where one is poor the other is excellent, and vice versa.

Finally, we also mention that aside from the Gram--Schmidt type approaches which constitute the primary methods referenced in the literature and the secondary line of research projects on factorization methods outlined above, there have also been tertiary approaches to computing non-classical orthogonal polynomials independent of these two. An example of such a method is to solve the corresponding Riemann--Hilbert problem for the modified orthogonal polynomials as discussed in~\cite{Trogdon-Olver-36-174-16}.

\subsection{Infinite-dimensional matrix factorizations}

In finite dimensions, matrix factorizations are a powerful collection of tools for the solution of various ubiquitous types of problems. Some commonly used factorizations include the $LU$, $QR$, Cholesky, and Schur factorizations among many others~\cite{Golub-Van-Loan-13,Trefethen-Bau-97}. Depending on the specific needs of a particular application, there also may exist different algorithms for these factorizations favouring either speed or accuracy and allowing different levels of parallelization, a typical example being the contrast of Gram--Schmidt and Householder-based $QR$ factorization.

Cholesky factorization of a symmetric positive-definite square matrix $M = M^\top\in\mathbb{R}^{n\times n}$ represents $M$ as the unique product $R^\top R$, with an upper-triangular matrix $R$ with positive diagonal entries. Reverse Cholesky factorization produces an upper-lower factorization of the form $M = L^\top L$. $QR$ is one of the most well-studied and widely used matrix factorizations, in particular in the form of so-called thin or reduced $QR$ factorization, which allows the representation of a matrix $M\in\mathbb{R}^{m\times n}$ in terms of a product of a matrix $Q\in\mathbb{R}^{m\times n}$ with orthonormal columns and an upper-triangular matrix $R\in\mathbb{R}^{n\times n}$. 
This factorization is unique up to a {\em phase}, or a diagonal matrix $D\in\mathbb{R}^{n\times n}$ with entries $\pm 1$, since $QR = QDDR$ and the matrices $QD$ and $DR$ are respectively also orthogonal and upper-triangular.
The $QL$ factorization is defined analogously, where the matrix $L$ now is lower-triangular. For orthogonal-triangular matrix factorizations, we shall impose uniqueness by adopting the {\em positive phase} convention in $L$ and $R$, whereby $L_{i,i} > 0$ and $R_{i,i}>0$, $\forall i$.

In this work, we shall find it convenient to work with the infinite-dimensional analogues of Cholesky, reverse Cholesky, $QR$ and $QL$ factorizations. In a computing environment, one can represent infinite-dimensional linear operators as cached adaptive arrays~\cite{Olver-Townsend-57-14}, and this is facilitated and simplified in the case of banded operators. There is a distinction between Cholesky and $QR$ on the one hand and reverse Cholesky and $QL$ on the other, where finite sections of the former are {\em computable} (that is, with a finite number of operations), while finite sections of the latter are {\em not}. By perturbative arguments, we shall show in \S \ref{section:infinitealgorithms} when we expect (larger) finite section methods to approximate the true finite sections of the non-computable matrix factorizations.

If $A:\ell^2\to\ell^2$ is a bounded invertible linear operator, then $A = QR$, where $Q$ is not only an isometry\footnote{We use $\top$ to denote the adjoint operator, to emphasise that our function spaces are over the reals.}, $Q^\top Q = I$, but also orthogonal, $QQ^\top = I$, and $R$ is upper-triangular, bounded, and invertible. We refer the reader to~\cite{Deift-Li-Tomei-64-358-85,Hansen-254-2092-08,Colbrook-Hansen-143-17-19} for further details. If $A$ is banded and unbounded, then a $QR$ factorization still exists with an isometric $Q$ because it is independent of a diagonal column scaling, say $D$, such that $A D:\ell^2\to\ell^2$. If $A$ has dense column space, then $Q$ is also orthogonal as $Q$ and $A$ must have the same range, a fact that was used in~\cite{Aurentz-Slevinsky-410-109383-20}. Moreover, the diagonal column scaling shows that the upper-triangular operator $R$ is unbounded with the same growth as $A$.

Cholesky and reverse Cholesky factorizations may be extended analogously to the infinite-dimensional case, but there is a subtlety to the $QL$ factorization. For $QL$ to exist with orthogonal $Q$, it is sufficient\footnote{For non-invertible $A$ it is unclear under what conditions a $QL$ factorization exists or indeed is unique.} that $A$ be invertible~\cite{Webb-Thesis-17}.

\subsection{Contemporary Applications}

Almost all previous work mentions modified Gaussian quadrature as the archetypal application. But there are numerous reasons to be interested in modified classical orthogonal polynomials. Our interest can be divided into two classes of problems. Firstly, we are interested in the orthogonal structure in and on algebraic curves. Recent advances in this direction include a description of the orthogonal structures on quadratic~\cite{Olver-Xu-41-206-21}, planar cubic~\cite{Fasondini-Olver-Xu-21}, and a class of planar algebraic curves~\cite{Fasondini-Olver-Xu-151-369-23} and quadratic surfaces of revolution~\cite{Olver-Xu-89-2847-20}. Secondly, we are interested in the Koornwinder constructions of bivariate (and multivariate) orthogonal polynomials that are semi-classical. Orthogonal polynomials on half-disks and trapeziums are described in~\cite{Snowball-Olver-145-3-20} and on spherical caps in~\cite{Snowball-Olver-5-1-21} and we briefly introduce two more classes of polynomials on an annulus and a spherical band in \S\ref{subsection:annulus} and \S\ref{subsection:sphericalband}, respectively. Orthogonal polynomials with nonclassical weights can further be used for computing random matrix statistics for general invariant ensembles~\cite{Deift-99}, and in-particular, sampling their eigenvalues corresponds to a sequence of rational modifications of the measure~\cite{Olver-Nadakuditi-Trogdon-4-155002-15}.

\section{Rational measure modification via infinite-dimensional matrix factorizations}

In what follows, we denote the synthesis operator:
\[
{\cal S}:\ell^2\to L^2(\bbR,\dmu),~\bs{f}\mapsto {\bf P} \bs{f} = f,
\]
and the analysis operator:
\[
{\cal A}:L^2(\bbR, \dmu)\to\ell^2,~f\mapsto\int_\bbR  {\bf P}(x)^\top f(x) \dmu(x) = \bs{f}.
\]
For computational purposes, it is well-known that both operators have discrete analogues~\cite{Olver-Slevinsky-Townsend-29-573-20}.

Note that any bounded invertible operator has $QR$ and $QL$ factorizations and any symmetric positive definite operator has a Cholesky factorization.

\begin{proposition}[\cite{Deift-Li-Tomei-64-358-85,Colbrook-Hansen-143-17-19,Hansen-254-2092-08}]
If $A:\ell^2\to\ell^2$ is a bounded invertible  operator  then the $QR$ factorization  ($A = QR$ for $Q$ orthogonal and $R$ is invertible and bounded upper triangular) exists.  If $A$ is not invertible then $Q$ is only guaranteed to be an isometry and $R$ may not be invertible. 
\end{proposition}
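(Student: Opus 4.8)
The plan is to reduce the existence of the $QR$ factorization to the Cholesky factorization of the positive-definite operator $A^\top A$, and then to upgrade the resulting isometry to an orthogonal operator using the surjectivity of $A$. First I would form $M := A^\top A$ and observe that it is bounded and self-adjoint, and moreover \emph{coercive}: for every $\bs{x}\in\ell^2$,
\[
\langle M\bs{x},\bs{x}\rangle_{\ell^2} = \|A\bs{x}\|_{\ell^2}^2 \ge \|A^{-1}\|^{-2}\,\|\bs{x}\|_{\ell^2}^2,
\]
since $A$ is invertible with bounded inverse. Hence $M$ is symmetric positive definite with bounded inverse $M^{-1} = A^{-1}A^{-\top}$, and the infinite-dimensional Cholesky factorization (available for symmetric positive-definite operators, as noted above) yields $M = R^\top R$ with $R$ upper triangular, bounded (indeed $\|R\|\le\|A\|$, from $\|R\bs{x}\|^2 = \langle M\bs{x},\bs{x}\rangle$), of positive diagonal, and boundedly invertible. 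The bounded invertibility of $R$ is the one Cholesky-level fact I would need to pin down; it follows from coercivity, which forces the finite-section Cholesky factors to have uniformly bounded inverses, and these patch together to give a bounded $R^{-1}$ on account of the triangular (nested) structure.

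Next I would define $Q := A R^{-1}$, which is well defined and bounded because $R^{-1}$ is bounded, and verify directly that $Q$ is an isometry:
\[
Q^\top Q = R^{-\top} A^\top A R^{-1} = R^{-\top}(R^\top R)R^{-1} = I,
\]
so that $A = QR$ with $R$ bounded, invertible, and upper triangular. This construction already delivers the weaker second assertion: if $A$ is not invertible then $M = A^\top A$ is merely positive semidefinite, its Cholesky factor $R$ need not be invertible, and the same formal computation produces at best an isometry $Q$ whose range is $\overline{\operatorname{ran}(A)}$ rather than all of $\ell^2$.

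The main obstacle, and the sole point at which the full force of invertibility is used, is upgrading $Q$ from an isometry to an \emph{orthogonal} operator, i.e. establishing $QQ^\top = I$; in infinite dimensions this is not automatic, as the unilateral shift shows. Here I would argue that $Q = AR^{-1}$ is a composition of bounded bijections --- $A$ by hypothesis and $R^{-1}$ by the Cholesky step --- hence itself a bounded bijection. A bijective isometry is invertible with $Q^{-1}=Q^\top$: from $Q^\top Q = I$ one gets $Q^\top = (Q^\top Q)Q^{-1} = Q^{-1}$, whence $QQ^\top = QQ^{-1} = I$. Thus $Q$ is orthogonal and the factorization is complete.

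Finally, I would remark that this $Q$ and $R$ coincide with the output of Gram--Schmidt orthonormalization applied to the columns of $A$: injectivity of $A$ makes those columns linearly independent so the process is well defined, while surjectivity of $A$ makes the span of the columns dense and hence the resulting orthonormal system complete --- the concrete counterpart of the abstract surjectivity argument above. Uniqueness up to a phase, and uniqueness outright under the positive-diagonal convention imposed on $R$, then follows from uniqueness of the Cholesky factor of $M$.
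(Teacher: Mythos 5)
The paper offers no proof of this proposition at all---it is imported from the literature (Deift--Li--Tomei, Hansen, Colbrook--Hansen), where the construction is typically carried out by Gram--Schmidt or Householder-type orthogonalization of the columns of $A$. Your route---Cholesky-factor the coercive operator $A^\top A = R^\top R$, set $Q = AR^{-1}$, and use bijectivity of $A$ and $R$ to promote the isometry $Q$ to an orthogonal operator---is a correct and genuinely different derivation for the invertible case. Each step checks out: coercivity $\langle A^\top A\bs{x},\bs{x}\rangle \ge \|A^{-1}\|^{-2}\|\bs{x}\|^2$ gives strict positive definiteness, the nested finite-section Cholesky factors have uniformly bounded inverses so $R^{-1}$ is bounded on $\ell^0$ and extends to $\ell^2$, and the observation that a surjective isometry satisfies $QQ^\top = I$ (with the unilateral shift correctly cited as the obstruction in general) is exactly the point where invertibility is indispensable. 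What your approach buys is coherence with the rest of the paper: it reduces the $QR$ proposition to the Cholesky proposition stated a few lines later (note the ordering---the Cholesky result appears \emph{after} this one, so ``as noted above'' should really be a forward reference), and it makes the relation $A^\top A = R^\top R$, which the paper exploits repeatedly in Table~\ref{table:MOPs}, the engine of the proof rather than a consequence.

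The one genuine soft spot is the second sentence of the proposition. When $A$ is not invertible, $A^\top A$ is merely positive semi-definite, its Cholesky factor $R$ need not be invertible, and the formula $Q = AR^{-1}$ is then simply undefined---so your construction does not establish that a factorization with isometric $Q$ \emph{exists} in that case, only that if one exists it cannot be expected to do better than an isometry. Establishing existence for non-invertible $A$ requires a different device (column-by-column Gram--Schmidt with a convention for dependent columns, or a limiting/Householder argument as in the cited references). Since the claim of the proposition in that regime is itself only a caveat, this is a minor omission, but it should be acknowledged rather than folded into ``the same formal computation.''
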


\begin{proposition}[\cite{Webb-Thesis-17}\footnote{There is an issue in Webb's proof for the non-invertible case.}]\label{prop:QL}%
If $A:\ell^2\to\ell^2$ has a bounded inverse, then the $QL$ factorization ($A = QL$ for $Q$ orthogonal and $L$ lower triangular) exists.
\end{proposition}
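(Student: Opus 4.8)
The plan is to avoid building $L$ from the ground up (which is impossible in the $QL$ case, since its leading column depends on the whole operator) and instead to reduce the claim to the $QR$ factorization proposition stated immediately above, which already supplies all the analytic content. Since $A$ is bounded with bounded inverse, so is $A^\top$, and hence so is $B := (A^\top)^{-1} = (A^{-1})^\top$. Applying the $QR$ result to the bounded invertible operator $B$ yields $B = QR$ with $Q$ orthogonal and $R$ bounded, invertible, and upper triangular.

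Next I would simply unwind the algebra. From $B = QR$ I get $A^\top = B^{-1} = R^{-1}Q^{-1} = R^{-1}Q^\top$, using $Q^{-1} = Q^\top$. Taking adjoints, $A = (A^\top)^\top = Q\,(R^{-1})^\top =: QL$, where $L := (R^{-1})^\top$. The orthogonal factor is literally the one handed back by the $QR$ proposition, so nothing further is needed there, and $L$ is bounded and invertible as the adjoint of the inverse of a bounded invertible operator. The only thing left to verify is that $L$ is lower triangular, equivalently that $R^{-1}$ is upper triangular.

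I expect this last point to be the genuine crux, because everything preceding it is purely formal operator algebra whereas here the infinite-dimensionality must be handled honestly: one must know that the algebra of bounded upper-triangular operators is inverse-closed. I would argue this directly from invariance of the finite initial subspaces. An upper-triangular $R$ satisfies $R\,\mathrm{range}(P_m) \subseteq \mathrm{range}(P_m)$ for every $m$, since each $R\bs{e}_k$ is supported on coordinates $0,\ldots,k$. Because $R$ is injective, its restriction to the finite-dimensional space $\mathrm{range}(P_m)$ is an injective endomorphism, hence a bijection of $\mathrm{range}(P_m)$; therefore $R^{-1}\,\mathrm{range}(P_m) = \mathrm{range}(P_m)$ as well, so $R^{-1}$ is upper triangular and $L = (R^{-1})^\top$ is lower triangular, completing the factorization $A = QL$. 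It is worth noting that this existence argument is necessarily non-constructive in the top-down sense, consistent with the earlier remark that finite sections of the $QL$ factors are not computable: it routes through $(A^\top)^{-1}$ and its $QR$ factorization rather than through any finite truncation. An alternative route would be to set $M := A^\top A$ (bounded, self-adjoint, and bounded below), establish a reverse Cholesky factorization $M = L^\top L$ with $L$ lower triangular, and put $Q := AL^{-1}$; but proving existence of the reverse Cholesky factor is at least as hard as the obstacle above, so I would prefer the $QR$-based reduction.
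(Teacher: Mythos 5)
The paper does not actually supply a proof of this proposition --- it is quoted from Webb's thesis, with only a footnote flagging a gap in the non-invertible case --- so the relevant question is whether your self-contained argument stands on its own, and it does. The reduction $A = \bigl((A^\top)^{-1}\bigr)^{-\top}$, followed by the $QR$ factorization $(A^\top)^{-1} = QR$ of a bounded invertible operator and the identification $L = (R^{-1})^\top$, is correct, and the crux you isolate is the right one: you must know that the inverse of a bounded invertible upper-triangular operator on $\ell^2$ is again upper triangular, which is \emph{not} formal and fails for weaker notions of triangularity. Your invariant-subspace argument settles it cleanly: $R$ maps each finite-dimensional space $\mathrm{range}(P_m)$ into itself, injectivity of $R$ makes the restriction a bijection of $\mathrm{range}(P_m)$, hence $R^{-1}$ preserves the same flag and is upper triangular; note in passing that this also forces $R_{i,i}\neq 0$ for all $i$, and under the positive-phase convention $L_{i,i} = 1/R_{i,i} > 0$, so the convention transfers to the $QL$ factors. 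Two remarks on what this buys relative to the cited source. First, your proof manifestly covers only the boundedly invertible case, which is precisely the hypothesis of the proposition and sidesteps the defect in Webb's treatment of non-invertible $A$; it gives no information when $A$ is merely injective, since then $(A^\top)^{-1}$ is unavailable. Second, your closing observation is apt: the argument is non-constructive in exactly the sense the paper needs it to be, since it routes through the global object $(A^\top)^{-1}$ rather than through finite sections, which is consistent with the paper's later point that finite sections of $Q$ and $L$ are not computable and must instead be approximated perturbatively as in Section~\ref{section:infinitealgorithms}. The alternative route you mention via $A^\top A = L^\top L$ and $Q := AL^{-1}$ is essentially Corollary~\ref{cor:LtL} in disguise and would work too, but as you say it merely relocates the same difficulty, so the $QR$-based reduction is the better choice.
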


In the infinite-dimensional setting there are two possible definitions for positive definite (which are equivalent in finite-dimensions) as outlined in the following definition:

\begin{definition}
A linear operator $A:\ell^2\to\ell^2$ is:
\begin{enumerate}
\item {\em positive semi-definite} if $\langle\bs{v}, A \bs{v}\rangle \ge 0$, for all $\bs{v} \in \ell^2$;
\item {\em positive definite} if $\langle\bs{v}, A \bs{v}\rangle > 0$, for all nonzero $\bs{v} \in \ell^2$; and,
\item {\em strictly positive definite} if $\exists M>0$ such that $\langle\bs{v}, A \bs{v}\rangle \ge M\langle\bs{v}, \bs{v}\rangle$, for all $\bs{v} \in \ell^2$.
\end{enumerate}
\end{definition}

\begin{proposition}[\cite{Chui-Ward-Smith-5-1-82,Goodman-et-al-35-233-95,Goodman-et-al-18-331-98}]
Let $A:\ell^2\to\ell^2$ be a self-adjoint linear operator with dense domain in $\ell^2$. If $A = R^\top R$ with upper-triangular $R$, then:
\begin{enumerate}
\item $A$ is positive semi-definite;
\item $R$ is unique and invertible on $\ell^0$ only if $A$ is positive definite; and,
\item $R$ is unique and invertible on $\ell^2$ only if $A$ is strictly positive definite.
\end{enumerate}
\end{proposition}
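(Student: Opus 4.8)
The plan is to reduce all three necessity claims to a single quadratic-form identity and then read off the positivity from the structural properties imposed on $R$. The unifying observation is that whenever $A = R^\top R$, for every $\bs{v}$ in the relevant domain one has
\[
\langle \bs{v}, A\bs{v}\rangle = \langle \bs{v}, R^\top R \bs{v}\rangle = \langle R\bs{v}, R\bs{v}\rangle = \|R\bs{v}\|_{\ell^2}^2 \ge 0.
\]
I would first establish this identity on $\ell^0$, where $R$ (upper triangular) and $R^\top$ act entrywise and the middle equality is just the definition of the transpose, and then use the density of $\ell^0$ together with the self-adjointness of $A$ to transfer the resulting inequalities to the space on which each notion of positivity is tested. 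Part (1) is then immediate: the form $\bs{v}\mapsto\langle\bs{v},A\bs{v}\rangle$ is a squared norm, hence nonnegative, so $A$ is positive semi-definite.

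For part (2), I would combine the identity with the elementary fact that $\|R\bs{v}\|^2 = 0$ if and only if $R\bs{v}=0$. If $R$ is invertible on $\ell^0$ then it is injective there --- equivalently, by the triangular structure noted earlier, every diagonal entry $R_{n,n}$ is nonzero --- so $R\bs{v}=0$ forces $\bs{v}=0$. Hence $\langle\bs{v},A\bs{v}\rangle = \|R\bs{v}\|^2 > 0$ for all nonzero $\bs{v}$, i.e. $A$ is positive definite. (Uniqueness of $R$ is not actually needed for this implication; it belongs to the converse direction via the positive-phase normalisation.) For part (3), the key is that positive definiteness alone supplies no \emph{uniform} lower bound; one needs $R^{-1}$ to be bounded, and invertibility on $\ell^2$ furnishes exactly this. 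From $\bs{v} = R^{-1}(R\bs{v})$ one gets $\|\bs{v}\| \le \|R^{-1}\|\,\|R\bs{v}\|$, whence
\[
\langle\bs{v},A\bs{v}\rangle = \|R\bs{v}\|^2 \ge \|R^{-1}\|^{-2}\|\bs{v}\|^2,
\]
so $A$ is strictly positive definite with constant $M = \|R^{-1}\|^{-2}>0$.

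The main obstacle is not the algebra, which is the one-line identity above, but keeping the functional-analytic bookkeeping honest. Two points need care. First, since $A$ is only densely defined and $R$ may be unbounded, I must justify that the identity and ensuing inequalities hold on the correct domains and extend by density to the space over which positivity is quantified; that $R:\ell^0\to\ell^0$, so $\ell^0$ serves as a form core, together with the self-adjointness of $A$, is what makes this transfer legitimate. Second, and most importantly, the distinction between parts (2) and (3) rests entirely on separating purely algebraic invertibility on $\ell^0$ (nonzero diagonal, giving injectivity but possibly an unbounded inverse) from topological invertibility on $\ell^2$ (a genuinely bounded inverse): only the latter yields the coercivity constant $M$, and without it one can have $\inf_{\|\bs{v}\|=1}\langle\bs{v},A\bs{v}\rangle = 0$ while the form remains strictly positive. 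Verifying that $\|R^{-1}\|$ is finite precisely when ``invertible on $\ell^2$'' holds is the crux that keeps the three cases genuinely distinct. The converse implications --- that each level of positivity actually produces a factor $R$ with the stated invertibility and uniqueness --- are the substantive content of the cited works and are not reproved here.
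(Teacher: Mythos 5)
First, a point of comparison: the paper offers no proof of this proposition at all --- it is imported verbatim from the cited references --- and the direction the paper actually relies on later (in the Lemma following Proposition~\ref{prop:rposdef}: ``$r(X_P)$ is \ldots positive definite hence its Cholesky factorization exists and $R$ is invertible on $\ell^0$'') is the \emph{sufficiency} direction, positivity of $A$ implying existence, uniqueness and invertibility of $R$. You read ``only if'' literally and prove only necessity, explicitly deferring the converse to the references; that is a defensible reading of the wording, but it means you have proved the direction the paper never uses and skipped the one it does.

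Second, and more seriously, your argument for part (2) has a genuine gap, and the step that fails is exactly the one you label elementary. Invertibility of $R$ on $\ell^0$ (equivalently, nonzero diagonal entries) gives injectivity of $R$ on \emph{finite} sequences only: back-substitution for an upper-triangular system starts from the last nonzero entry, and a general $\ell^2$ vector has none. The paper's definition of positive definite quantifies over all nonzero $\bs{v}\in\ell^2$, so you need $R\bs{v}=0\Rightarrow\bs{v}=0$ there, and for upper-triangular $R$ this is false in general. Concretely, take $R = I - cS$ with $c>1$ and $(S\bs{v})_n = v_{n+1}$, so $R_{n,n}=1$ and $R_{n,n+1}=-c$. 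Then $R$ is upper-triangular, invertible on $\ell^0$, and is the unique positive-diagonal upper-triangular factor of the bounded self-adjoint operator $A = R^\top R$ (the entrywise Cholesky recursion is forced and yields pivots $R_{0,0}^2=1$ and $R_{k,k}^2=(1+c^2)-c^2=1$); yet $\bs{v} = (1, c^{-1}, c^{-2},\ldots)\in\ell^2$ satisfies $R\bs{v}=0$, so $\langle\bs{v},A\bs{v}\rangle=0$ and $A$ is not positive definite. Thus the necessity claim in (2), as you (and the literal statement) read it, is not merely unproved by your argument --- it is false, and the hypothesis of uniqueness does not rescue it. Parts (1) and (3) of your sketch are sound (for (3), bounded invertibility on $\ell^2$ does yield the coercivity constant $\|R^{-1}\|^{-2}$), but the correct and load-bearing content of item (2) lives in the converse direction, which is the one a complete proof would need to supply.
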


\begin{corollary}\label{cor:LtL}%
Let $A:\ell^2\to\ell^2$ be a self-adjoint linear operator with dense domain in $\ell^2$. If $A$ is invertible and $A = L^\top L$ with lower-triangular $L$, then:
\begin{enumerate}
\item $A^{-1}$ is positive semi-definite;
\item $L$ is unique and invertible on $\ell^0$ only if $A^{-1}$ is positive definite; and,
\item $L$ is unique and invertible on $\ell^2$ only if $A^{-1}$ is strictly positive definite.
\end{enumerate}
\end{corollary}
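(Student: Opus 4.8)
The plan is to reduce the statement to the preceding proposition on Cholesky factorizations by passing to the inverse operator. The key algebraic observation is that a reverse Cholesky factorization $A = L^\top L$ with lower-triangular $L$ is equivalent to an ordinary upper-triangular Cholesky factorization of $A^{-1}$. Indeed, since $A$ is invertible, any factor $L$ with $A = L^\top L$ must itself be invertible, so setting $M := (L^\top)^{-1} = (L^{-1})^\top$ produces an upper-triangular operator with
\[
M^\top M = L^{-1}(L^\top)^{-1} = (L^\top L)^{-1} = A^{-1}.
\]
Conversely $L = (M^\top)^{-1}$ recovers the lower-triangular factor from any upper-triangular Cholesky factor $M$ of $A^{-1}$, and the two assignments are mutually inverse. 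Because $M^{-1} = L^\top$ and the diagonal entries satisfy $M_{i,i} = 1/L_{i,i}$, this bijection carries lower-triangular operators with positive diagonal to upper-triangular operators with positive diagonal, so the positive-phase convention — and hence the uniqueness normalization — is preserved on both sides.

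With this correspondence in hand, I would first verify that $A^{-1}$ is an admissible input to the preceding proposition: since $A$ is self-adjoint and invertible, $A^{-1}$ is a bounded self-adjoint operator defined on all of $\ell^2$, so in particular it has dense domain. Applying that proposition to $A^{-1} = M^\top M$ then yields its three graded conclusions phrased for $A^{-1}$, namely that $M$ exists only if $A^{-1}$ is positive semi-definite; that $M$ is unique and invertible on $\ell^0$ only if $A^{-1}$ is positive definite; and that $M$ is unique and invertible on $\ell^2$ only if $A^{-1}$ is strictly positive definite.

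The final step is to transport these three conclusions back to $L$ through the bijection $L \mapsto M$. Existence of $L$ and of $M$ are equivalent by construction, which gives item (1). For items (2) and (3) I would invoke that triangular operators with nonzero diagonal are invertible on $\ell^0$ (as already noted in the excerpt), together with $L^{-1} = M^\top$, so that $\ell^0$-invertibility of $L$ matches $\ell^0$-invertibility of $M$; and boundedness of $M^{-1} = L^\top$ matches boundedness of $L^{-1} = M^\top$, yielding the $\ell^2$ statement. Uniqueness transfers in all cases because the assignment is a bijection respecting the positive-phase normalization.

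The main obstacle I expect is not the algebra but the operator-theoretic bookkeeping in the unbounded case: $A$ (and hence $L$) may be unbounded even though $A^{-1}$ and $M$ are bounded, so I must interpret ``invertible'' consistently — a bounded two-sided inverse for the $\ell^2$ statement, and an $\ell^0 \to \ell^0$ inverse for the $\ell^0$ statement — and check that the identities $M = (L^\top)^{-1}$ and $M^\top M = A^{-1}$ hold on the appropriate domains rather than merely formally. Confirming that $A = L^\top L$ with $A$ invertible forces $L$ to be invertible in the relevant sense, so that $M$ is genuinely well-defined, is the one point that requires more than a one-line justification.
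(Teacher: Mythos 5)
Your proposal is correct and is precisely the argument the paper intends: the corollary is stated without proof as an immediate consequence of the preceding proposition, obtained by passing from $A = L^\top L$ to the upper-triangular Cholesky factorization $A^{-1} = M^\top M$ with $M = L^{-\top}$, exactly as you describe. You have also correctly isolated the only delicate point (that invertibility of $A$ forces invertibility of $L$ in the appropriate sense, so that $M$ is well defined), which the paper likewise leaves implicit.
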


As occurs in unbounded domains, Jacobi matrices may be unbounded linear operators. Thus, our analytical framework for functional calculus must include bounded and unbounded functions of bounded and unbounded operators. Thus we require the Borel functional calculus, which will allow us to define functions of the Jacobi matrix through the projection-valued measure, $\dmu_{\bf P}(x) = {\bf P}(x)^\top {\bf P}(x)\dmu(x)$. We refer the interested reader to Reed and Simon~\cite[\S VIII.3]{Reed-Simon-80} for further details.

\begin{definition}[p.~263 in \cite{Reed-Simon-80}]
Let $r$ be a measurable function with finite $\mu$-moments:
\[
-\infty<\int_\bbR x^n r(x)\dmu(x) < \infty,\quad\forall n\in\mathbb{N}_0.
\]
We define:
\[
r(X_P) := \int_\bbR r(x) \dmu_{\bf P}(x).
\]
\end{definition}

If $r$ is a polynomial, then $r(X_P)$ is also realized by operator composition and the Clenshaw algorithm, as alluded to earlier, and if $r(x) = \frac{u(x)}{v(x)}$ is a rational function, then it follows from $\mathbf{P}(x) r(X_P) = r(x) \mathbf{P}(x) = \frac{u(x)}{v(x)} \mathbf{P}(x)$ that:
\[
r(X_P) = \underbrace{u(X_P) v(X_P)^{-1}}_{UV^{-1}} = \underbrace{v(X_P)^{-1} u(X_P)}_{ V^{-1} U}.
\]
\begin{proposition}\label{prop:rposdef}
If $r\dmu$ is a positive Borel measure with finite moments, as in Eq.~\eqref{eq:finitemoments}, then $r(X_P)$ is symmetric positive definite.
\end{proposition}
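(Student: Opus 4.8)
The plan is to evaluate the quadratic form $\langle\bs{v}, r(X_P)\bs{v}\rangle$ directly through the Borel functional calculus and to recognize it as the squared $L^2(\bbR, r\dmu)$-norm of the function synthesized from $\bs{v}$. Fix a nonzero $\bs{v}\in\ell^2$ (more precisely, in the form domain of $r(X_P)$, which contains $\ell^0$) and set $f=\mathcal{S}\bs{v}={\bf P}\bs{v}\in L^2(\bbR,\dmu)$. By definition $r(X_P)=\int_\bbR r(x)\,\dmu_{\bf P}(x)$ with $\dmu_{\bf P}(x)={\bf P}(x)^\top{\bf P}(x)\dmu(x)$, so the scalar spectral measure attached to $\bs{v}$ is
\[
\mathrm{d}\langle\bs{v},\mu_{\bf P}(x)\bs{v}\rangle=\langle\bs{v},{\bf P}(x)^\top{\bf P}(x)\bs{v}\rangle\dmu(x)=|{\bf P}(x)\bs{v}|^2\dmu(x)=|f(x)|^2\dmu(x).
\]
Integrating against $r$ and invoking the spectral theorem yields the key identity
\[
\langle\bs{v}, r(X_P)\bs{v}\rangle=\int_\bbR r(x)\,|f(x)|^2\dmu(x)=\|f\|_{r\dmu}^2,
\]
which is the heart of the argument.

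With this identity in hand, positivity follows in two stages. Since $r\dmu$ is by hypothesis a positive Borel measure, the right-hand side is $\ge 0$, giving positive semi-definiteness at once. For strictness I use that $\mathcal{S}$ is an isometric isomorphism onto $L^2(\bbR,\dmu)$: a nonzero $\bs{v}$ forces $f\ne 0$ in $L^2(\bbR,\dmu)$, hence $f$ is nonzero on a set $S\subseteq\supp(\mu)$ of positive $\mu$-measure. Because $r$ is nonnegative and nonsingular on $\supp(\mu)$, i.e.\ $r>0$ $\mu$-almost everywhere there, the integrand $|f|^2 r$ is strictly positive on $S$ up to a $\mu$-null set, whence $\int_\bbR|f|^2 r\dmu>0$. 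Thus $\langle\bs{v}, r(X_P)\bs{v}\rangle>0$ for every nonzero $\bs{v}$, which is exactly the claimed positive definiteness.

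The main obstacle is not any inequality but the rigorous justification of the spectral identity and of the strictness step. For the identity one must confirm that $\dmu_{\bf P}$ is precisely the projection-valued measure furnished by the spectral theorem for the possibly unbounded self-adjoint operator $X_P$, and that the finite $\mu$-moment hypothesis on $r$ makes $\int_\bbR r\,|f|^2\dmu$ convergent and handles the form domain when $r(X_P)$ is unbounded (as happens for a polynomial $u$ on an unbounded support). Restricting to $\bs{v}\in\ell^0$, where $f$ is a polynomial and all integrals are finite by the moment hypothesis, sidesteps these domain technicalities and suffices by density. For strictness one must rule out $f$ concentrating exactly on the $\mu$-null zero set of $r$; this is guaranteed by the standing assumption that $r$ is nonnegative and nonsingular on $\supp(\mu)$ together with the fact that $r\dmu$ inherits the infinite-support and finite-moment conditions of \eqref{eq:finitemoments}, so that $\mu$ and $r\dmu$ are mutually absolutely continuous on $\supp(\mu)$.
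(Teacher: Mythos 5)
Your proposal is correct and follows essentially the same route as the paper: both identify $\langle\bs{v},r(X_P)\bs{v}\rangle$ with $\int_\bbR f(x)^2 r(x)\dmu(x)$ for $f={\bf P}(x)\bs{v}$ via the projection-valued measure $\dmu_{\bf P}$, and conclude strict positivity from the nonvanishing of $f$ and the positivity of $r\dmu$. The paper states this in a single line; your additional remarks on form domains and on why $f$ cannot concentrate on the zero set of $r$ are sound elaborations of the same argument rather than a different method.
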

\begin{proof}
Symmetry of $r(X_P)$ is a direct consequence of the orthonormality of $\mathbf{P}(x)$. To show positive definiteness we observe that for any nonzero $\bs{v} \in \ell^2$ we have a nonzero $v(x) = {\bf P}(x) \bs{v}\in L^2(\bbR, \dmu)$:
\[
\langle\bs{v}, r(X_P)\bs{v}\rangle = \int_\bbR v(x)^2 r(x) \dmu(x)>0.
\]
See also~\cite[Theorem VIII.5 (f)]{Reed-Simon-80}.
\end{proof}

From this last result we can translate the computation of the conversion operator $R$ between ${\bf P}(x)$ and ${\bf Q}(x)$ to computing a Cholesky factorization:

\begin{lemma}
If $r\dmu$ is a positive Borel measure with finite moments, then the Cholesky factorization of $r(X_P)$ encodes the conversion matrix $R$, i.e.:
\begin{equation}\label{eq:rationalmodification}
r(X_P) = R^\top R,
\end{equation}
if and only if ${\bf P}(x) = {\bf Q}(x) R$. Furthermore:
\[
r(x) {\bf Q}(x) = {\bf P}(x) R^\top.
\]
\end{lemma}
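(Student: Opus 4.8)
The plan is to recognize that $r(X_P)$ is nothing but the Gram matrix of the basis ${\bf P}(x)$ measured in the modified inner product. Unwinding the Borel functional calculus definition through the projection-valued measure $\dmu_{\bf P}(x) = {\bf P}(x)^\top {\bf P}(x)\dmu(x)$, the entries are
\[
(r(X_P))_{j,k} = \bs{e}_j^\top r(X_P)\,\bs{e}_k = \int_\bbR r(x)\, p_j(x)\, p_k(x)\dmu(x) = \langle p_j, p_k\rangle_{r\dmu},
\]
each integral being finite because $r\dmu$ has finite moments. With this identification the lemma becomes a statement about orthonormalizing ${\bf P}(x)$ in $L^2(\bbR, r\dmu)$, and everything reduces to bookkeeping with these inner products on $\ell^0$, where the upper-triangular $R$ is invertible.

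For the forward implication I would assume ${\bf P}(x) = {\bf Q}(x)R$, i.e. $p_j = \sum_{m=0}^j q_m R_{m,j}$ (a finite sum, since $R$ is upper triangular), substitute into the Gram entry above, and invoke orthonormality of ${\bf Q}(x)$ in $r\dmu$:
\[
\langle p_j, p_k\rangle_{r\dmu} = \sum_{m,n} R_{m,j}R_{n,k}\langle q_m,q_n\rangle_{r\dmu} = \sum_m R_{m,j}R_{m,k} = (R^\top R)_{j,k}.
\]
Every sum is finite, so this is a clean entrywise identity giving $r(X_P)=R^\top R$; comparing leading coefficients in $p_j = \sum_{m\le j} q_m R_{m,j}$ shows $R_{j,j}>0$ under the standard positive-leading-coefficient normalization, so this really is the positive-diagonal Cholesky factorization.

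For the reverse implication, given the Cholesky factor $R$ (upper triangular, positive diagonal), I would set $\tilde{\bf Q}(x) := {\bf P}(x)R^{-1}$ and verify that it is the modified orthonormal family. Since $R^{-1}$ is again upper triangular with positive diagonal, each $\tilde q_k$ is a degree-$k$ polynomial with positive leading coefficient, and a short computation using $r(X_P)=R^\top R$ gives
\[
\langle \tilde q_j, \tilde q_k\rangle_{r\dmu} = \bigl((R^{-1})^\top r(X_P) R^{-1}\bigr)_{j,k} = \bigl((R^{-1})^\top R^\top R\, R^{-1}\bigr)_{j,k} = \delta_{j,k}.
\]
By uniqueness of the degree-graded orthonormal polynomials for $r\dmu$ (equivalently, uniqueness of the positive-diagonal Cholesky factor) I conclude $\tilde{\bf Q} = {\bf Q}$, whence ${\bf P} = {\bf Q}R$.

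Finally, for $r(x){\bf Q}(x) = {\bf P}(x)R^\top$ I would compute the ${\bf P}$-coefficients of $r q_k$: using ${\bf P}={\bf Q}R$,
\[
\langle p_j, r q_k\rangle_\mu = \langle p_j, q_k\rangle_{r\dmu} = R_{k,j} = (R^\top)_{j,k},
\]
which is exactly column $k$ of $R^\top$. The main obstacle here, and the only place where genuine analysis enters, is that this column is a full (infinite) lower-triangular expansion, so I must justify that $r q_k\in L^2(\bbR,\dmu)$ before claiming $r q_k = \sum_{j\ge k} R_{k,j}\,p_j$ converges to it in $L^2(\bbR,\dmu)$. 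This follows from the standing assumptions: $v$ is a polynomial with no zeros on the closed set $\supp(\mu)$ and $|v|\to\infty$ at infinity, so $\inf_{\supp(\mu)}|v|>0$ and hence $r^2 q_k^2 = u^2 q_k^2/v^2 \le C\,u^2 q_k^2$, a polynomial that is $\dmu$-integrable by the finite-moment hypothesis. With integrability secured, the expansion converges and assembles columnwise into $r(x){\bf Q}(x) = {\bf P}(x)R^\top$.
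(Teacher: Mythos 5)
Your proposal is correct and follows essentially the same route as the paper: both directions hinge on identifying $r(X_P)$ with the Gram matrix $\int_\bbR {\bf P}(x)^\top r(x){\bf P}(x)\dmu(x)$, defining ${\bf \tilde Q}(x) = {\bf P}(x)R^{-1}$, checking orthonormality entrywise, and invoking uniqueness of the degree-graded orthonormal family. The only real difference is in the final identity, where the paper simply writes $r(x){\bf Q}(x) = {\bf P}(x)r(X_P)R^{-1}$ while you compute the coefficients $\langle p_j, rq_k\rangle_\mu$ and justify $rq_k \in L^2(\bbR,\dmu)$ explicitly --- a slightly more careful treatment of the same step, not a different argument.
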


\begin{proof}
By Proposition~\ref{prop:rposdef}, $r(X_P)$ is a self-adjoint positive definite operator hence its Cholesky factorization exists and $R$ is invertible on $\ell^0$: this much was known to Mysovskikh~\cite{Mysovskikh-178-1252-68}, who called principal finite sections of $r(X_P)$ the Gram matrix. Thus ${\bf \tilde Q}(x) = {\bf P}(x) R^{-1}$ connects two families of graded polynomials with leading coefficients of the same sign. While $R^{-1}$ is not necessarily bounded on $\ell^2$ we have $R^{-1} \bs{e}_n \in \ell^2$ for all $n$. Thus we find for $\tilde{q}_n(x) = {\bf \tilde{Q}}(x) \bs{e}_n$:
\begin{align*}
\int_\bbR \tilde{q}_m(x)^\top r(x) \tilde{q}_n(x)\dmu(x) & = \bs{e}_m^\top R^{-\top} \int_\bbR {\bf P}(x)^\top r(x) {\bf P}(x)\dmu(x)  R^{-1} \bs{e}_n \\
 &= \bs{e}_m^\top R^{-\top}  r(X_P)  R^{-1} \bs{e}_n = \delta_{m,n},
\end{align*}
which shows by uniqueness ${\bf \tilde{Q}}(x) = {\bf Q}(x)$. For the other direction we have
\[
r(X_P) = \int_\bbR {\bf P}(x)^\top r(x) {\bf P}(x) \dmu(x) = R^\top \int_\bbR {\bf Q}(x)^\top  r(x) {\bf Q}(x) \dmu(x) R = R^\top R.
\]
The final statement follows since:
\[
r(x) {\bf Q}(x) = {\bf P}(x) r(X_P) R^{-1} = {\bf P}(x) R^\top.
\]
\end{proof}

The polynomial measure modification, where $r(x) = u(x)$ is a polynomial, corresponds to the case $V = I$, in which case the banded Cholesky factorization of $U$ returns the connection coefficients, showing line (1) of Table~\ref{table:MOPs}, which is equivalent to the result of Gautschi~\cite{Gautschi-24-245-70}. That is, in this setting one can establish the result using only finite-dimensional linear algebra.

In the case that $\sqrt{u(x)}$ is a polynomial we establish line (2) of Table~\ref{table:MOPs} as follows:

\begin{lemma}
Suppose $r(x) = u(x)$ where $\sqrt{u(x)}$ is a polynomial. The QR factorization of $\sqrt U$ encodes the conversion matrix $R$, i.e.:
\begin{equation}
\sqrt{U} = Q R,
\end{equation}
if and only if ${\bf P}(x) = {\bf Q}(x) R$. Furthermore:
\[
\sqrt{u(x)} {\bf Q}(x) = {\bf P}(x) Q.
\]
\end{lemma}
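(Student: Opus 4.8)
The plan is to reduce the claim to the preceding Cholesky result by identifying the operator square root $\sqrt{U}$ with a banded multiplication matrix. Write $s(x) = \sqrt{u(x)}$, which by hypothesis is a polynomial; since $r = u$ is nonnegative and nonsingular on $\supp(\mu)$, we have $s(x) > 0$ there, so the banded symmetric matrix $s(X_P)$ is positive definite. Because $s(X_P)^2 = (s^2)(X_P) = u(X_P) = U$ and the positive square root of a positive operator is unique, I may conclude $\sqrt{U} = s(X_P)$. In particular $\sqrt{U}$ is banded and invertible, so by the $QR$ proposition quoted above it admits a unique $QR$ factorization $\sqrt{U} = QR$ with $Q$ orthogonal and $R$ upper triangular of positive phase.

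For the forward implication I would exploit that $\sqrt{U}$ is self-adjoint: from $\sqrt{U} = QR$ and $Q^\top Q = I$ one obtains $U = (\sqrt{U})^\top \sqrt{U} = R^\top Q^\top Q R = R^\top R$, so that $R$ is precisely the Cholesky factor of $U$. The preceding lemma then yields $\mathbf{P}(x) = \mathbf{Q}(x) R$ at once. Conversely, given $\mathbf{P}(x) = \mathbf{Q}(x) R$, that same lemma gives the Cholesky factorization $U = R^\top R$; setting $Q := \sqrt{U} R^{-1}$ and checking $Q^\top Q = R^{-\top} (\sqrt{U})^\top \sqrt{U} R^{-1} = R^{-\top} U R^{-1} = I$ shows $Q$ is an isometry, and orthogonality follows because $\sqrt{U}$ and $Q$ share the same (dense) range. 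Hence $\sqrt{U} = QR$ is the asserted $QR$ factorization, and uniqueness of both the Cholesky and $QR$ factorizations closes the equivalence.

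The furthermore statement then follows by a short manipulation. Multiplication by $s(x)$ acts on the right as $s(X_P) = \sqrt{U}$, i.e. $s(x)\mathbf{P}(x) = \mathbf{P}(x)\sqrt{U}$; combining this with $\mathbf{Q}(x) = \mathbf{P}(x) R^{-1}$ gives
\[
\sqrt{u(x)}\,\mathbf{Q}(x) = s(x)\mathbf{P}(x) R^{-1} = \mathbf{P}(x)\sqrt{U}R^{-1} = \mathbf{P}(x)(QR)R^{-1} = \mathbf{P}(x) Q.
\]

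I expect the main obstacle to be the operator-theoretic identification $\sqrt{U} = s(X_P)$ together with the attendant care about boundedness: when $X_P$ is unbounded, $\sqrt{U}$ is an unbounded banded operator, so I must appeal to the earlier observation that such operators still possess a $QR$ factorization with isometric $Q$ (orthogonal when the column space is dense) and track whether invertibility is being used on $\ell^0$ or on $\ell^2$. The nonsingularity of $r$ on $\supp(\mu)$ is exactly what guarantees $\sqrt{U}$ is injective with dense range, and hence that $Q$ is genuinely orthogonal rather than merely isometric.
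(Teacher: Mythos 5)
Your proposal is correct and follows essentially the same route as the paper: reduce to the preceding Cholesky lemma via $U = \sqrt{U}^\top\sqrt{U} = R^\top Q^\top Q R = R^\top R$, and obtain the furthermore statement from $\sqrt{u(x)}\,{\bf Q}(x) = {\bf P}(x)\sqrt{U}R^{-1} = {\bf P}(x)Q$. The only difference is that you make explicit two points the paper leaves implicit --- the identification $\sqrt{U} = s(X_P)$ via uniqueness of the positive square root, and the construction of $Q$ in the converse direction --- which is a welcome elaboration rather than a different argument.
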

\begin{proof}
First note that $R_{n,n}>0$ because the Cholesky factor of $r(X_P)$, the square of a polynomial, is invertible on $\ell^0$ (exactly as argued above). The first result follows as above by observing that
\[
U = \sqrt{U}^\top \sqrt{U}= R^\top Q^\top Q R = R^\top R.
\]
The second result follows from:
\[
\sqrt{u(x)} {\bf Q}(x) = {\bf P}(x) \sqrt{U} R^{-1} =  {\bf P}(x) Q.
\]
\end{proof}

We now consider $r(x) = 1/v(x)$ to be a reciprocal polynomial. Note that the entries of $V^{-1}$ are not computable\footnote{They can be approximated by approximating $v(x)^{-1}$ by a polynomial but this degenerates if $v(x)$ has zeros near $\supp(\mu)$.}. However, lines (3) and (4) of Table~\ref{table:MOPs} can be deduced from factorizations of $V$ directly: we know from the fact that $r(x)$ has no poles in $\supp(\mu)$ that $V$ has a bounded inverse. By Proposition~\ref{prop:QL} and Corollary~\ref{cor:LtL}, there exist $QL$ and reverse Cholesky factorizations and then the proofs follow along the same lines as above.

For other rational modifications we use factorizations of the left-hand side of Eq.~\eqref{eq:rationalmodification} that respect the upper-triangular structure of the Cholesky factors on the right-hand side. Two avenues to proceed require the infinite-dimensional $QL$ and reverse Cholesky algorithms.

\begin{theorem}[Rational case 1]
Suppose $r \in L^\infty(\bbR, \dmu)$ and let  $V = QL$. Then
\[
Q^\top UL^\top = \RRone^\top \RRone,
\]
if and only if ${\bf Q}(x)\RRone = {\bf P}(x) L^\top$. Furthermore:
\[
v(x) {\bf P}(x) = {\bf Q}(x) \RRone Q^\top.
\]
\end{theorem}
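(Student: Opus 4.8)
The plan is to mirror the proof of the Cholesky-based lemma that established Eq.~\eqref{eq:rationalmodification}, with the $QL$ factorization of $V$ acting as a change of variables. The central observation is that the Cholesky factor $R$ of $r(X_P)$ furnished by that lemma (so that ${\bf P}(x) = {\bf Q}(x)R$ and $r(X_P) = R^\top R$) provides an explicit candidate $\tilde R := R L^\top$. Since $R$ is upper triangular with positive diagonal (being the Cholesky factor of a positive definite operator) and $L^\top$ is upper triangular with positive diagonal (by the positive phase convention on $V = QL$), the product $\tilde R = R L^\top$ is again upper triangular with positive diagonal and invertible on $\ell^0$. Both assertions of the theorem will be shown to pin down this same $\tilde R$.

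For the easier implication, I would assume the quasimatrix identity ${\bf Q}(x)\tilde R = {\bf P}(x)L^\top$ and integrate against $r\,\dmu$. Because ${\bf Q}(x)$ is orthonormal in $L^2(\bbR, r\dmu)$, we have $\int_\bbR {\bf Q}(x)^\top {\bf Q}(x)\,r(x)\,\dmu(x) = I$, so
\[
\tilde R^\top \tilde R = \int_\bbR ({\bf Q}(x)\tilde R)^\top r(x)\,({\bf Q}(x)\tilde R)\,\dmu(x) = L\left(\int_\bbR {\bf P}(x)^\top r(x){\bf P}(x)\,\dmu(x)\right)L^\top = L\,r(X_P)\,L^\top .
\]
It then remains to simplify $L\,r(X_P)\,L^\top$. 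Writing $r(X_P) = V^{-1}U$ (valid since $U$ and $V$ commute and $V$ has a bounded inverse) and substituting $V^{-1} = L^{-1}Q^\top$ from $V = QL$, the factor $L$ cancels and I obtain $L\,r(X_P)\,L^\top = Q^\top U L^\top$, which is the desired factorization.

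For the converse I would invoke uniqueness of the Cholesky factorization. The product $Q^\top U L^\top$ is not visibly self-adjoint, but the computation above shows $Q^\top U L^\top = L\,r(X_P)\,L^\top$; since $r(X_P)$ is self-adjoint and positive definite by Proposition~\ref{prop:rposdef} and $L$ is invertible on $\ell^0$, this congruence is self-adjoint and positive definite, so its upper-triangular positive-diagonal Cholesky factor is unique on $\ell^0$. Having already exhibited $\tilde R = R L^\top$ as such a factor, together with the identity ${\bf Q}(x)\tilde R = ({\bf Q}(x)R)L^\top = {\bf P}(x)L^\top$, any $\tilde R$ solving $Q^\top U L^\top = \tilde R^\top \tilde R$ must equal $R L^\top$, whence the quasimatrix identity follows. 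I expect the main obstacle to be precisely this self-adjointness and positive-definiteness verification --- equivalently, the clean cancellation of $L$ that converts the opaque product $Q^\top U L^\top$ into the congruence $L\,r(X_P)\,L^\top$ --- since that is where the commutativity of $U$ and $V$ and the bounded invertibility of $V$ (which both guarantees the $QL$ factorization and licenses $V^{-1} = L^{-1}Q^\top$) are essential.

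Finally, the relation $v(x){\bf P}(x) = {\bf Q}(x)\tilde R Q^\top$ follows directly from the quasimatrix identity and the symmetry of $V$:
\[
{\bf Q}(x)\tilde R Q^\top = {\bf P}(x)L^\top Q^\top = {\bf P}(x)(QL)^\top = {\bf P}(x)V^\top = {\bf P}(x)V = v(x){\bf P}(x) .
\]
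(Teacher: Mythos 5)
Your proof is correct and follows essentially the same route as the paper's: both hinge on the identity $Q^\top U L^\top = L\,V^{-1}U\,L^\top = L\,r(X_P)\,L^\top$ and on orthonormality of ${\bf Q}$ in $L^2(\bbR, r\dmu)$, with the final statement handled identically via $V = L^\top Q^\top$. The only cosmetic difference is that for the forward direction you invoke uniqueness of the Cholesky factor of $L\,r(X_P)\,L^\top$ against the explicit candidate $\tilde R = RL^\top$, whereas the paper defines $\tilde{\bf Q}(x) = {\bf P}(x)L^\top\tilde R^{-1}$ and appeals to uniqueness of the orthonormal family --- equivalent arguments.
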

\begin{proof}
As above, nonsingularity of $r(x)$ on $\supp(\mu)$ guarantees that $V$ is invertible, and we find $V^{-1} = L^{-1}Q^\top$. Define ${\bf \tilde{Q}}(x) := {\bf P}(x) L^\top \RRone^{-1}$ and $\tilde{q}_n(x)$ as above, so that
\begin{align*}
\int_\bbR {\tilde q}_m(x)^\top {u(x) \over v(x)} {\tilde q}_n(x) \dmu(x) &=    \bs{e}_m^ \top \RRone^{-\top} L V^{-1} U  L^{\top} \RRone^{-1}  \bs{e}_n = \delta_{m,n}.
\end{align*}
For the other direction we have $R = \RRone L^{-\top}$ hence
\[
Q^\top U L^\top  = L V^{-1} U  L^\top  = L R^\top \int_\bbR {\bf Q}(x)^\top r(x) {\bf Q}(x) \dmu(x) R L^\top  =\RRone^\top \RRone.
\]
The final statement follows since $V = V^\top = L^\top Q^\top$ and hence:
\[
v(x) {\bf P}(x) = {\bf P}(x)  V = {\bf Q}(x) \RRone L^{-\top}  V = {\bf Q}(x) \RRone Q^\top.
\]
\end{proof}

\begin{theorem}[Rational case 2] Suppose $r \in L^\infty(\bbR, \dmu)$ and let  $V = L^\top L$. Then
\[
L UL^{-1} = \RRtwo^\top \RRtwo,
\]
if and only if ${\bf Q}(x)\RRtwo= {\bf P}(x) L^\top$. Furthermore:
\[
v(x) {\bf P}(x) = {\bf Q}(x) \RRtwo L.
\]
\end{theorem}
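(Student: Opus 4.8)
The plan is to follow the template of Rational case 1, substituting the reverse Cholesky factorization $V = L^\top L$ for the $QL$ factorization used there. Since $r$ is nonsingular on $\supp(\mu)$, $V = v(X_P)$ is invertible (its reverse Cholesky factor exists by Corollary~\ref{cor:LtL}), and from $V = L^\top L$ one reads off $V^{-1} = L^{-1} L^{-\top}$, equivalently $L^{-1} = V^{-1} L^\top$. The pivotal observation, which I would isolate first, is that the ostensibly nonsymmetric product $L U L^{-1}$ is in fact a congruence transform of $r(X_P)$:
\[
L U L^{-1} = L U V^{-1} L^\top = L\, r(X_P)\, L^\top,
\]
where I have used $U V^{-1} = u(X_P) v(X_P)^{-1} = r(X_P)$. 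Because $r(X_P)$ is self-adjoint and positive definite by Proposition~\ref{prop:rposdef} and $L$ is invertible, the right-hand side is self-adjoint positive definite, so its Cholesky factorization $\bar R^\top \bar R$ exists with $\bar R$ upper triangular and $\bar R_{i,i} > 0$. This single identity both legitimizes the factorization and drives both directions of the equivalence.

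For the direction $L U L^{-1} = \bar R^\top \bar R \Rightarrow {\bf Q}(x)\bar R = {\bf P}(x) L^\top$, I would define ${\bf \tilde Q}(x) := {\bf P}(x) L^\top \bar R^{-1}$ and show it is the orthonormal family for $r\dmu$. The matrix $L^\top \bar R^{-1}$ is upper triangular with positive diagonal (both $L$ and $\bar R$ carry the positive-phase convention), so ${\bf \tilde Q}(x)$ is a degree-graded polynomial basis with leading coefficients of the correct sign. Writing $\tilde q_n(x) = {\bf \tilde Q}(x)\bs{e}_n$ and using $\int_\bbR {\bf P}(x)^\top r(x) {\bf P}(x)\dmu(x) = r(X_P)$, the Gram computation collapses via the congruence identity:
\[
\int_\bbR \tilde q_m(x)^\top r(x) \tilde q_n(x)\dmu(x) = \bs{e}_m^\top \bar R^{-\top} L\, r(X_P)\, L^\top \bar R^{-1} \bs{e}_n = \bs{e}_m^\top \bar R^{-\top}\bar R^\top \bar R\, \bar R^{-1}\bs{e}_n = \delta_{m,n}.
\]
Uniqueness of the orthonormal polynomials then forces ${\bf \tilde Q}(x) = {\bf Q}(x)$, i.e. ${\bf Q}(x)\bar R = {\bf P}(x) L^\top$.

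For the converse, I would combine ${\bf Q}(x)\bar R = {\bf P}(x) L^\top$ with the connection ${\bf P}(x) = {\bf Q}(x) R$ and $r(X_P) = R^\top R$ from the earlier Cholesky lemma to deduce $\bar R = R L^\top$, whence
\[
\bar R^\top \bar R = L R^\top R L^\top = L\, r(X_P)\, L^\top = L U L^{-1}.
\]
The furthermore is then immediate from the multiplication relation $v(x){\bf P}(x) = {\bf P}(x) V$ together with $V = L^\top L$:
\[
v(x){\bf P}(x) = {\bf P}(x) L^\top L = {\bf Q}(x)\bar R L.
\]

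The routine algebra is frictionless once the congruence identity $L U L^{-1} = L\, r(X_P)\, L^\top$ is in hand, so I expect the main obstacle to be the operator-theoretic bookkeeping in infinite dimensions rather than the manipulations themselves. The reverse Cholesky factor $L$ has non-computable finite sections and is in general unbounded with unbounded inverse, so $L^{-1}$ and $\bar R^{-1}$ must be read as operators on $\ell^0$, and I must verify that the compositions are well defined and that $\bar R^{-1}\bs{e}_n \in \ell^2$ so the Gram integrals converge columnwise, exactly as in the earlier lemmas. Boundedness of $V^{-1}$, guaranteed by nonsingularity of $r$ on $\supp(\mu)$, is what licenses $L^{-1} = V^{-1} L^\top$ and the passage $U V^{-1} = r(X_P)$; confirming that uniqueness of the orthonormal family applies at the level of $\ell^0$ is the one place where care, rather than computation, is required.
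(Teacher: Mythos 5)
Your proof is correct and takes essentially the same route as the paper's: both hinge on the congruence identity $LUL^{-1} = LUV^{-1}L^\top = L\,r(X_P)\,L^\top$, define ${\bf \tilde Q}(x) = {\bf P}(x)L^\top\bar R^{-1}$ and verify orthonormality via the Gram integral, obtain the converse from $R = \bar R L^{-\top}$, and deduce the final statement from $V = L^\top L$. Your additional remarks on why the Cholesky factor $\bar R$ exists and on the $\ell^0$ bookkeeping are sound elaborations of points the paper leaves implicit.
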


\begin{proof}
Note for the last time that nonsingularity of $r(x)$ on $\supp(\mu)$ guarantees that $V$ is invertible, and we find $V^{-1} = L^{-1}L^{-\top}$. Define ${\bf \tilde{Q}}(x) := {\bf P}(x) L^\top \RRtwo^{-1}$ and $\tilde{q}_n(x)$ as above, so that
\begin{align*}
\int_\bbR \tilde{q}_m(x)^\top {u(x) \over v(x)} \tilde{q_n}(x) \dmu(x) &= \bs{e}_m^ \top \RRtwo^{-\top} L   V^{-1} U  L^{\top} \RRtwo^{-1}  \bs{e}_n = \delta_{m,n}.
\end{align*}
For the other direction we have $R = \RRtwo L^{-\top}$ hence
\[
L U L^{-1}  = L  U V^{-1}  L^\top  = L \int_\bbR {\bf P}(x)^\top r(x) {\bf P}(x) \dmu(x) L^\top  = \RRtwo^\top \RRtwo.
\]
The final statement follows since $V = V^\top = L^\top L$ and hence:
\[
v(x) {\bf P}(x) = {\bf P}(x)  V = {\bf Q}(x) \RRtwo L^{-\top}  V = {\bf Q}(x) \RRtwo L.
\]
\end{proof}

Through banded matrix factorizations, Table~\ref{table:MOPs} generalizes many familiar identities for the classical orthogonal polynomials. We give a few examples using the orthonormal generalized Laguerre polynomials~\cite[\S 18.3]{Olver-et-al-NIST-10}, related to the standard normalization by:
\[
\tilde{L}_n^{(\alpha)}(x) = \sqrt{\frac{\Gamma(n+1)}{\Gamma(n+\alpha+1)}}L_n^{(\alpha)}(x).
\]
Given $X_{\tilde{L}^{(\alpha)}}$, whose entries are determined by:
\[
x\tilde{L}_n^{(\alpha)}(x) = -\sqrt{n(n+\alpha)}\tilde{L}_{n-1}^{(\alpha)}(x) + (2n+\alpha+1)\tilde{L}_n^{(\alpha)}(x) - \sqrt{(n+1)(n+\alpha+1)}\tilde{L}_{n+1}^{(\alpha)}(x),
\]
four other operators arise by Cholesky and $QR$ factorization. These are:
\begin{enumerate}
\item The raising operator that is the upper Cholesky factor of $X_{\tilde{L}^{(\alpha)}} = R^\top R$:
\[
\tilde{L}_n^{(\alpha)}(x) = \sqrt{n+\alpha+1}\tilde{L}_n^{(\alpha+1)}(x) - \sqrt{n}\tilde{L}_{n-1}^{(\alpha+1)}(x),
\]
\item The lowering operator that is the transpose of the raising operator:
\[
x\tilde{L}_n^{(\alpha+1)}(x) = \sqrt{n+\alpha+1}\tilde{L}_n^{(\alpha)}(x) - \sqrt{n+1}\tilde{L}_{n+1}^{(\alpha)}(x),
\]
\item The iterated raising operator that is the triangular factor of $X_{\tilde{L}^{(\alpha)}} = QR$:
\begin{align*}
\tilde{L}_n^{(\alpha)}(x) & = \sqrt{(n+\alpha+1)(n+\alpha+2)}\tilde{L}_n^{(\alpha+2)}(x)\\
& - 2\sqrt{n(n+\alpha+1)}\tilde{L}_{n-1}^{(\alpha+2)}(x) + \sqrt{n(n-1)}L_{n-2}^{(\alpha+2)}(x),
\end{align*}
\item The corresponding orthogonal factor~\cite[Corollary 4.8]{Olver-Slevinsky-Townsend-29-573-20}:
\begin{align*}
x\tilde{L}_n^{(\alpha+2)}(x) & + \sqrt{\frac{n+1}{n+\alpha+2}}\tilde{L}_{n+1}^{(\alpha)}(x)\\
& = (\alpha+1)\sum_{\ell=0}^n\sqrt{\frac{\Gamma(\ell+\alpha+1)}{\Gamma(\ell+1)} \frac{\Gamma(n+1)}{\Gamma(n+\alpha+3)}} \tilde{L}_\ell^{(\alpha)}(x).
\end{align*}
\end{enumerate}

\begin{remark}
In the general rational cases, it is important to relate the main and first super-diagonals of $R$ to the factored forms of the connection coefficients to efficiently compute the modified Jacobi matrix $X_Q$. In the first general rational case, $R = \RRone L^{-\top}$, so:
\begin{align*}
R_{i,i} & = (\RRone)_{i,i}/L_{i,i},\\
R_{i,i+1} & = \frac{(\RRone)_{i,i+1} - R_{i,i}L_{i+1,i}}{L_{i+1,i+1}}.
\end{align*}
In the second general rational case, the same formulas hold with $\RRone \leftrightarrow \RRtwo$.

Finally, we observe that $\RRone$ is an upper-triangular {\em banded} matrix with upper bandwidth at most $\deg(u)+\deg(v)$. This inference follows from the same bound on the lower bandwidth of $Q^\top U L^\top$. Similarly, $\RRtwo$ is an upper-triangular {\em banded} matrix with upper bandwidth at most $\deg(u)$, a result that follows from comparison with $LUL^{-1}$. Since $\RRone$ and $\RRtwo$ are banded matrices, their principal finite sections are computable in $\mathcal{O}(n)$ flops.
\end{remark}

\subsection{$L^\infty$ measure modifications}\label{subsection:irrationalmeasuremodifications}

It is reasonable to consider irrational measure modifications $r(x)$. We wish to characterize the rational approximation error in $L^\infty(\bbR,\dmu)$ to the $2$-norm error in approximating the infinite-dimensional matrix $r(X_P)$. In this way, we will understand when polynomial and rational approximants construct nearby matrices of measure modifications to the true problems. This confers a significant computational advantage as then we are able to leverage the banded sparsity in the nearby problem.

\begin{lemma}[Proposition 1, \S VIII.3 in \cite{Reed-Simon-80}]
For any $r\in L^\infty(\bbR, \dmu)$:
\[
\|r(X_P)\|_2 = \|r\|_{L^\infty(\bbR, \dmu)}.
\]
\end{lemma}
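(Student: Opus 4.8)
The plan is to reduce the claim to the elementary fact that the norm of a multiplication operator on $L^2$ equals the essential supremum of its multiplier, by exhibiting $r(X_P)$ as unitarily equivalent to multiplication by $r$. Let $M_r:L^2(\bbR,\dmu)\to L^2(\bbR,\dmu)$ denote the multiplication operator $f\mapsto r\,f$. Since ${\bf P}(x)$ is a complete orthonormal basis, ${\cal S}$ is an isometric isomorphism with inverse ${\cal A}$ (so $\|{\cal S}\bs{v}\|=\|\bs{v}\|$ and ${\cal A}={\cal S}^*$), and I claim
\[
r(X_P) = {\cal A}\, M_r\, {\cal S}.
\]
Granting this, the norm-preservation under ${\cal S}$ gives $\|r(X_P)\|_2 = \|M_r\|_2$ at once, and it remains only to identify $\|M_r\|_2 = \|r\|_{L^\infty(\bbR,\dmu)}$.

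To establish the operator identity I would compute the associated bilinear form directly from the definition of $r(X_P)$ as the integral against the projection-valued measure $\dmu_{\bf P}(x) = {\bf P}(x)^\top{\bf P}(x)\dmu(x)$. For $\bs{f},\bs{g}\in\ell^2$, writing $f={\cal S}\bs{f}$ and $g={\cal S}\bs{g}$, one has
\[
\langle \bs{g}, r(X_P)\,\bs{f}\rangle_{\ell^2} = \int_\bbR r(x)\, \bs{g}^\top{\bf P}(x)^\top {\bf P}(x)\bs{f}\,\dmu(x) = \int_\bbR r(x)\, g(x)\, f(x)\,\dmu(x) = \langle g, M_r f\rangle_{L^2(\bbR,\dmu)},
\]
which is exactly $\langle \bs{g}, {\cal A} M_r {\cal S}\bs{f}\rangle_{\ell^2}$. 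Because $r$ is bounded, $M_r$ is bounded, so $r(X_P)$ is bounded and the bilinear form identifies the two operators.

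For the norm of $M_r$, the upper bound is immediate: $\|M_r f\|_{L^2}^2 = \int_\bbR |r(x)|^2|f(x)|^2\,\dmu(x) \le \|r\|_{L^\infty(\bbR,\dmu)}^2\,\|f\|_{L^2}^2$. For the matching lower bound, fix $\epsilon>0$ and set $E = \{x : |r(x)| > \|r\|_{L^\infty(\bbR,\dmu)} - \epsilon\}$, which has positive $\mu$-measure by the definition of the essential supremum and \emph{finite} measure because the $n=0$ moment condition forces $\mu(\bbR)<\infty$. Taking the unit vector $f = \chi_E/\sqrt{\mu(E)}\in L^2(\bbR,\dmu)$ gives $\|M_r f\|_{L^2}\ge \|r\|_{L^\infty(\bbR,\dmu)}-\epsilon$; letting $\epsilon\to 0$ yields $\|M_r\|_2\ge \|r\|_{L^\infty(\bbR,\dmu)}$, completing the identification.

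The main obstacle is the rigorous justification of the operator identity for a general bounded Borel $r$, together with the subtlety that the essential supremum is taken against $\mu$ itself rather than against some a priori different spectral measure. Both are resolved by the observation that, since $x{\bf P}(x)={\bf P}(x)X_P$, the operator $X_P$ is carried by ${\cal S}$ to multiplication by $x$ on $L^2(\bbR,\dmu)$, so the projection-valued measure of $X_P$ transforms into multiplication by indicators $M_{\chi_S}$ and $\mu$ plays the role of the scalar spectral measure. A clean way to make the interchange in the second paragraph precise is to verify the bilinear-form identity first for indicators and simple functions $r$, where $r(X_P)={\cal A}M_r{\cal S}$ reduces to a finite combination of these spectral projections, and then extend to bounded Borel $r$ by dominated convergence using $\mu(\bbR)<\infty$ and the uniform bound on $r$. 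Everything else — the norm-preservation under ${\cal S}$ and the multiplication-operator norm computation — is then routine.
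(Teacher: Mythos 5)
Your proof is correct. The paper supplies no argument of its own for this lemma --- it simply cites Proposition~1 of \S VIII.3 in \cite{Reed-Simon-80} --- and your derivation (conjugating $r(X_P)$ by the synthesis/analysis isometries to the multiplication operator $M_r$ on $L^2(\bbR,\dmu)$, then computing $\|M_r\|=\|r\|_{L^\infty(\bbR,\dmu)}$ via the essential-supremum set $E$, using the finiteness of $\mu$ from the $n=0$ moment condition) is precisely the standard argument underlying that citation, including the correct handling of why the essential supremum is taken against $\mu$ itself.
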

A consequence of this is that we can control the error in approximating a measure modification by a rational function:
\begin{theorem}
Given $\epsilon>0$ and a measure modification $r\in L^\infty(\bbR, \dmu)$, if it is approximated by a rational function $u/v$ such that:
\[
\left\|r - \frac{u}{v}\right\|_{L^\infty(\bbR, \dmu)} \le \epsilon \|r\|_{L^\infty(\bbR, \dmu)},
\]
then:
\[
\left\|r(X_P) - V^{-1}U\right\|_2 \le \epsilon \|r(X_P)\|_2.
\]
\end{theorem}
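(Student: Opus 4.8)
The plan is to reduce the statement to the isometry established in the preceding lemma, exploiting the linearity of the Borel functional calculus. First I would observe that the object $V^{-1}U$ is nothing but the operator $(u/v)(X_P)$ obtained from the rational approximant through functional calculus; this is exactly the identity $r(X_P) = u(X_P)v(X_P)^{-1} = v(X_P)^{-1}u(X_P)$ recorded earlier, read with the rational $u/v$ in place of $r$. Consequently the quantity to be bounded is $\|r(X_P) - (u/v)(X_P)\|_2$.

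Next I would verify that the hypothesis places $u/v$ in $L^\infty(\bbR,\dmu)$, so that the lemma is applicable to it and to the difference $r - u/v$. Writing $u/v = r - (r - u/v)$ and using the triangle inequality in $L^\infty$ gives $\|u/v\|_{L^\infty(\bbR,\dmu)} \le (1+\epsilon)\|r\|_{L^\infty(\bbR,\dmu)} < \infty$. Since $r$ and $u/v$ both lie in $L^\infty(\bbR,\dmu)$ they have finite $\mu$-moments, so the functional calculus $f \mapsto f(X_P)$ is defined on each; moreover it is linear in $f$, being integration against the common projection-valued measure $\dmu_{\bf P}$. Hence $r(X_P) - (u/v)(X_P) = (r - u/v)(X_P)$.

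The conclusion then follows by two applications of the lemma $\|f(X_P)\|_2 = \|f\|_{L^\infty(\bbR,\dmu)}$. Applying it with $f = r - u/v$ turns the operator-norm difference into an $L^\infty$ difference; the hypothesis bounds this by $\epsilon\|r\|_{L^\infty(\bbR,\dmu)}$; and a second application with $f = r$ rewrites $\|r\|_{L^\infty(\bbR,\dmu)}$ as $\|r(X_P)\|_2$. Chaining these two equalities with the single inequality yields $\|r(X_P) - V^{-1}U\|_2 \le \epsilon\|r(X_P)\|_2$.

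I anticipate no deep obstacle: the whole argument transports a scalar $L^\infty$ estimate through an isometric functional calculus, so the only points genuinely needing care are the admissibility checks. Specifically, one must confirm that $u/v$ (and therefore the difference $r - u/v$) lies in $L^\infty(\bbR,\dmu)$ so that the lemma's isometry actually applies, and that $f \mapsto f(X_P)$ is linear; both are immediate from the integral representation of the Borel functional calculus, which is why I expect the argument to be short.
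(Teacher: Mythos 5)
Your argument is correct and is precisely the one the paper intends: the theorem is stated as an immediate consequence of the preceding lemma $\|f(X_P)\|_2=\|f\|_{L^\infty(\bbR,\dmu)}$ (the paper leaves the proof implicit), via linearity of the Borel functional calculus and the identification $V^{-1}U=(u/v)(X_P)$. Your added admissibility checks that $u/v$ and $r-u/v$ lie in $L^\infty(\bbR,\dmu)$ are a sensible bit of extra care but do not change the route.
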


As an example, we consider a Jacobi polynomial approximation to an analytic function on $[-1,1]$. By classical results of approximation theory~\cite[Theorems 8.2 \& 12.1]{Trefethen-12}, the canonical finite-dimensional interpolation and projection errors are dominated by exponential convergence; in particular, the degree dependence is $\mathcal{O}(\log\epsilon^{-1})$ as $\epsilon\to0$. Thus, to relative error $\epsilon$, the matrix $r(X_P)$ is well-approximated by $U$ (and thereby trivially $V^{-1}U$) of effectively finite bandwidth.

On unbounded domains, it is impossible to approximate a bounded measure modification in the uniform norm by polynomials, partially motivating our interest in rational approximation.

\subsection{Measure modifications with $M$-matrix connection coefficients}

We include in this section a result on a property of the measure modification that implies a property of the connection coefficients. Since this property is inherently entry-wise, it is understood that in this section we work with principal finite sections of the connection coefficients; hence, all matrices are finite-dimensional.

\begin{definition}
A positive-definite matrix $A$ is an $M$-matrix if $A_{i,i} > 0$, $A_{i,j}\le0$ for $i\ne j$. A symmetric $M$-matrix is also known as a Stieltjes matrix. 
\end{definition}

Triangular $M$-matrices and their inverses have tighter-than-general $2$-norm condition number estimates~\cite{Qi-56-105-84}; and it has been useful to identify this property in the setting of a connection problem~\cite{Klippenstein-Slevinsky-403-113831-22}.

\begin{lemma}[Fiedler and Pt\'ak~\cite{Fiedler-Ptak-12-382-62}]\label{lemma:MmatrixLU}
Let $A$ be an $M$-matrix. Then $A=LU$ and both $L$ and $U$ are $M$-matrices.
\end{lemma}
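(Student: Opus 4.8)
The plan is to prove the factorization by Gaussian elimination and to track the sign pattern through a single elimination step, recasting the inductive content as the statement that the Schur complement of an $M$-matrix is again an $M$-matrix. Throughout I would use the Fiedler--Pt\'ak characterization of an $M$-matrix as a matrix with nonpositive off-diagonal entries and strictly positive principal minors; for the symmetric (Stieltjes) case this coincides with the positive-definiteness in the definition, and for the triangular factors $L$ and $U$ it is exactly the condition that must be checked, since a triangular matrix with positive diagonal automatically has all principal minors positive.

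First I would establish existence and uniqueness. Because $A$ has positive principal minors, in particular every \emph{leading} principal minor is positive and nonzero, so $A$ admits a unique factorization $A=LU$ with $L$ unit lower-triangular and $U$ upper-triangular carrying the pivots on its diagonal; this is the standard consequence of Gaussian elimination without pivoting. It remains only to control the signs of the entries produced.

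The heart of the argument is one elimination step together with an induction on the size of $A$. Write $A=\begin{pmatrix}\alpha & b^\top\\ c & D\end{pmatrix}$ with $\alpha=A_{1,1}>0$, first-row tail $b$ and first-column tail $c$ both entrywise nonpositive, and trailing block $D$ (positive diagonal, nonpositive off-diagonal). Eliminating the first column produces the multipliers $c/\alpha\le 0$, which form the first column of $L$; it leaves the first row $(\alpha,\,b^\top)$ of $U$ with $\alpha>0$ and $b\le 0$; and it replaces $D$ by the Schur complement $S=D-\alpha^{-1}cb^\top$. The off-diagonal entries of $S$ stay nonpositive by a sign chase: for $i\ne j$ we have $S_{ij}=D_{ij}-\alpha^{-1}c_ib_j\le 0$, since $D_{ij}\le 0$ while $\alpha^{-1}c_ib_j\ge 0$. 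To see that $S$ inherits positive principal minors I would invoke the Schur-complement quotient identity: for any index set $\beta$ of the trailing block, $\det S[\beta]=\det A[\{1\}\cup\beta]/\alpha$, which is positive because both numerator and denominator are positive. Hence $S$ is again an $M$-matrix, and the inductive hypothesis applies to it.

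Completing the induction, the factorization of the smaller $M$-matrix $S$ supplies the remaining columns of $L$ and rows of $U$, so every off-diagonal entry of $L$ and of $U$ is nonpositive, while $L$ has unit (hence positive) diagonal and $U$ has the positive pivots on its diagonal. Being triangular with positive diagonals, $L$ and $U$ have all principal minors positive, so both are $M$-matrices. I expect the only genuine obstacle to be the verification that the Schur complement retains positive principal minors: the off-diagonal sign pattern is immediate from the nonpositivity of $b$ and $c$, but the minor positivity really requires the quotient identity (equivalently, that the Schur complement of a matrix with positive principal minors again has positive principal minors), and this is the point where the positive-definiteness hypothesis is used in an essential rather than cosmetic way.
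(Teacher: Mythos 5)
The paper does not actually prove this lemma: it is quoted as a cited result of Fiedler and Pt\'ak, so there is no in-paper argument to compare yours against, and your proof must stand on its own. It does. Existence and uniqueness of $A=LU$ follow from positivity of the leading principal minors; the single elimination step correctly shows that the multipliers $c/\alpha$ and the first row $(\alpha,\,b^\top)$ have the required sign pattern and that the Schur complement $S=D-\alpha^{-1}cb^\top$ keeps nonpositive off-diagonal entries; and the quotient identity $\det S[\beta]=\det A[\{1\}\cup\beta]/\alpha$ (a principal submatrix of the Schur complement is the Schur complement of the corresponding principal submatrix) is exactly the right tool to close the induction, since $\det A[\{1\}\cup\beta]=\alpha\det S[\beta]$ by the block-determinant formula. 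The only point worth flagging is definitional. You work with the Fiedler--Pt\'ak characterization (nonpositive off-diagonals, all principal minors positive), whereas the paper's definition reads ``a positive-definite matrix $A$ is an $M$-matrix if $A_{i,i}>0$, $A_{i,j}\le 0$''. For nonsymmetric matrices these classes differ: the matrix $\left(\begin{smallmatrix}1&-3\\0&1\end{smallmatrix}\right)$ has positive principal minors and nonpositive off-diagonal, yet its symmetric part is indefinite, so it is not positive definite in the quadratic-form sense. Consequently your $L$ and $U$ are $M$-matrices in the classical sense but need not satisfy the paper's literal definition --- and neither, in general, do the Cholesky factors of a Stieltjes matrix that the paper goes on to invoke. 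This is a looseness in the paper's definition rather than a gap in your argument, but you should state explicitly which characterization your conclusion is asserted in.
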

In particular, this means that the Cholesky factors of a Stieltjes matrix are $M$-matrices.

\begin{definition}[Fiedler and Schneider~\cite{Fiedler-Schneider-13-185-83}]
A smooth function $f :(0,\infty)\to[0,\infty)$ is {\em completely monotonic} if $(-1)^kf^{(k)}(x)\ge0$ for $x>0$ and $k\in\mathbb{N}_0$.
\end{definition}

\begin{theorem}[Fiedler and Schneider~\cite{Fiedler-Schneider-13-185-83}]\label{theorem:completelymonotonic}
Let $f$ be positive on $(0,\infty)$ and $f'$ be completely monotonic. Then $A$ is an $M$-matrix if and only if $f(A)$ is an $M$-matrix.
\end{theorem}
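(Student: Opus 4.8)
The plan is to reduce the biconditional to a single sign criterion for the off-diagonal entries and then treat the two implications asymmetrically. In the cases of interest $A$ is symmetric, so since it is positive definite its spectrum lies in $(0,\infty)$, $f(A)$ is well defined by the functional calculus, and $f(A)$ is again symmetric positive definite; consequently both $A$ and $f(A)$ are $M$-matrices \emph{precisely} when their off-diagonal entries are nonpositive. The tool I would build first is the equivalence, valid for any symmetric $C$ with positive diagonal,
\[
C_{i,j}\le 0 \ (i\ne j)\quad\Longleftrightarrow\quad (e^{-tC})_{i,j}\ge 0\ \text{for all } i,j \text{ and all } t\ge 0.
\]
The forward implication follows by writing $-C = M - cI$ with $c\ge\max_i C_{i,i}$, so that $M$ has nonnegative entries and $e^{-tC} = e^{-ct}\sum_{k\ge0}(tM)^k/k!$ is entrywise nonnegative; the reverse implication follows by differentiating the nonnegative quantity $(e^{-tC})_{i,j}$, which vanishes at $t=0$ for $i\ne j$, at $t=0^+$ to obtain $-C_{i,j}\ge 0$.

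For the implication ``$A$ an $M$-matrix $\Rightarrow f(A)$ an $M$-matrix'' I would invoke Bernstein's theorem. Complete monotonicity of $f'$ gives $f'(x)=\int_{[0,\infty)}e^{-sx}\,d\nu(s)$ with $\nu\ge0$, and integrating yields the Bernstein representation $f(x)=a+bx+\int_{(0,\infty)}(1-e^{-sx})\,d\mu(s)$ with $a=f(0^+)\ge0$ (positivity of $f$ forces $a\ge0$), $b\ge0$, and $\mu\ge0$. Applying the functional calculus spectrally,
\[
f(A)=aI+bA+\int_{(0,\infty)}\big(I-e^{-sA}\big)\,d\mu(s).
\]
If $A$ is an $M$-matrix then $e^{-sA}\ge0$ entrywise by the criterion above, so every term has nonpositive off-diagonal entries: $aI$ contributes nothing off the diagonal, $bA$ contributes $bA_{i,j}\le0$, and $(I-e^{-sA})_{i,j}=-(e^{-sA})_{i,j}\le0$. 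Hence $f(A)$ is an $M$-matrix. Equivalently, since $f$ is a Bernstein function, $e^{-tf}$ is completely monotone, giving the subordination identity $e^{-tf(A)}=\int_{[0,\infty)}e^{-sA}\,d\rho_t(s)\ge0$ for every $t$.

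The hard part will be the converse. Using the spectral expansion $A=\sum_k\lambda_k\bs{v}_k\bs{v}_k^\top$ and writing $w_k=(\bs{v}_k)_i(\bs{v}_k)_j$, the hypothesis that $f(A)$ is an $M$-matrix reads $\sum_k w_k\,e^{-tf(\lambda_k)}\ge0$ for all $t\ge0$ (with $\sum_k w_k=0$ since $i\ne j$), while the desired conclusion reads $\sum_k w_k\,e^{-s\lambda_k}\ge0$ for all $s\ge0$. Thus the converse amounts to deconvolving the subordination identity above, i.e.\ to showing that reparametrizing the frequencies of a nonnegative exponential sum by $f^{-1}$ preserves nonnegativity. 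The one clean step I would take is the Daleckii--Krein formula for the Fréchet derivative of $f$ at a positive diagonal matrix $D=\diag(d_1,\dots,d_n)$, whose $(i,j)$ entry applied to $E$ equals $\frac{f(d_i)-f(d_j)}{d_i-d_j}E_{i,j}$; as $f$ is strictly increasing this divided difference is positive, so to leading order $f(A)_{i,j}$ and $A_{i,j}$ carry the same sign for $A$ near the diagonal. The main obstacle is upgrading this infinitesimal statement to a global one, since $f(A)$ is a degree-$(n-1)$ polynomial in $A$ whose off-diagonal entries mix the nonpositive linear term $bA_{i,j}$ with sign-indefinite higher path-sums $(A^k)_{i,j}$; controlling these cancellations is exactly where the full strength of complete monotonicity must enter. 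I anticipate that the cleanest way to close the converse is to argue at the level of the admissible function class, probing with explicit low-dimensional $M$-matrices and reading off the required signs $(-1)^k f^{(k+1)}\ge0$ from the divided-difference expansion, thereby characterizing the property in both directions.
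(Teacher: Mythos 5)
The paper offers no proof of this statement---it is quoted verbatim from Fiedler and Schneider with a citation---so your attempt can only be judged on its own terms. Your forward direction is essentially complete and correct: the equivalence between nonpositive off-diagonal entries of a symmetric $C$ and entrywise nonnegativity of $e^{-tC}$ for all $t\ge 0$ is properly argued in both directions, and combining it with the Bernstein representation $f(x)=a+bx+\int_{(0,\infty)}(1-e^{-sx})\,d\mu(s)$ (valid because $f\ge 0$ and $f'$ is completely monotone) cleanly shows that every term of $f(A)$ has nonpositive off-diagonal entries, while positivity of $f$ on the spectrum of $A$ gives positive definiteness of $f(A)$. This is the standard route and there is nothing to object to there.

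The converse, however, is a genuine gap, and you acknowledge as much. The Daleckii--Krein divided-difference formula gives only an infinitesimal statement at diagonal matrices and does not control the sign of $f(A)_{i,j}$ for a general positive definite symmetric $A$; nothing in your sketch upgrades it to the global implication ``$f(A)$ an $M$-matrix $\Rightarrow$ $A$ an $M$-matrix.'' Your closing sentence also conflates two different statements: probing with low-dimensional $M$-matrices to read off the signs $(-1)^kf^{(k+1)}\ge 0$ is the strategy for proving that complete monotonicity of $f'$ is \emph{necessary} for $f$ to preserve the $M$-matrix property (a characterization of the admissible class of functions), which is not the same as the converse implication for a \emph{fixed} $f$ in that class. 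A warning sign that more is needed: a constant $f$ satisfies the stated hypotheses literally ($f>0$, $f'\equiv 0$ completely monotone) yet sends every positive definite $A$ to a multiple of the identity, so the ``only if'' direction cannot follow from soft arguments alone---some use of strict monotonicity and the structure of $f^{-1}$ (or an equivalent resolvent characterization of $M$-matrices) is required, and that is precisely the part your proposal does not supply.
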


\begin{corollary}
Let $r(x) = f(s(x))$ where $f$ is positive on $(0,\infty)$ and $f'$ is completely monotonic and $s$ is smooth. If principal finite sections of $s(X_P)$ are Stieltjes matrices, then principal finite sections of $r(X_P)$ are Stieltjes matrices if and only if principal finite sections of $R$ are $M$-matrices.
\end{corollary}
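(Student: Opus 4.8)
The plan is to reduce to finite sections (the convention of this subsection) and then bridge the Stieltjes property of $r(X_P)$ and the $M$-matrix property of $R$ through the Cholesky connection \eqref{eq:rationalmodification}, $r(X_P) = R^\top R$, chaining together Fiedler--Pt\'ak (Lemma~\ref{lemma:MmatrixLU}) and Fiedler--Schneider (Theorem~\ref{theorem:completelymonotonic}).

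First I would fix $n$ and record that the Cholesky factorization respects truncation: since $R$ is upper triangular, $P_n r(X_P) P_n = (P_n R P_n)^\top (P_n R P_n)$, so $P_n R P_n$ is precisely the Cholesky factor of the finite section $P_n r(X_P) P_n$. By Proposition~\ref{prop:rposdef} this finite section is symmetric positive definite, so its diagonal is automatically positive and the only substantive content in ``Stieltjes'' is the nonpositivity of the off-diagonal entries.

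The forward implication is then immediate: if $P_n r(X_P) P_n$ is Stieltjes, it is a symmetric $M$-matrix, and by Fiedler--Pt\'ak together with the remark that the Cholesky factors of a Stieltjes matrix are $M$-matrices, $P_n R P_n$ is an (upper-triangular) $M$-matrix. For the reverse implication it suffices to show that, under the standing hypothesis, the finite sections of $r(X_P)$ are themselves Stieltjes --- for then they are Stieltjes whenever $R$ is an $M$-matrix, and indeed unconditionally. This is where Fiedler--Schneider enters: since $f$ is positive on $(0,\infty)$ with completely monotonic derivative, $A$ is an $M$-matrix if and only if $f(A)$ is, so applying this with $A = s(X_P)$ (whose finite sections are Stieltjes, hence $M$-matrices) should transfer the nonpositive off-diagonal sign pattern to $r(X_P) = f(s(X_P))$; combined with symmetry and Proposition~\ref{prop:rposdef}, this makes every finite section of $r(X_P)$ Stieltjes, closing the equivalence.

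The hard part will be the interchange of the Borel functional calculus with truncation. Fiedler--Schneider is a statement about finite matrices, whereas the objects at hand are the sections $P_n f(s(X_P)) P_n$, which in general differ from $f(P_n s(X_P) P_n)$: applying $f$ and then truncating is not the same as truncating and then applying $f$, as one already sees for $f(x) = x^2$ with tridiagonal $X_P$. I would therefore argue at the operator level, viewing the hypothesis as saying that $s(X_P)$ is positive definite with positive diagonal and nonpositive off-diagonal entries (an infinite analogue of a Stieltjes matrix, every finite section of which is Stieltjes), and then establishing that the off-diagonal sign structure in Fiedler--Schneider's conclusion persists for $f(s(X_P))$ --- the relevant entrywise inequalities being both inherited by and detectable on finite sections. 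Making this persistence rigorous, rather than naively applying the finite-matrix theorem across the non-commuting truncation, is the delicate step on which I would concentrate.
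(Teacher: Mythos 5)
Your proposal follows the paper's route exactly: the paper's entire proof is the single sentence that the corollary ``constructively combines'' Fiedler--Pt\'ak (Lemma~\ref{lemma:MmatrixLU}) and Fiedler--Schneider (Theorem~\ref{theorem:completelymonotonic}), i.e.\ Fiedler--Schneider transfers the Stieltjes property from $s(X_P)$ to $r(X_P)=f(s(X_P))$ and Fiedler--Pt\'ak transfers it from $r(X_P)=R^\top R$ to the Cholesky factor $R$, which is precisely your argument (including the observation that upper-triangularity makes the Cholesky factorization commute with principal sections). The delicate point you flag --- that $P_n f(s(X_P))P_n \ne f(P_n s(X_P)P_n)$ in general, so the finite-dimensional Fiedler--Schneider theorem does not apply verbatim across the truncation --- is not addressed by the paper at all, so your proposal is, if anything, more careful than the source and is not missing any ingredient the paper supplies.
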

\begin{proof}
This corollary constructively combines Lemma~\ref{lemma:MmatrixLU} and Theorem~\ref{theorem:completelymonotonic}.
\end{proof}

Given the Jacobi polynomials, it is easy to show that principal finite sections of $I-X_P$ and $I-X_P^2$ are Stieltjes matrices, corresponding to $s(x) = 1-x$ and $s(x) = 1-x^2$, respectively. For the Laguerre polynomials, principal finite sections of $X_P$ itself are Stieltjes matrices. A few concrete examples of $f$ positive and $f'$ completely monotonic include:
\begin{enumerate}
\item $f(x) = x^\lambda$ for $0\le\lambda\le1$; and,
\item $\displaystyle f(x) = \frac{x}{x+\gamma}$ for $\gamma>0$.
\end{enumerate}

\subsection{Banded derivatives of modified classical orthogonal polynomials}\label{section:bandedderivatives}

Classical orthogonal polynomials are characterized by Bochner~\cite{Bochner-29-730-29} and Krall~\cite{Krall-4-705-38} as the polynomial solutions of the degree-preserving second-order linear differential equation:
\begin{equation}\label{eq:COPEIG}
\left(\sigma \DD^2 + \tau \DD + \lambda_n\right) p_n = 0,
\end{equation}
where $\sigma$ and $\tau$ are polynomials in $x$ independent of $n$ that satisfy $\deg(\sigma) \le 2$ and $\deg(\tau) \le 1$, and the eigenvalues $\lambda_n = -\frac{n}{2}\left[(n-1)\sigma''+2\tau'\right]\ge0$. We call this factorization degree-preserving because the degrees of the polynomial variable coefficients do not exceed the orders of the respective differential operators.

In addition, the measure is expressed in terms of a positive weight function, $\dmu(x) = w(x){\rm\,d}x$ supported on the (possibly unbounded) interval $(a,b)$, that satisfies the first-order Pearson differential equation:
\begin{equation}\label{eq:Pearson}
\DD(\sigma w) = \tau w.
\end{equation}
A useful property of $\sigma$ is that it is zero at finite boundary points: for $\chi$ equal to $a$ or $b$ if they are finite we have
$
\sigma(\chi) w(\chi) = 0,
$
annihilating boundary terms in integration by parts.

By considering the self-adjoint form of Eq.~\eqref{eq:COPEIG}:
\begin{equation}\label{eq:COPSAEIG}
(-\DD)\left(\sigma w\DD\right)p_n = \lambda_n w p_n,
\end{equation}
it follows that the polynomials $\DD p_n$ are orthogonal with respect to $L^2(\bbR, \sigma\dmu)$. Thus the characterization of classical orthogonal polynomials is often stated as those polynomials whose derivatives are also orthogonal polynomials.

It follows that differentiation of the classical orthogonal polynomials can be represented by infinite-dimensional banded matrices:
\begin{equation}\label{eq:classicalderivative}
\DD {\bf P}(x) = {\bf P}'(x) D_P^{P'},
\end{equation}
where $\DD$ denotes the derivative operator, $ {\bf P}(x)$ are the orthonormal polynomials with respect to $\dmu(x) = w(x){\rm\,d}x$,  $ {\bf P}'(x)$ are the orthonormal polynomials with respect to $\sigma \dmu$ and $D_P^{P'}$ is a banded matrix with nonzero entries only on the first super-diagonal. Moreover, the super-diagonal entries of $D_P^{P'}$ may be found by Eq.~\eqref{eq:COPSAEIG}:
\[
(D_P^{P'})^\top D_P^{P'} = \Lambda,\qquad \Lambda = \diag(\lambda_0,\lambda_1,\ldots).
\]
From a computational perspective, banded multiplication, raising, and differentiation enable sparse spectral methods to be constructed for linear differential equations with (polynomial) variable coefficients~\cite{Olver-Townsend-55-462-13,Olver-Slevinsky-Townsend-29-573-20}.

In light of the current work on rationally modified orthogonal polynomials, we may ask:
\begin{quotation}
Is there a basis in which the derivative of the rationally modified polynomials is banded?
\end{quotation}
The following theorem shows that banded derivatives exist for this problem and also more generally in the case of classical weights modified by algebraic powers of a product of polynomials. Let $\bs{\alpha} = (\alpha_1,\ldots,\alpha_d)$ be a multi-index and $\bs{u}(x) = (u_1(x),\ldots,u_d(x))$ be a collection of polynomials. We will use the following notation:
\[
\bs{u}^{\bs{\alpha}}(x) = \prod_{i=1}^d u_i^{\alpha_i}(x),
\]
and the shorthand $\displaystyle \deg(\bs{u}^{\bs{\alpha}}) = \sum_{i=1}^d \deg(u_i^{\alpha_i})$ if $\bs{\alpha}\in\mathbb{N}_0^d$. Define $\lambda:\mathbb{R}^d\to\mathbb{N}_0^d$ by:
\[
[\lambda(\bs{\alpha})]_i = \left\{\begin{array}{cc} 1 & \alpha_i\ne0,\\ 0 & \alpha_i=0.\end{array}\right.
\]
\begin{theorem}\label{theorem:bandedderivatives}
Let $w(x)$ be a classical orthogonal polynomial weight, let $\sigma(x)$ be the corresponding degree at most $2$ polynomial, let:
\[
\dmu^{(\bs{\alpha},\beta)}(x) = w^{(\bs{\alpha},\beta)}(x) {\rm\,d}x= \bs{u}^{\bs{\alpha}}(x)\sigma^\beta(x)w(x) {\rm\,d}x,
\]
with $\bs{u}^{\bs{\alpha}}(x)\sigma^\beta(x)$ such that $\dmu^{(\bs{\alpha},\beta)}(x)$ is a positive Borel measure with finite moments. Further, let ${\bf P}^{(\bs{\alpha},\beta)}(x)$ be orthonormal polynomials with respect to $L^2(\bbR, \dmu^{(\bs{\alpha},\beta)})$ for any real $\bs{\alpha}$ and $\beta$ such that, in particular:
\[
\lim_{x\to a^+} x^n w^{(\bs{\alpha}+\lambda(\bs{\alpha}),\beta+1)}(x) = \lim_{x\to b^-} x^n w^{(\bs{\alpha}+\lambda(\bs{\alpha}),\beta+1)}(x) = 0,\quad\forall n\in\mathbb{N}_0.
\]
Then:
\begin{equation}
\DD {\bf P}^{(\bs{\alpha},\beta)}(x) = {\bf P}^{(\bs{\alpha}+\lambda(\bs{\alpha}), \beta+1)}(x)D_{(\bs{\alpha}, \beta)}^{(\bs{\alpha}+\lambda(\bs{\alpha}), \beta+1)},
\end{equation}
where $D_{(\bs{\alpha}, \beta)}^{(\bs{\alpha}+\lambda(\bs{\alpha}), \beta+1)}$ is strictly upper triangular and has upper bandwidth at most $\deg(\bs{u}^{\lambda(\bs{\alpha})})+1$.
\end{theorem}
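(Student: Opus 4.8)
The plan is to read off the matrix $D := D_{(\bs{\alpha},\beta)}^{(\bs{\alpha}+\lambda(\bs{\alpha}),\beta+1)}$ entrywise as the generalized Fourier coefficients
\[
D_{k,n} = \int_a^b \big(\DD p_n^{(\bs{\alpha},\beta)}(x)\big)\, p_k^{(\bs{\alpha}+\lambda(\bs{\alpha}),\beta+1)}(x)\, w^{(\bs{\alpha}+\lambda(\bs{\alpha}),\beta+1)}(x)\dx,
\]
and to bound the two bandwidths separately. The lower bandwidth is immediate from degree counting: since $\DD p_n^{(\bs{\alpha},\beta)}$ has degree $n-1$ while $p_k^{(\bs{\alpha}+\lambda(\bs{\alpha}),\beta+1)}$ is orthogonal to every polynomial of degree less than $k$, the coefficient $D_{k,n}$ vanishes whenever $k\ge n$, giving lower bandwidth $-1$ (strict upper triangularity).

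For the upper bandwidth I would integrate by parts to move the derivative off $p_n^{(\bs{\alpha},\beta)}$. Writing $W := w^{(\bs{\alpha}+\lambda(\bs{\alpha}),\beta+1)}$ and noting the factorization $W = \bs{u}^{\lambda(\bs{\alpha})}\sigma\, w^{(\bs{\alpha},\beta)}$, the boundary term $[\,p_n^{(\bs{\alpha},\beta)}\,p_k^{(\bs{\alpha}+\lambda(\bs{\alpha}),\beta+1)}\,W\,]_a^b$ is a polynomial times $W$ and hence vanishes at both endpoints by the standing hypothesis $\lim_{x\to a^+}x^n W = \lim_{x\to b^-}x^n W = 0$. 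What remains is
\[
D_{k,n} = -\int_a^b p_n^{(\bs{\alpha},\beta)}(x)\,\DD\!\big[p_k^{(\bs{\alpha}+\lambda(\bs{\alpha}),\beta+1)}(x)\,W(x)\big]\dx.
\]

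The crux is to show that $\DD[p_k^{(\bs{\alpha}+\lambda(\bs{\alpha}),\beta+1)}W]$ factors as $g_k\, w^{(\bs{\alpha},\beta)}$ with $g_k$ a genuine polynomial of degree at most $k+\deg(\bs{u}^{\lambda(\bs{\alpha})})+1$. Using the logarithmic derivative $W'/W = \sum_i(\alpha_i+\lambda(\bs{\alpha})_i)u_i'/u_i + (\beta+1)\sigma'/\sigma + w'/w$ together with the Pearson equation~\eqref{eq:Pearson} in the form $w'/w = (\tau-\sigma')/\sigma$, one has $g_k = (p_k^{(\bs{\alpha}+\lambda(\bs{\alpha}),\beta+1)})'\,\bs{u}^{\lambda(\bs{\alpha})}\sigma + p_k^{(\bs{\alpha}+\lambda(\bs{\alpha}),\beta+1)}\,h$, where $h := \bs{u}^{\lambda(\bs{\alpha})}\sigma\cdot W'/W$. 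The delicate point, and the main obstacle, is verifying that $h$ has no poles: the factor $\bs{u}^{\lambda(\bs{\alpha})}$ cancels each $1/u_i$ (for $\alpha_i\ne0$ it supplies exactly one factor $u_i$, while for $\alpha_i=0$ the coefficient $\alpha_i+\lambda(\bs{\alpha})_i$ vanishes so no pole arises), and the single extra factor $\sigma$, which is precisely why $\beta$ is raised to $\beta+1$, cancels the $\sigma$ in the denominators of $\sigma'/\sigma$ and $(\tau-\sigma')/\sigma$. A short degree count using $\deg\sigma\le2$ and $\deg\tau\le1$ then gives $\deg h \le \deg(\bs{u}^{\lambda(\bs{\alpha})})+1$ and hence $\deg g_k \le k+\deg(\bs{u}^{\lambda(\bs{\alpha})})+1$.

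Finally, since $p_n^{(\bs{\alpha},\beta)}$ is orthogonal with respect to $w^{(\bs{\alpha},\beta)}$ to every polynomial of degree below $n$, the last displayed integral vanishes whenever $\deg g_k < n$, i.e. whenever $n-k > \deg(\bs{u}^{\lambda(\bs{\alpha})})+1$. This is exactly the claimed upper bandwidth, which together with the lower bandwidth completes the argument. The only genuine work is the pole-cancellation and degree bookkeeping for $g_k$; everything else is degree counting and a boundary-term check that the hypothesis was tailored to supply.
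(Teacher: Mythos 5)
Your proof is correct and follows essentially the same route as the paper's: compute the entries as inner products, integrate by parts (with the boundary term killed by the stated limit hypothesis), expand the derivative of the weight via the Pearson relation, count degrees, and invoke orthogonality. The only organizational difference is that you work with the target weight $w^{(\bs{\alpha}+\lambda(\bs{\alpha}),\beta+1)}$ directly --- noting that the coefficient $\alpha_i+\lambda(\bs{\alpha})_i$ vanishes when $\alpha_i=0$, so no pole arises --- whereas the paper first bounds the bandwidth for the relaxed exponent $\bs{\alpha}+\bs{1}$ and then remarks that zero exponents overestimate it; your handling of that point is, if anything, slightly tidier.
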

\begin{proof}
Differentiation of polynomials reduces the degree; hence, $D_{(\bs{\alpha}, \beta)}^{(\bs{\alpha}+\lambda(\bs{\alpha}), \beta+1)}$ is strictly upper triangular. Consider at first the relaxed problem:
\[
\DD {\bf P}^{(\bs{\alpha},\beta)}(x) = {\bf P}^{(\bs{\alpha}+\bs{1},\beta+1)}(x)D_{(\bs{\alpha},\beta)}^{(\bs{\alpha}+\bs{1},\beta+1)}.
\]
To find its upper bandwidth, we note that: 
\begin{align*}
& (D_{(\bs{\alpha},\beta)}^{(\bs{\alpha}+\bs{1},\beta+1)})_{m,n}\\%
& = \int_a^b p_m^{(\bs{\alpha}+\bs{1},\beta+1)}(x) \DD p_n^{(\bs{\alpha},\beta)}(x) w^{(\bs{\alpha}+\bs{1},\beta+1)}(x){\rm\,d}x,\\%
& = -\int_a^b \DD\left[p_m^{(\bs{\alpha}+\bs{1},\beta+1)}(x)w^{(\bs{\alpha}+\bs{1},\beta+1)}(x)\right] p_n^{(\bs{\alpha},\beta)}(x) {\rm\,d}x,\\%
& = -\int_a^b \left\{\DD\left[p_m^{(\bs{\alpha}+\bs{1},\beta+1)}(x)\right]w^{(\bs{\alpha}+\bs{1},\beta+1)}(x)\right.\\%
&\quad\qquad + p_m^{(\bs{\alpha}+\bs{1},\beta+1)}(x)\DD\left[\bs{u}^{\bs{\alpha}+\bs{1}}(x)\right]\sigma^{\beta+1}(x)w(x)\\%
&\quad\qquad \left. + p_m^{(\bs{\alpha}+\bs{1},\beta+1}(x)\bs{u}^{\bs{\alpha}+\bs{1}}(x)\DD\left[\sigma^{\beta+1}(x) w(x)\right] \right\} p_n^{(\bs{\alpha},\beta)}(x) {\rm\,d}x,\\%
& = -\int_a^b \left\{\DD\left[p_m^{(\bs{\alpha}+\bs{1},\beta+1)}(x)\right]\bs{u}^{\bs{1}}(x)\sigma(x)\right.\\
&\quad\qquad + p_m^{(\bs{\alpha}+\bs{1},\beta+1)}(x)\sum_{i=1}^d(\alpha_i+1)u'_i(x)\prod_{\substack{j=1\\j\ne i}}^du_j(x)\sigma(x)\\
&\quad\qquad \left. + p_m^{(\bs{\alpha}+\bs{1},\beta+1)}(x)\bs{u}^{\bs{1}}(x)(\tau+\beta{\sigma'})\right\} p_n^{(\bs{\alpha},\beta)}(x) w^{(\bs{\alpha},\beta)}(x){\rm\,d}x.
\end{align*}
It follows that the first term is a degree at most $m+\deg(\bs{u}^{\bs{1}})+\max\{\deg(\sigma)-1,\deg(\tau)\}$ polynomial, proving that $(D_{(\bs{\alpha},\beta)}^{(\bs{\alpha}+\bs{1},\beta+1)})_{m,n}=0$ if $n>m+\deg(\bs{u}^{\bs{1}})+1$, by orthogonality of $p_n^{(\bs{\alpha},\beta)}(x)$ with all polynomials of lesser degree. If any of the exponents $\alpha_i$ is zero, then this overestimates the upper bandwidth, as $u_i^0(x) \equiv 1$, no matter how large its degree.
\end{proof}

While Theorem~\ref{theorem:bandedderivatives} shows that these classes of orthogonal polynomials have banded derivatives, it does not provide an algorithm for computations. In fact, there are many different formulas to compute these based on connection coefficients. For example, if:
\[
{\bf P}^{(\bs{\alpha},\beta)}(x) = {\bf P}^{(\bs{\gamma},\delta)}(x)R_{(\bs{\alpha},\beta)}^{(\bs{\gamma},\delta)},
\]
then it is true that:
\begin{equation}
D_{(\bs{\alpha},\beta)}^{(\bs{\alpha}+\lambda(\bs{\alpha}),\beta+1)} = R_{(\bs{\gamma}+\lambda(\bs{\gamma}),\delta+1)}^{(\bs{\alpha}+\lambda(\bs{\alpha}),\beta+1)} D_{(\bs{\gamma},\delta)}^{(\bs{\gamma}+\lambda(\bs{\gamma}),\delta+1)} R_{(\bs{\alpha},\beta)}^{(\bs{\gamma},\delta)}.
\end{equation}
If $\bs{\gamma} = \bs{0}$, then $D_{(\bs{\gamma},\delta)}^{(\bs{\gamma}+\lambda(\bs{\gamma}),\delta+1)}$ is a classical orthogonal polynomial differentiation matrix.

In the proof of Theorem~\ref{theorem:bandedderivatives}, we found that:
\[
\DD(\sigma^{\beta+1}w) = (\tau+\beta{\sigma'})\sigma^\beta w,
\]
which shows that $\sigma^\beta(x)w(x)$ is also a classical weight. We include $\sigma^\beta(x)$ to enable convenient notation for higher order derivatives.

Without loss of generality, we discuss the banded higher order derivatives of rationally modified orthonormal polynomials. To represent a rational classical weight modification, suffice it to take $d=2$ and identify $\bs{u}(x) = (u(x), v(x))$ and $\bs{\alpha} = (1,-1)$. Then:
\[
\DD {\bf P}^{(1,-1,0)}(x) = {\bf P}^{(2,0,1)}(x)D_{(1,-1,0)}^{(2,0,1)}.
\]
It follows that:
\[
D_{(1,-1,0)}^{(2,0,1)} = R_{(0,0,1)}^{(2,0,1)} D_{(0,0,0)}^{(0,0,1)} (R_{(0,0,0)}^{(1,-1,0)})^{-1}.
\]
This formula finds the banded derivative $D_{(1,-1,0)}^{(2,0,1)}$ in terms of a polynomial weight modification $R_{(0,0,1)}^{(2,0,1)}$, the classical derivative $D_{(0,0,0)}^{(0,0,1)}$, and the inverse of the rational weight modification $R_{(0,0,0)}^{(1,-1,0)}$, all computable or well-approximable matrices with the connection problems described in Table~\ref{table:MOPs}. Subsequent differentiation is natural:
\[
\DD {\bf P}^{(k+1,0,k)}(x) = {\bf P}^{(k+2,0,k+1)}(x)D_{(k+1,0,k)}^{(k+2,0,k+1)},\quad\forall k\in\mathbb{N},
\]
where:
\begin{align*}
D_{(1,0,0)}^{(2,0,1)} & = R_{(0,0,1)}^{(2,0,1)} D_{(0,0,0)}^{(0,0,1)} (R_{(0,0,0)}^{(1,0,0)})^{-1},\\
D_{(k+1,0,k)}^{(k+2,0,k+1)} & = R_{(k+1,0,k)}^{(k+2,0,k+1)} D_{(k,0,k-1)}^{(k+1,0,k)} (R_{(k,0,k-1)}^{(k+1,0,k)})^{-1},\quad\forall k\in\mathbb{N}.
\end{align*}
All of these connection problems are banded with a minimal bandwidth in the sense that they are independent of $v(x)$.

Eq.~\eqref{eq:classicalderivative} offers two more practical interpretations. Firstly, the transpose of the differentiation matrix is the discretization of the weighted negative derivative:
\[
(-\mathcal{D})\left[\sigma(x)w(x){\bf P}'(x)\right] = w(x){\bf P}(x) (D_P^{P'})^\top.
\]
Secondly, an indefinite integral of ${\bf P}'(x)$ is discretized by the Moore--Penrose pseudoinverse~\cite{Golub-Van-Loan-13}:
\[
\int^x {\bf P}'(x){\rm\,d}x = {\bf P}(x)(D_P^{P'})^+.
\]
Since $D_P^{P'}$ has full row rank and only one nontrivial band, $(D_P^{P'})^+$ is a right inverse that is particularly easy to compute: it too only has one nontrivial band, and $(D_P^{P'})^+_{n+1,n} = (D_P^{P'})^{-1}_{n,n+1}$. Both of these properties carry over to the more general setting of Theorem~\ref{theorem:bandedderivatives}.
\begin{corollary}\label{corollary:bandedweightedderivativesandintegrals}
Consider ${\bf P}^{(\bs{\alpha},\beta)}(x)$ to be the same as in Theorem~\ref{theorem:bandedderivatives}.
\begin{enumerate}
\item The weighted negative derivative is:
\begin{align*}
& (-\mathcal{D})\left[w^{(\bs{\alpha}+\lambda(\bs{\alpha}),\beta+1)}(x){\bf P}^{(\bs{\alpha}+\lambda(\bs{\alpha}),\beta+1)}(x)\right]\\
& = w^{(\bs{\alpha},\beta)}(x){\bf P}^{(\bs{\alpha},\beta)}(x) (D_{(\bs{\alpha},\beta)}^{(\bs{\alpha}+\lambda(\bs{\alpha}),\beta+1)})^\top,
\end{align*}
\item and an indefinite integral is:
\[
\int^x {\bf P}^{(\bs{\alpha}+\lambda(\bs{\alpha}),\beta+1)}(x){\rm\,d}x = {\bf P}^{(\bs{\alpha},\beta)}(x)(D_{(\bs{\alpha},\beta)}^{(\bs{\alpha}+\lambda(\bs{\alpha}),\beta+1)})^+.
\]
\end{enumerate}
\end{corollary}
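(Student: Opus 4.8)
The plan is to prove both parts entrywise, column by column, reducing each to an integration-by-parts computation against the orthonormal family ${\bf P}^{(\bs{\alpha},\beta)}(x)$. Throughout I abbreviate $D := D_{(\bs{\alpha},\beta)}^{(\bs{\alpha}+\lambda(\bs{\alpha}),\beta+1)}$ and recall from the proof of Theorem~\ref{theorem:bandedderivatives} that, because ${\bf P}^{(\bs{\alpha}+\lambda(\bs{\alpha}),\beta+1)}$ is orthonormal for its weight, the entries of $D$ are the inner products
\[
D_{m,n} = \int_a^b p_m^{(\bs{\alpha}+\lambda(\bs{\alpha}),\beta+1)}(x)\,\DD p_n^{(\bs{\alpha},\beta)}(x)\,w^{(\bs{\alpha}+\lambda(\bs{\alpha}),\beta+1)}(x)\,\dx.
\]

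For part (1), I would fix a column index $n$ and study the function $g_n := w^{(\bs{\alpha}+\lambda(\bs{\alpha}),\beta+1)} p_n^{(\bs{\alpha}+\lambda(\bs{\alpha}),\beta+1)}$. First I would show that $(-\DD) g_n$ divided by $w^{(\bs{\alpha},\beta)}$ is a polynomial. Writing $w^{(\bs{\alpha}+\lambda(\bs{\alpha}),\beta+1)}/w^{(\bs{\alpha},\beta)} = \bs{u}^{\lambda(\bs{\alpha})}\sigma$ and differentiating, this reduces to checking that $\bigl(w^{(\bs{\alpha},\beta)}\bigr)'/w^{(\bs{\alpha},\beta)}$ times $\bs{u}^{\lambda(\bs{\alpha})}\sigma$ is polynomial. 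Expanding the logarithmic derivative as $\sum_i \alpha_i u_i'/u_i + \beta\,\sigma'/\sigma + w'/w$ and inserting the Pearson relation $w'/w = (\tau-\sigma')/\sigma$, every rational term is cleared: the factor $\sigma$ cancels the $\sigma$-denominators, while the indicator $\lambda(\bs{\alpha})$ guarantees $u_i$ appears to first power in $\bs{u}^{\lambda(\bs{\alpha})}$ precisely when $\alpha_i\ne0$, cancelling the $u_i$-denominator (and the offending term carries the prefactor $\alpha_i=0$ otherwise). Once $(-\DD)g_n = w^{(\bs{\alpha},\beta)}\phi_n$ with $\phi_n$ a polynomial, I would expand $\phi_n$ in the orthonormal basis, whose coefficient on $p_m^{(\bs{\alpha},\beta)}$ is $\int_a^b p_m^{(\bs{\alpha},\beta)}\,(-\DD g_n)\,\dx$. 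Integrating by parts and discarding the boundary contribution---which is a polynomial times $w^{(\bs{\alpha}+\lambda(\bs{\alpha}),\beta+1)}$ and so vanishes at $a$ and $b$ by the limit hypothesis of Theorem~\ref{theorem:bandedderivatives}---leaves $\int_a^b (p_m^{(\bs{\alpha},\beta)})'\, p_n^{(\bs{\alpha}+\lambda(\bs{\alpha}),\beta+1)}\, w^{(\bs{\alpha}+\lambda(\bs{\alpha}),\beta+1)}\,\dx = D_{n,m} = (D^\top)_{m,n}$, and assembling the columns yields the stated identity.

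For part (2), I would first record that $D$ has full row rank: its lower bandwidth $-1$ forces $D_{m,n}=0$ for $m\ge n$, while $\DD p_n^{(\bs{\alpha},\beta)}$ has exact degree $n-1$ so that $D_{n-1,n}\ne0$; hence the leftmost nonzero entry of row $m$ lies in column $m+1$, producing a staircase pattern with linearly independent rows. Full row rank makes the Moore--Penrose pseudoinverse a genuine right inverse, $D D^+ = I$. I would then differentiate the candidate antiderivative and substitute the differentiation relation of Theorem~\ref{theorem:bandedderivatives}:
\[
\DD\bigl[{\bf P}^{(\bs{\alpha},\beta)}(x)\, D^+\bigr] = \bigl(\DD {\bf P}^{(\bs{\alpha},\beta)}(x)\bigr) D^+ = {\bf P}^{(\bs{\alpha}+\lambda(\bs{\alpha}),\beta+1)}(x)\, D D^+ = {\bf P}^{(\bs{\alpha}+\lambda(\bs{\alpha}),\beta+1)}(x),
\]
so ${\bf P}^{(\bs{\alpha},\beta)} D^+$ is an antiderivative of ${\bf P}^{(\bs{\alpha}+\lambda(\bs{\alpha}),\beta+1)}$, which is exactly the asserted indefinite-integral identity.

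I expect the main obstacle to be the polynomiality claim in part (1): showing that $(-\DD)g_n/w^{(\bs{\alpha},\beta)}$ is a polynomial rather than merely a rational function. This is where the precise design of the multi-index shift $\lambda(\bs{\alpha})$ is essential---the $+1$ in $\beta+1$ clears the $\sigma$-denominators arising from $\sigma^\beta$ and from the Pearson relation, and the indicator structure of $\lambda(\bs{\alpha})$ clears exactly those $u_i$-denominators that are actually present (those with $\alpha_i\ne0$). The remainder is bookkeeping: the boundary terms are controlled verbatim by the hypothesis of Theorem~\ref{theorem:bandedderivatives}, and the pseudoinverse computation in part (2) is routine once full row rank is established.
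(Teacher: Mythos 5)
Your proposal is correct, and it supplies details the paper itself omits: the paper simply asserts that the two properties ``carry over'' from the classical case of Eq.~\eqref{eq:classicalderivative} and states the corollary without proof. Your argument --- the Pearson-relation/indicator-structure computation showing $(-\mathcal{D})\bigl[w^{(\bs{\alpha}+\lambda(\bs{\alpha}),\beta+1)}p_n^{(\bs{\alpha}+\lambda(\bs{\alpha}),\beta+1)}\bigr]/w^{(\bs{\alpha},\beta)}$ is polynomial, the integration by parts with boundary terms killed by the limit hypothesis of Theorem~\ref{theorem:bandedderivatives}, and the staircase/full-row-rank argument making $D^+$ a right inverse --- is exactly the adjoint-style computation the paper's proof of Theorem~\ref{theorem:bandedderivatives} and its subsequent discussion of the pseudoinverse presuppose, so it is the intended route rather than a genuinely different one.
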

While $D_{(\bs{\alpha},\beta)}^{(\bs{\alpha}+\lambda(\bs{\alpha}),\beta+1)}$ in general has a larger upper bandwidth than $D_P^{P'}$, its full row rank enables direct computation of successive columns of its pseudoinverse, preserving the lower bandwidth of $1$. In particular, we may take the first row of $(D_{(\bs{\alpha},\beta)}^{(\bs{\alpha}+\lambda(\bs{\alpha}),\beta+1)})^+$ to be zero, and for the rest:
\[
(D_{(\bs{\alpha},\beta)}^{(\bs{\alpha}+\lambda(\bs{\alpha}),\beta+1)})^+_{2:n+1,1:n} = (D_{(\bs{\alpha},\beta)}^{(\bs{\alpha}+\lambda(\bs{\alpha}),\beta+1)})_{1:n,2:n+1}^{-1}.
\]
Combining Theorem~\ref{theorem:bandedderivatives} and Corollary~\ref{corollary:bandedweightedderivativesandintegrals} is now irresistible.
\begin{theorem}
Consider ${\bf P}^{(\bs{\alpha},\beta)}(x)$ to be the same as in Theorem~\ref{theorem:bandedderivatives}.
\begin{align*}
& (-\mathcal{D})\left[w^{(\bs{\alpha}+\lambda(\bs{\alpha}),\beta+1)}(x)\mathcal{D}\right]{\bf P}^{(\bs{\alpha},\beta)}(x)\\
& = w^{(\bs{\alpha},\beta)}(x){\bf P}^{(\bs{\alpha},\beta)}(x)(D_{(\bs{\alpha},\beta)}^{(\bs{\alpha}+\lambda(\bs{\alpha}),\beta+1)})^\top D_{(\bs{\alpha},\beta)}^{(\bs{\alpha}+\lambda(\bs{\alpha}),\beta+1)}.
\end{align*}
\end{theorem}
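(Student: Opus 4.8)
The plan is to chain the two identities already in hand—the banded differentiation relation of Theorem~\ref{theorem:bandedderivatives} and the weighted negative derivative of Corollary~\ref{corollary:bandedweightedderivativesandintegrals}(1)—applying them to the inner and outer differential operators respectively. To lighten notation I would write $D := D_{(\bs{\alpha},\beta)}^{(\bs{\alpha}+\lambda(\bs{\alpha}),\beta+1)}$ for the banded differentiation matrix and abbreviate the two weight--basis pairs as $(w_0,{\bf P}_0) := (w^{(\bs{\alpha},\beta)},{\bf P}^{(\bs{\alpha},\beta)})$ and $(w_1,{\bf P}_1) := (w^{(\bs{\alpha}+\lambda(\bs{\alpha}),\beta+1)},{\bf P}^{(\bs{\alpha}+\lambda(\bs{\alpha}),\beta+1)})$. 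The goal is then to establish $(-\mathcal{D})[w_1\mathcal{D}]{\bf P}_0 = w_0{\bf P}_0 D^\top D$.

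First I would apply the inner operator. By Theorem~\ref{theorem:bandedderivatives}, $\mathcal{D}{\bf P}_0 = {\bf P}_1 D$, so that $w_1\mathcal{D}{\bf P}_0 = w_1{\bf P}_1 D$. Because $D$ is a constant (degree-independent, infinite-dimensional) matrix, it is inert under $x$-differentiation and factors to the right of the outer operator: $(-\mathcal{D})[w_1{\bf P}_1 D] = \bigl((-\mathcal{D})[w_1{\bf P}_1]\bigr)D$. Applying Corollary~\ref{corollary:bandedweightedderivativesandintegrals}(1), which states $(-\mathcal{D})[w_1{\bf P}_1] = w_0{\bf P}_0 D^\top$, and substituting yields $(-\mathcal{D})[w_1\mathcal{D}]{\bf P}_0 = w_0{\bf P}_0 D^\top D$, the claimed identity.

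The argument is essentially a two-line composition, so there is no substantive analytic obstacle; the only points requiring care are bookkeeping. First, one must confirm that factoring the constant matrix $D$ past the differential operator is legitimate, which is immediate since each entry of $D$ is a scalar independent of $x$ and the product rule introduces no additional terms. Second, the weighted negative derivative identity of the Corollary rests on an integration by parts whose boundary terms must vanish; these are annihilated precisely by the limit hypotheses imposed in Theorem~\ref{theorem:bandedderivatives} on $w^{(\bs{\alpha}+\lambda(\bs{\alpha}),\beta+1)}$, and since the present theorem inherits exactly the setting of that theorem, the conditions are already in force. I therefore expect no additional regularity or decay assumptions to be needed, and the result to be the natural self-adjoint closure of the banded differentiation and weighted-derivative relations: the operator $(-\mathcal{D})[w_1\mathcal{D}]$ is discretized, in the ${\bf P}_0$ basis weighted by $w_0$, by the positive semi-definite matrix $D^\top D$, generalizing Eq.~\eqref{eq:COPSAEIG} to the modified setting.
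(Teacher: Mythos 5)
Your proof is correct and is precisely the argument the paper intends: the paper offers no written proof beyond remarking that combining Theorem~\ref{theorem:bandedderivatives} with Corollary~\ref{corollary:bandedweightedderivativesandintegrals} is ``irresistible,'' and your two-step composition (apply the banded derivative relation, pull the constant matrix $D$ through the outer derivative, then apply the weighted negative derivative identity) is exactly that combination. Your added remarks on the legitimacy of factoring $D$ out and on the boundary conditions inherited from Theorem~\ref{theorem:bandedderivatives} are accurate and, if anything, more careful than the paper itself.
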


\section{Algorithms for infinite-dimensional matrix factorizations}\label{section:infinitealgorithms}

As mentioned above, there is an important distinction between Cholesky and $QR$ decompositions of infinite-dimensional matrices and their reverse Cholesky and $QL$ decompositions: Finite sections of infinite-dimensional Cholesky and $QR$ decompositions are computable while in the $QL$ and reverse Cholesky factorizations, matrix factorizations are employed that in general are not computable (with a finite number of operations) for infinite-dimensional matrices. In finite dimensions, a $QL$ factorization may proceed by orthogonal transformations designed to lower-triangularize the matrix in question. It follows that these orthogonal transformations must begin in the bottom right corner but in infinite dimensions, the bottom right corner is always out of reach, see \cite{Webb-Thesis-17}. A similar argument follows for the reverse Cholesky factorization. In finite dimensions, symmetric elementary transformations introduce zeros starting with the last row and column.

Since we only require the $n\times n$ principal section of the connection coefficients, we develop iterative methods to compute the approximate infinite-dimensional $QL$ and reverse Cholesky factorizations based on larger principal sections. Let $N > n$ and partition $V$ as follows:
\[
V = \begin{pmatrix} V_N & V_b\\ V_b^\top & V_\infty\end{pmatrix}.
\]
Here, $V_N$ is the $N\times N$ principal section of $V$. It inherits the symmetric positive-definite and banded structure from $V$. Next, $V_b$ is an $N\times \infty$ block with a rank bounded by the bandwidth of $V$ and sparsity pattern of a $b\times b$ lower-triangular matrix in the first $b$ columns and the last $b$ rows. Finally, $V_\infty$ is the trailing infinite-dimensional section. 

\subsection{Approximating the $QL$ factorization}

We compute the $QL$ factorization of $V_N$ and we wish to analyse the proximity of its $n\times n$ principal section to the same sized section of the infinite-dimensional $QL$ factorization of a nearby matrix to $V$; thus, we shall make a perturbative argument. Recall that $P_n$ is the canonical orthogonal projection of Eq.~\eqref{eq:canonicalorthogonalprojection}. We compute $V_N = Q_N L_N$ and consider:
\[
\begin{pmatrix} Q_N^\top\\ & I\end{pmatrix} V = \begin{pmatrix} L_N & Q_N^\top V_b\\ V_b^\top & V_\infty\end{pmatrix}.
\]
In the first $n$ rows, the orthogonal transformation $Q_N^\top$ has successfully lower triangularized the first $N$ columns of $V$. However, in the next $b$ columns, the entries $P_n Q_N^\top V_b$ interfere with a complete lower triangularization. Were they $0$, we would declare victory as the orthogonal lower triangularization would have succeeded. In general, this is impossible and yet we notice that in practice for $N\gg n$ these entries may be $2$-normwise small. Thus, we conclude with the following criterion for convergence of principal sections of the $QL$ factorization.

\begin{lemma}\label{lemma:QLperturbation}
Given $\epsilon>0$, if:
\[
\|P_nQ_N^\top V_b\|_2 < \epsilon \|Q_N^\top V_b\|_2 = \epsilon\|V_b\|_2,
\]
then there exists a $\tilde{V}_b$ such that:
\begin{align*}
P_n Q_N^\top\tilde{V}_b & = 0,\quad {\rm and}\\
\|V_b-\tilde{V}_b\|_2 & < \epsilon\|V_b\|_2.
\end{align*}
\end{lemma}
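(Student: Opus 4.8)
The plan is to recognize this as a finite-dimensional statement about the single block $W := Q_N^\top V_b$ and to produce $\tilde V_b$ by simply deleting the rows of $W$ that obstruct lower triangularization. First I would exploit that $Q_N$ is an $N\times N$ orthogonal matrix, so that left multiplication by $Q_N$ or $Q_N^\top$ preserves the $2$-norm; in particular $\|W\|_2 = \|Q_N^\top V_b\|_2 = \|V_b\|_2$, which is exactly the identity built into the hypothesis. Setting $\tilde W := Q_N^\top \tilde V_b$, the requirement $P_n Q_N^\top \tilde V_b = 0$ is equivalent to $P_n \tilde W = 0$, i.e.\ the first $n$ rows of $\tilde W$ must vanish. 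Thus the whole problem reduces to: given $W$ with $\|P_n W\|_2 < \epsilon\|W\|_2$, find $\tilde W$ that agrees with $W$ outside its first $n$ rows and is close to $W$ in $2$-norm.

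The natural candidate is the obvious one: zero out the offending rows by taking $\tilde W := (I - P_n) W$, and then undo the orthogonal rotation by setting
\[
\tilde V_b := Q_N (I - P_n) Q_N^\top V_b.
\]
This choice is essentially forced, since $I-P_n$ is precisely the projection complementary to the block $P_n$ responsible for the interference, and conjugating by $Q_N$ returns us to the original coordinates in which $\tilde V_b$ lives.

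It then remains to verify the two claimed properties, and both follow from purely algebraic manipulations. For the first, using $Q_N^\top Q_N = I$ together with the idempotency $P_n^2 = P_n$,
\[
P_n Q_N^\top \tilde V_b = P_n (I - P_n) Q_N^\top V_b = 0.
\]
For the second, using $Q_N Q_N^\top = I$ and the fact that $Q_N$ preserves the $2$-norm,
\[
\|V_b - \tilde V_b\|_2 = \|Q_N P_n Q_N^\top V_b\|_2 = \|P_n Q_N^\top V_b\|_2 < \epsilon \|V_b\|_2,
\]
where the final inequality is exactly the hypothesis. I expect no genuine obstacle here beyond bookkeeping: the only subtlety is to keep careful track of which identity ($Q_N^\top Q_N = I$ for the first property versus $Q_N Q_N^\top = I$ for the second) is invoked, and to note that because $V_b$ has infinitely many columns one must regard $W = Q_N^\top V_b$ as an $N\times\infty$ block and let $P_n$ act on its finitely many rows rather than its columns. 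The orthogonality of the finite matrix $Q_N$ makes all of the norm identities exact irrespective of the infinite column dimension, so the estimate is sharp and the construction is explicit.
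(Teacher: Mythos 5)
Your construction $\tilde V_b = Q_N(I-P_n)Q_N^\top V_b$ and the two verifications are exactly the paper's own proof, down to the identities used at each step. Correct, and essentially identical in approach.
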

\begin{proof}
We choose $\tilde{V}_b = Q_N(I_N-P_n)Q_N^\top V_b$. Then:
\[
P_nQ_N^\top\tilde{V}_b = P_nQ_N^\top Q_N(I_N-P_n)Q_N^\top V_b  = P_n(I_N-P_n)Q_N^\top V_b = 0,
\]
and:
\begin{align*}
V_b - \tilde{V}_b & = \left[I_N - Q_N(I_N-P_n)Q_N^\top\right]V_b\\
& = Q_N P_n Q_N^\top V_b,
\end{align*}
and the result follows.
\end{proof}
Since $\|V_b\|_2 \le \|V\|_2$, Lemma~\ref{lemma:QLperturbation} suggests that introducing a $2$-normwise small relative perturbation in $V_b$ is sufficient to result in an $n$-row lower triangularization of a $\tilde{V}$ by the orthogonal transformation $Q_N^\top$. Moreover, as the perturbation is localized to the $V_b$ block, the statement is in fact stronger than:
\[
\|V-\tilde{V}\|_2 < \epsilon \|V\|_2.
\]
It may not be immediately obvious, but Lemma~\ref{lemma:QLperturbation} guarantees that for every $M>N$, if $\tilde{V}_M = Q_ML_M$, then the first $n$ rows of $L_M$ coincide with those of $L_N$, and the first $n$ columns of $Q_M$ are also the first $n$ columns of $Q_N$. Since $L_M$ and $L_N$ are lower-triangular matrices, the first $n\times n$ principal sections are the same. Similarly, the $N\times n$ principal sections of $Q_M$ and $Q_N$ are same. If $N$ is sufficiently large, then $\tilde{V}$ is $2$-normwise close to $V$ and we have computed the exact partial $QL$ factorization of a nearby matrix.

Our algorithm is as follows: given $\epsilon>0$, we begin by setting $N = 2n$, we compute $V_N = Q_NL_N$, and we double $N$ until we may invoke Lemma~\ref{lemma:QLperturbation} and continue with the finite-dimensional aspect of the computations involved in the $QL$ factorization. As there is no universal guarantee for this to occur, the software implementation of our algorithm issues a warning if $N$ reaches a huge maximum value.

\subsection{Approximating the reverse Cholesky factorization}

As above, we compute the reverse Cholesky factorization of $V_N$, and we make a perturbative argument to consider the utility of its $n\times n$ principal section. With $V_N = L_N^\top L_N$ in hand:
\[
\begin{pmatrix} L_N^{-\top}\\ & I\end{pmatrix} V = \begin{pmatrix} L_N & L_N^{-\top} V_b\\ V_b^\top & V_\infty\end{pmatrix},
\]
and we wish to consider how closely we have lower-triangularized the first $n$ rows.

\begin{lemma}\label{lemma:reverseCholeskyperturbation}
Given $\epsilon>0$, if:
\[
\|L_N^\top P_nL_N^{-\top} V_b\|_2 < \epsilon\|V_b\|_2,
\]
then there exists a $\tilde{V}_b$ such that:
\begin{align*}
P_n L_N^{-\top}\tilde{V}_b & = 0,\quad {\rm and}\\
\|V_b-\tilde{V}_b\|_2 & < \epsilon\|V_b\|_2.
\end{align*}
\end{lemma}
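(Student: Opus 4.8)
The plan is to mirror the construction in the proof of Lemma~\ref{lemma:QLperturbation}, with the orthogonal factor $Q_N$ replaced by the triangular factor $L_N^{-\top}$. First I would define the perturbed block by projecting away the top $n$ rows \emph{after} the reverse-Cholesky transformation, namely $\tilde{V}_b = L_N^\top (I_N - P_n) L_N^{-\top} V_b$. This is the direct analogue of the choice $\tilde{V}_b = Q_N(I_N - P_n)Q_N^\top V_b$ used previously, with $Q_N^\top$ and $Q_N$ replaced by $L_N^{-\top}$ and its inverse $L_N^\top$ respectively.

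To verify the first claim $P_n L_N^{-\top}\tilde{V}_b = 0$, I would substitute the definition and use $L_N^{-\top} L_N^\top = I_N$, which collapses the product to $P_n (I_N - P_n) L_N^{-\top} V_b$; this vanishes because $P_n$ is idempotent. For the norm bound I would compute $V_b - \tilde{V}_b$ directly: writing $V_b = L_N^\top L_N^{-\top} V_b$ and factoring out $L_N^\top(\cdot)L_N^{-\top}$ collapses the difference to exactly $V_b - \tilde{V}_b = L_N^\top P_n L_N^{-\top} V_b$, so the hypothesis $\|L_N^\top P_nL_N^{-\top} V_b\|_2 < \epsilon\|V_b\|_2$ gives $\|V_b - \tilde{V}_b\|_2 < \epsilon\|V_b\|_2$ immediately. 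Both steps are short algebraic manipulations.

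The one genuine departure from the $QL$ argument, and the step to watch, is that $L_N^{-\top}$ is \emph{not} an isometry. In the orthogonal case one simplifies $\|Q_N P_n Q_N^\top V_b\|_2 = \|P_n Q_N^\top V_b\|_2$, so the convergence criterion can be phrased purely in terms of $\|P_n Q_N^\top V_b\|_2$. Here no such reduction is available: the residual equals $L_N^\top P_n L_N^{-\top} V_b$ and cannot be replaced by $\|P_n L_N^{-\top} V_b\|_2$ without incurring a factor depending on the conditioning of $L_N$. The key observation that keeps the estimate sharp is therefore to state the hypothesis directly in terms of $\|L_N^\top P_n L_N^{-\top} V_b\|_2$ --- which is precisely the quantity the residual equals --- rather than attempting any further bound involving $\|L_N\|_2$ and $\|L_N^{-1}\|_2$.
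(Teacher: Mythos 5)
Your construction $\tilde{V}_b = L_N^\top(I_N-P_n)L_N^{-\top}V_b$ and the two verifications are exactly the paper's proof, including the observation that the hypothesis is phrased directly in terms of the residual $L_N^\top P_n L_N^{-\top} V_b$ because $L_N^{-\top}$ is not an isometry. Correct and essentially identical to the paper's argument.
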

\begin{proof}
We choose $\tilde{V}_b = L_N^\top(I_N-P_n)L_N^{-\top} V_b$. Then:
\[
P_nL_N^{-\top}\tilde{V}_b = P_nL_N^{-\top} L_N^\top(I_N-P_n)L_N^{-\top} V_b  = P_n(I_N-P_n)L_N^{-\top} V_b = 0,
\]
and:
\begin{align*}
V_b - \tilde{V}_b & = \left[I_N - L_N^\top(I_N-P_n)L_N^{-\top}\right]V_b\\
& = L_N^\top P_n L_N^{-\top} V_b,
\end{align*}
and the result follows.
\end{proof}
Since $\|V_b\|_2 \le \|V\|_2$, Lemma~\ref{lemma:reverseCholeskyperturbation} suggests that introducing a $2$-normwise small relative perturbation in $V_b$ is sufficient to result in an $n$-row lower triangularization of $V$ by $L_N^{-\top}$. Our algorithm is as follows: given $\epsilon>0$, we begin by setting $N = 2n$, we compute $V_N=L_N^\top L_N$, and we double $N$ until we may invoke Lemma~\ref{lemma:reverseCholeskyperturbation} and continue with the finite-dimensional aspect of the computations involved in the reverse Cholesky factorization.

\subsection{A model for infinite-dimensional matrix factorizations}

We consider a model problem of a rational modification describing a single simple pole off $[-1,1]$:
\[
v(x) = \alpha+2\beta x.
\]
If this modifies the (orthonormal) Chebyshev polynomials of the second-kind, then:
\[
V = \begin{pmatrix} \alpha & \beta\\ \beta & \alpha & \ddots\\ & \ddots & \ddots\end{pmatrix},
\]
and $|\alpha|> 2|\beta|$. This symmetric tridiagonal matrix is also Toeplitz~\cite[\S 4.7]{Golub-Van-Loan-13}, which allows us to draw from the literature on the $QL$ and Wiener--Hopf factorizations of Toeplitz matrices.

\subsubsection{$QL$ factorizations}

Let $G_i(c, s)$ denote the real Givens rotation matrix in the $e_ie_{i+1}$-plane embedded in the infinite-dimensional identity:
\[
G_i(c, s) = \begin{pmatrix} I_{i-1}\\ & c & s\\ & -s & c\\ & & & I\end{pmatrix},
\]
where $s^2+c^2=1$.
\begin{lemma}[Theorem 5.2.6 in~\cite{Webb-Thesis-17}]\label{lemma:infinitedimensionalQL}
If $\alpha > 2|\beta|$, the $QL$ factorization of $V$ exists and is given by:
\[
V = \prod_{i=1}^\infty G_i\left(c, s\right) \begin{pmatrix} \sqrt{\frac{(\alpha+\sqrt{\alpha^2-(2\beta)^2})\sqrt{\alpha^2-(2\beta)^2}}{2}}\\ 2\beta & \frac{\alpha+\sqrt{\alpha^2-(2\beta)^2}}{2}\\ \frac{2\beta^2}{\alpha+\sqrt{\alpha^2-(2\beta)^2}} & 2\beta & \ddots\\ & \ddots & \ddots\end{pmatrix},
\]
where:
\[
s = \frac{2\beta}{\alpha+\sqrt{\alpha^2-(2\beta)^2}},\quad{\rm and}\quad c = \sqrt{\frac{2\sqrt{\alpha^2-(2\beta)^2}}{\alpha+\sqrt{\alpha^2-(2\beta)^2}}},
\]
and the Givens rotations are applied to the lower triangular matrix in the order determined by their index; that is, the infinite product iteratively prepends Givens rotations on the left.
\end{lemma}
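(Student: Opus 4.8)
The plan is to verify the claimed $QL$ factorization of the symmetric tridiagonal Toeplitz matrix $V$ directly, by exhibiting the lower-triangular Toeplitz factor $L$ and showing that $V L^{-1}$ is the infinite product of identical Givens rotations $\prod_{i=1}^\infty G_i(c,s)$. The key algebraic device is to recognize the factorization through the symbol of the Toeplitz operator. Writing the symbol of $V$ as $a(z) = \beta z^{-1} + \alpha + \beta z$ on the unit circle, I would seek a scalar Wiener--Hopf-type factorization $a(z) = \overline{g(z)}\,\ell(z)$ where $\ell(z) = \ell_0 + \ell_{-1}z^{-1} + \cdots$ is analytic outside the disk (so its operator is lower-triangular Toeplitz) and $g(z)$ is the symbol of a unitary (here, the limiting Givens-rotation product). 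The condition $\alpha > 2|\beta|$ is exactly what guarantees the two roots of $\beta + \alpha z + \beta z^2 = 0$ split strictly across the unit circle, so such a factorization exists with the required triangular structure.

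Concretely, first I would factor the quadratic: the roots of $\beta z^2 + \alpha z + \beta$ are $z_\pm = \frac{-\alpha \pm \sqrt{\alpha^2 - 4\beta^2}}{2\beta}$, with $|z_-|<1<|z_+|$ under $\alpha > 2|\beta|$. This produces the decomposition controlling both the diagonal entry $\frac{\alpha + \sqrt{\alpha^2-(2\beta)^2}}{2}$ of $L$ and the subdiagonal geometric tail $\frac{2\beta^2}{\alpha + \sqrt{\alpha^2-(2\beta)^2}}$; indeed these are precisely the coefficients generated by expanding the ``outer'' factor. Second, I would pin down $c$ and $s$ by demanding $s^2 + c^2 = 1$ together with the requirement that a single Givens rotation $G_i(c,s)$, applied from the left, annihilate the appropriate super-diagonal entry while propagating the Toeplitz structure down one row; this forces $s = \frac{2\beta}{\alpha + \sqrt{\alpha^2-(2\beta)^2}}$, and then $c$ follows from the Pythagorean relation, matching the stated $c = \sqrt{\frac{2\sqrt{\alpha^2-(2\beta)^2}}{\alpha + \sqrt{\alpha^2-(2\beta)^2}}}$. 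Third, I would confirm that the corner entry $\sqrt{\tfrac{(\alpha+\sqrt{\alpha^2-(2\beta)^2})\sqrt{\alpha^2-(2\beta)^2}}{2}}$ of $L$ is the one special value needed to close the factorization at the first row, since translation invariance is broken only there.

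The main obstacle is the convergence and well-definedness of the infinite product $\prod_{i=1}^\infty G_i(c,s)$ as a genuine orthogonal operator on $\ell^2$, and the justification that prepending rotations indefinitely converges in the strong operator topology to an operator $Q$ with $Q^\top Q = QQ^\top = I$. Because each $G_i$ acts nontrivially only in the $e_i e_{i+1}$-plane, the partial products agree on any fixed initial segment once $i$ is large enough, so columns of the partial products stabilize; I would exploit this locality to establish entrywise convergence and then verify the isometry property on $\ell^0$ before extending by density. Here the strict inequality $\alpha > 2|\beta|$ (hence $|s| < 1$) is essential to keep the rotations uniformly bounded away from the degenerate case and to guarantee that the geometric subdiagonal tail of $L$ lies in $\ell^2$, so that $L$ is a bona fide bounded lower-triangular operator with bounded inverse; invertibility of $V$ under $\alpha>2|\beta|$ then places us squarely in the regime of Proposition~\ref{prop:QL}, which supplies existence and uniqueness of the $QL$ factorization, reducing the task to checking that the explicit $Q$ and $L$ above are that factorization.
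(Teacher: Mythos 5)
Your route is genuinely different from the paper's. The paper's proof is a bare-hands verification: it parameterizes the sub-subdiagonal of $L$ by $s$, computes $G_1L$ and then $G_2G_1L$ explicitly, observes that the first two rows of $G_2G_1L$ agree with those of $V$ while the trailing $2:\infty\times 2:\infty$ section of $G_2G_1L$ equals $G_1L$, and closes by induction on this shift-invariance. No symbols or Wiener--Hopf theory appear; the only analytic input is that $|s|<1$ when $\alpha>2|\beta|$ (the paper even remarks that at $\alpha=2|\beta|$ the product degenerates to a non-surjective isometry). Your symbol-based derivation is attractive because it \emph{explains} where the Toeplitz part of $L$ comes from rather than pulling it out of a hat: the correct identity is $V^\top V = L^\top L$, so one factors $a(z)^2=\ell(z^{-1})\ell(z)$ with $a(z)=\beta z^{-1}+\alpha+\beta z$, and the root $z_-=-2\beta/(\alpha+\sqrt{\alpha^2-4\beta^2})$ inside the unit circle generates exactly the three claimed bands of $L$, with the $(1,1)$ entry as the finite-rank corner correction you note. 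The paper's approach buys a short, self-contained, entrywise-checkable argument; yours buys conceptual continuity with the discrete Wiener--Hopf factorization that the paper itself invokes for the reverse Cholesky case.

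Two cautions. First, your framing $a(z)=\overline{g(z)}\,\ell(z)$ with ``$g$ the symbol of the limiting Givens product'' is not literally meaningful: the operator $\cdots G_2G_1$ is lower Hessenberg but \emph{not} Toeplitz (its first row and column differ from the interior pattern), so it has no symbol; you should route the argument through $V^\top V=L^\top L$ instead. Second, your step that a single rotation ``annihilates the appropriate super-diagonal entry while propagating the Toeplitz structure down one row'' is precisely the computation that constitutes the paper's entire proof; as written you assert it rather than perform it, so to be complete you would still need to carry out the two explicit rotation products (or an equivalent verification that $VL^{-1}$ is the stated orthogonal operator) before the induction can be invoked.
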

\begin{proof}
We parameterize some of the entries of $L$ by $s$ and $c$. We apply $G_1$ to $L$:
\begin{align*}
& G_1\left(c, s\right) \begin{pmatrix} \sqrt{\frac{(\alpha+\sqrt{\alpha^2-(2\beta)^2})\sqrt{\alpha^2-(2\beta)^2}}{2}}\\ 2\beta & \frac{\alpha+\sqrt{\alpha^2-(2\beta)^2}}{2}\\ s\beta & 2\beta & \ddots\\ & \ddots & \ddots\end{pmatrix}\\
& = \begin{pmatrix} \alpha & \beta\\ \beta\sqrt{\frac{2\sqrt{\alpha^2-(2\beta)^2}}{\alpha+\sqrt{\alpha^2-(2\beta)^2}}} & \sqrt{\frac{(\alpha+\sqrt{\alpha^2-(2\beta)^2})\sqrt{\alpha^2-(2\beta)^2}}{2}}\\ s\beta & 2\beta & \frac{\alpha+\sqrt{\alpha^2-(2\beta)^2}}{2}\\ & \ddots & \ddots & \ddots\end{pmatrix},\\
& = \begin{pmatrix} \alpha & \beta\\ c\beta & \sqrt{\frac{(\alpha+\sqrt{\alpha^2-(2\beta)^2})\sqrt{\alpha^2-(2\beta)^2}}{2}}\\ s\beta & 2\beta & \frac{\alpha+\sqrt{\alpha^2-(2\beta)^2}}{2}\\ & \ddots & \ddots & \ddots\end{pmatrix}.
\end{align*}
Then, applying $G_2$ to the result, we find:
\begin{align*}
& G_2\left(c, s\right)\begin{pmatrix} \alpha & \beta\\ c\beta & \sqrt{\frac{(\alpha+\sqrt{\alpha^2-(2\beta)^2})\sqrt{\alpha^2-(2\beta)^2}}{2}}\\ s\beta & 2\beta & \frac{\alpha+\sqrt{\alpha^2-(2\beta)^2}}{2}\\ & \ddots & \ddots & \ddots\end{pmatrix}\\
& = \begin{pmatrix} \alpha & \beta\\ \beta & \alpha & \beta\\ 0 & c\beta & \sqrt{\frac{(\alpha+\sqrt{\alpha^2-(2\beta)^2})\sqrt{\alpha^2-(2\beta)^2}}{2}}\\ & s\beta & 2\beta & \frac{\alpha+\sqrt{\alpha^2-(2\beta)^2}}{2}\\ & & \ddots & \ddots & \ddots\end{pmatrix}.
\end{align*}
We have now shown that the first two rows of $G_2G_1L$ are equal to the first two rows of $V$, and since the $2:\infty \times 2:\infty$ section of $G_2G_1L$ is equal to $G_1L$, the full factorization follows by induction.
\end{proof}
If $\alpha = 2|\beta|$, then the infinite product of Givens rotations with $|s|=1$ and $c=0$ is an isometry but it is not orthogonal as every entry in the first column is zero.

Next, we compare this result to the computable factorization $V_N = Q_NL_N$. 

\begin{lemma}\label{lemma:finitedimensionalQL}
If $\alpha>2\beta>0$, the $QL$ factorization of $V_N$ is given by:
\[
V_N = \prod_{i=1}^{N-1}G_i(c_i, s_i) L_N,
\]
where:
\begin{equation}\label{eq:structuredsinesandcosines}
s_{N-k} = \sqrt{\frac{\displaystyle\sum_{j=1}^k a_j^2}{\displaystyle\sum_{j=1}^{k+1}a_j^2}}, \quad{\rm and}\quad c_{N-k} = \sqrt{\frac{\displaystyle a_{k+1}^2}{\displaystyle\sum_{j=1}^{k+1}a_j^2}},
\end{equation}
where:
\begin{equation}\label{eq:geometricaj}
a_j = \frac{\beta^2}{\sqrt{\alpha^2-(2\beta)^2}}\left[\left(\frac{\alpha+\sqrt{\alpha^2-(2\beta)^2}}{2\beta}\right)^j - \left(\frac{2\beta}{\alpha+\sqrt{\alpha^2-(2\beta)^2}}\right)^j\right],
\end{equation}
and where:
\begin{align}
(L_N)_{1,1} & = c_1\alpha-s_1c_2\beta,\label{eq:LN11}\\
(L_N)_{k,k} & = \sqrt{(c_k\alpha-s_kc_{k+1}\beta)^2+\beta^2},\label{eq:LNmaindiag}\quad k>1,\\
(L_N)_{k+1,k} & = s_k\alpha+c_kc_{k+1}\beta,\label{eq:LNfirstsub}\\
(L_N)_{k+2,k} & = s_{k+1}\beta.\label{eq:LNsecondsub}
\end{align}
\end{lemma}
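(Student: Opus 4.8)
The plan is to compute the factorization constructively by sweeping Givens rotations upward from the bottom-right corner, exactly as in the proof of Lemma~\ref{lemma:infinitedimensionalQL} but now tracking the finite truncation carefully. Writing $Q_N = \prod_{i=1}^{N-1}G_i(c_i,s_i) = G_{N-1}\cdots G_1$ (the same ``prepend on the left'' convention as in the infinite case), the identity $V_N = Q_NL_N$ is equivalent to $G_1^\top\cdots G_{N-1}^\top V_N = L_N$, so I would apply $G_{N-1}^\top$ first and $G_1^\top$ last, choosing each $G_i^\top$ (a rotation in the $e_ie_{i+1}$-plane) to annihilate the single superdiagonal entry $(i,i+1)=\beta$. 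The crucial structural observation is that, because $V_N$ is tridiagonal and the sweep moves upward, each interior row $i$ is touched by exactly two consecutive rotations: first by $G_i^\top$ (as the upper row of the pair $(i,i+1)$) and then by $G_{i-1}^\top$ (as the lower row of the pair $(i-1,i)$). This locality is what forces every entry of $L_N$ to depend only on $c_i,s_i,c_{i+1},s_{i+1}$, and it is the backbone of the induction.

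First I would set up the pivot recurrence. When $G_i^\top$ is applied, row $i$ is still untouched, so the entry to be annihilated is genuinely $\beta$, while the diagonal pivot $d_{i+1}$ at position $(i+1,i+1)$ has already been modified by $G_{i+1}^\top$. The annihilation condition $c_i\beta - s_id_{i+1}=0$ then gives $s_i = \beta/\sqrt{\beta^2+d_{i+1}^2}$ and $c_i = d_{i+1}/\sqrt{\beta^2+d_{i+1}^2}$, with the bottom initialization $d_N=\alpha$ (the untouched corner). Tracking how the diagonal entry of row $i$ is updated by the two rotations that act on it yields the nonlinear pivot recurrence $d_i = c_i\alpha - s_ic_{i+1}\beta$.

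The key step is to linearize this recurrence. I would introduce the auxiliary sequence $a_j$ fixed by $a_0=0$ and the three-term recurrence $a_{j+1} = (\alpha/\beta)a_j - a_{j-1}$; substituting $d_{N-k} = \beta a_{k+2}/\sqrt{S_{k+1}}$, where $S_k = \sum_{j=1}^k a_j^2$, turns the nonlinear pivot relation into precisely this linear one, because $d_{N-k} = (a_{k+1}\alpha - a_k\beta)/\sqrt{S_{k+1}}$ matches $\beta a_{k+2}/\sqrt{S_{k+1}}$ exactly when $\alpha a_{k+1} = \beta(a_{k+2}+a_k)$. The ratio $s_i/c_i = \beta/d_{i+1}$ then collapses to $\sqrt{S_k}/a_{k+1}$, and the normalization $s_i^2+c_i^2=1$ forces the denominator $S_{k+1}=S_k+a_{k+1}^2$, producing exactly the closed forms in~\eqref{eq:structuredsinesandcosines}. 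Since the characteristic roots are $\rho^{\pm1}$ with $\rho+\rho^{-1}=\alpha/\beta$, the hypothesis $\alpha>2\beta>0$ guarantees real roots with $\rho>1$, hence $a_j>0$ for $j\ge1$, so the square roots and positive-phase choices are well defined; one then verifies directly that the closed form~\eqref{eq:geometricaj} solves the recurrence and normalization. I expect this linearization---guessing the correct substitution and matching the boundary pivot $d_N=\alpha$---to be the main obstacle, the remainder being bookkeeping.

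Finally I would read off $L_N$ by recording, for each $k$, the three nonzero bands as they are finalized during the sweep. The second-subdiagonal fill-in $(L_N)_{k+2,k}=s_{k+1}\beta$ is created when $G_{k+1}^\top$ maps the original $\beta$ of row $k+1$ into row $k+2$, and it is never touched again. The first subdiagonal $(L_N)_{k+1,k}=s_k\alpha+c_kc_{k+1}\beta$ is finalized by $G_k^\top$ acting on the already-modified entry $c_{k+1}\beta$ of row $k+1$. The diagonal is obtained only after the \emph{second} rotation on row $k$: after $G_k^\top$ the pivot is $c_k\alpha - s_kc_{k+1}\beta$, and $G_{k-1}^\top$ (whose cosine and sine align the vector $(c_k\alpha - s_kc_{k+1}\beta,\beta)$) rotates this together with the $\beta$ from row $k-1$ into $\sqrt{(c_k\alpha - s_kc_{k+1}\beta)^2+\beta^2}$, giving~\eqref{eq:LNmaindiag} for $k>1$; the corner $(L_N)_{1,1}=c_1\alpha - s_1c_2\beta$ in~\eqref{eq:LN11} is the exceptional case where no rotation $G_0^\top$ acts, so the $\beta^2$ term is absent. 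A short induction on the number of applied rotations, with invariant ``after applying $G_{N-1}^\top,\dots,G_i^\top$ the active trailing block is tridiagonal with pivot $d_i$,'' closes the argument and yields~\eqref{eq:LN11}--\eqref{eq:LNsecondsub}.
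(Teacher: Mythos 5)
Your proposal is correct and follows essentially the same route as the paper's proof: a bottom-up Givens sweep starting from $s_N=0$, $c_N=1$, yielding the nonlinear recurrence for the sines and cosines, which is then linearized by the auxiliary sequence $a_j$ satisfying $a_{j+1}\beta = a_j\alpha - a_{j-1}\beta$ and solved in closed form, with the entries of $L_N$ recovered by bookkeeping the three bands. Your explicit pivot sequence $d_i$ and the detailed tracking of which rotation finalizes each band are just a slightly more spelled-out version of what the paper compresses into ``a careful bookkeeping of the above procedure.''
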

\begin{proof}
As $V_N$ is finite-dimensional, we may begin in the bottom right corner. As $V_N$ is tridiagonal, we focus on the six entries that change for each Givens rotation. Note that:
\[
s_N = 0,\quad{\rm and}\quad c_N = 1.
\]
Then the next sine and cosine are determined by $G_{N-1}(c_{N-1}, s_{N-1})^\top V_N$ introducing a zero in the $N-1,N$ position:
\begin{align*}
\begin{pmatrix} c_{N-1} & -s_{N-1}\\ s_{N-1} & c_{N-1}\end{pmatrix}\begin{pmatrix} \beta & \alpha & \beta\\ 0 & \beta & \alpha\end{pmatrix} & = \begin{pmatrix} c_{N-1}\beta & c_{N-1}\alpha-s_{N-1}\beta & 0\\ s_{N-1}\beta & s_{N-1}\alpha+c_{N-1}\beta & \sqrt{\alpha^2+\beta^2}\end{pmatrix},\\
& = \begin{pmatrix} c_{N-1}\beta & c_{N-1}\alpha-s_{N-1}c_N\beta & 0\\ s_{N-1}\beta & s_{N-1}\alpha+c_{N-1}c_N\beta & \sqrt{\alpha^2+\beta^2}\end{pmatrix},
\end{align*}
or:
\[
s_{N-1} = \frac{\beta}{\sqrt{\alpha^2+\beta^2}},\quad{\rm and}\quad c_{N-1} = \frac{\alpha}{\sqrt{\alpha^2+\beta^2}}.
\]
Introducing another zero in the $N-2,N-1$ position involves the computation $G_{N-2}(c_{N-2}, s_{N-2})^\top G_{N-1}(c_{N-1}, s_{N-1})^\top V_N$:
\begin{align*}
& \begin{pmatrix} c_{N-2} & -s_{N-2}\\ s_{N-2} & c_{N-2}\end{pmatrix}\begin{pmatrix} \beta & \alpha & \beta\\ 0 & c_{N-1}\beta & c_{N-1}\alpha-s_{N-1}c_N\beta\end{pmatrix}\\
& = \begin{pmatrix} c_{N-2}\beta & c_{N-2}\alpha-s_{N-2}c_{N-1}\beta & 0\\ s_{N-2}\beta & s_{N-2}\alpha+c_{N-2}c_{N-1}\beta & \sqrt{(c_{N-1}\alpha-s_{N-1}c_N\beta)^2+\beta^2}\end{pmatrix},
\end{align*}
or:
\begin{align*}
s_{N-2} & = \frac{\beta}{\sqrt{(c_{N-1}\alpha-s_{N-1}c_N\beta)^2+\beta^2}},\quad{\rm and}\\
c_{N-2} & = \frac{c_{N-1}\alpha-s_{N-1}c_N\beta}{\sqrt{(c_{N-1}\alpha-s_{N-1}c_N\beta)^2+\beta^2}}.
\end{align*}
After $k$ steps, $G_{N-k}(c_{N-k}, s_{N-k})^\top \cdots G_{N-1}(c_{N-1}, s_{N-1})^\top V_N$, the sine and cosine are determined by two nonlinear recurrence relations:
\begin{align}
s_{N-k}^2 & = \frac{\beta^2}{(c_{N-k+1}\alpha-s_{N-k+1}c_{N-k+2}\beta)^2+\beta^2},\quad{\rm and}\label{eq:sinerecurrence}\\
c_{N-k}^2 & = \frac{(c_{N-k+1}\alpha-s_{N-k+1}c_{N-k+2}\beta)^2}{(c_{N-k+1}\alpha-s_{N-k+1}c_{N-k+2}\beta)^2+\beta^2}.
\end{align}
Substituting Eqs.~\eqref{eq:structuredsinesandcosines} into Eq.~\eqref{eq:sinerecurrence}, we find:
\begin{align*}
\beta^2\sum_{j=1}^{k+1}a_j^2 & = \left[(c_{N-k+1}\alpha-s_{N-k+1}c_{N-k+2}\beta)^2+\beta^2\right]\sum_{j=1}^ka_j^2,\\
\beta^2a_{k+1}^2 & = (c_{N-k+1}\alpha-s_{N-k+1}c_{N-k+2}\beta)^2\sum_{j=1}^ka_j^2,\\
& = (a_k\alpha-a_{k-1}\beta)^2.
\end{align*}
Taking the positive square root on the both sides, the linear recurrence relation:
\[
a_{k+1}\beta = a_k\alpha-a_{k-1}\beta,\quad a_1 = \beta,\quad a_2 = \alpha,
\]
is solved by Eq.~\eqref{eq:geometricaj}. A careful bookkeeping of the above procedure provides the entries of $L_N$.
\end{proof}

\begin{theorem}\label{theorem:QLconvergence}
Let $\alpha>2\beta>0$ and let:
\[
\rho = \frac{\alpha+\sqrt{\alpha^2-(2\beta)^2}}{2\beta}>1.
\]
If:
\[
N > n + \log_\rho\left(\frac{\rho-\rho^{-1}}{2\epsilon} + \sqrt{\left(\frac{\rho-\rho^{-1}}{2\epsilon}\right)^2+1}\right)-1,
\]
then:
\[
\left\| P_nL - P_n\begin{pmatrix} L_N & 0\\ 0 & 0\end{pmatrix}\right\|_2 < \epsilon\|L\|_2.
\]
\end{theorem}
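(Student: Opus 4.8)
The plan is to compare the two explicit lower-triangular factors entry-by-entry, convert the resulting entrywise estimate into a (cheap, banded) $2$-norm bound, and finally solve a scalar inequality for $N$. By Lemma~\ref{lemma:infinitedimensionalQL} the infinite factor $L$ is lower-triangular with lower bandwidth $2$, and by Lemma~\ref{lemma:finitedimensionalQL} so is $L_N$; consequently the difference $\Delta := P_n L - P_n\left(\begin{smallmatrix} L_N & 0\\ 0 & 0\end{smallmatrix}\right)$ is supported in the first $n$ rows and columns and carries at most three nonzero entries in each row and each column. For such a banded matrix one has $\|\Delta\|_2\le\sqrt{\|\Delta\|_1\|\Delta\|_\infty}\le 3\max_{i,j}|\Delta_{i,j}|$, so it suffices to control the largest entrywise discrepancy between $L$ and $L_N$ in the first $n$ rows.

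First I would observe that every entry of $L_N$ in Eqs.~\eqref{eq:LN11}--\eqref{eq:LNsecondsub} is a fixed smooth function of the nearby parameters $s_k,c_k\in[0,1]$, while the corresponding entry of $L$ is that same function evaluated at the limiting values $s=\rho^{-1}$ and $c=\sqrt{1-\rho^{-2}}$. One checks directly from Lemma~\ref{lemma:infinitedimensionalQL} that these limits reproduce the diagonal $\tfrac{\alpha+\sqrt{\alpha^2-(2\beta)^2}}{2}=\beta\rho$, the first subdiagonal $2\beta$, and the second subdiagonal $\tfrac{2\beta^2}{\alpha+\sqrt{\alpha^2-(2\beta)^2}}=\beta\rho^{-1}$, using $\rho+\rho^{-1}=\alpha/\beta$. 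Since these functions have bounded derivatives on $[0,1]^2$, the entrywise discrepancy in row $m\le n$ is controlled by $\max_{|k-m|\le1}\bigl(|s_k-\rho^{-1}|+|c_k-c|\bigr)$, and it remains to bound this quantity in the worst case $m$ near $n$, i.e. for the smallest relevant index $K=N-m$.

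The heart of the argument is therefore the geometric convergence of the structured sines and cosines of Eq.~\eqref{eq:structuredsinesandcosines}. Writing $a_j=\tfrac{\beta^2}{\sqrt{\alpha^2-(2\beta)^2}}(\rho^j-\rho^{-j})$ from Eq.~\eqref{eq:geometricaj}, so that $s_{N-k}^2=\sum_{j=1}^k a_j^2\big/\sum_{j=1}^{k+1}a_j^2$, I would estimate the partial sums $\sum_{j=1}^k a_j^2$ by their dominant geometric term and show that the deviations $|s_m-\rho^{-1}|$ and $|c_m-c|$ are bounded by a multiple of the ratio $a_1/a_{N+1-n}=(\rho-\rho^{-1})/(\rho^{N+1-n}-\rho^{-(N+1-n)})$; the appearance of the differences $\rho^{j}-\rho^{-j}=2\sinh(j\ln\rho)$ is exactly what produces the hyperbolic form of the final threshold. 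Combining this with the entrywise reduction above and the normalization $\|L\|_2=\|V\|_2$ (valid since $Q$ is an isometry) is designed to yield
\[
\left\|P_nL - P_n\begin{pmatrix} L_N & 0\\ 0 & 0\end{pmatrix}\right\|_2 \le \frac{\rho-\rho^{-1}}{\rho^{N+1-n}-\rho^{-(N+1-n)}}\,\|L\|_2 .
\]

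It then remains only to force the right-hand side below $\epsilon\|L\|_2$. Setting $y=\rho^{N+1-n}$, the requirement $\rho^{N+1-n}-\rho^{-(N+1-n)}>(\rho-\rho^{-1})/\epsilon$ becomes the quadratic inequality $y^2-\tfrac{\rho-\rho^{-1}}{\epsilon}\,y-1>0$, whose positive root is $y=\tfrac{\rho-\rho^{-1}}{2\epsilon}+\sqrt{(\tfrac{\rho-\rho^{-1}}{2\epsilon})^2+1}$; applying $\log_\rho$ recovers precisely the stated bound on $N$. The main obstacle is the middle step: pinning down the entrywise estimate with the \emph{exact} numerator $\rho-\rho^{-1}$ rather than a messier sufficient condition. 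This requires a careful (and deliberately lossy) bounding of the ratios $\sum_{j=1}^k a_j^2\big/\sum_{j=1}^{k+1}a_j^2$ together with a clean lower bound for $\|L\|_2$, so that the genuinely faster (roughly $\rho^{-2(N-n)}$) entrywise decay, along with its stray polynomial-in-$(N-n)$ factors, is absorbed into the single geometric ratio appearing in the clean threshold.
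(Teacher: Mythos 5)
Your strategy --- a direct entrywise comparison of the two explicit bidiagonal-plus-one factors, converted to a $2$-norm bound via the bandwidth --- is not the route the paper takes for this theorem, although it is essentially the route the paper takes for the companion reverse-Cholesky result that follows it. The paper instead makes a perturbative argument: it writes $P_nQ_N^\top V_b = P_nG_1^\top\cdots G_{N-1}^\top\beta e_N$, commutes $P_n$ past $G_1,\dots,G_{n-1}$, and observes that the surviving quantity is \emph{exactly} $\|V_b\|_2\prod_{k=1}^{N-n}|s_{N-k}|$. This product telescopes, by Eq.~\eqref{eq:structuredsinesandcosines}, to $\sqrt{a_1^2/\sum_{j=1}^{N-n+1}a_j^2} < a_1/a_{N-n+1} = (\rho-\rho^{-1})/\bigl(2\sinh[(N-n+1)\log\rho]\bigr)$, which is where the clean numerator $\rho-\rho^{-1}$ comes from; Lemma~\ref{lemma:QLperturbation} then converts this into the statement about $L_N$ versus $L$. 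The telescoping identity is the entire proof; no entrywise perturbation analysis of Eqs.~\eqref{eq:LN11}--\eqref{eq:LNsecondsub} is needed.

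The genuine gap in your proposal is precisely the step you flag as ``the main obstacle'': you never establish the quantitative bounds $|s_m-\rho^{-1}|$, $|c_m-c|$, nor the propagation of those bounds through the entry formulas, nor the absorption of the resulting constants (the factor $3$ from the bandwidth, the Lipschitz constants of the entry functions in $(s,c)$, the special first row, and the normalization $\|L\|_2=\|V\|_2=\alpha+2\beta$) into the specific threshold $(\rho-\rho^{-1})/(\rho^{N+1-n}-\rho^{-(N+1-n)})$. This is not a routine omission: your own observation that the entrywise discrepancies decay like $\rho^{-2(N-n)}$ while the target decays only like $\rho^{-(N-n)}$ means the comparison is asymptotic in character, and for moderate $N-n$ (which the theorem must cover, since it asserts a bound for \emph{every} $N$ exceeding the stated threshold) there is no guarantee that your accumulated constants sit below the claimed one. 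So as written the argument establishes a theorem of the same flavour with an unspecified, possibly larger, admissible $N$, not the stated theorem. To recover the exact constant you would either need to carry out the ``careful and deliberately lossy'' bookkeeping you defer, or --- far more economically --- switch to the paper's route, where the product of sines collapses to $a_1/\sqrt{\sum_{j=1}^{N-n+1}a_j^2}$ with no constants to track at all.
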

\begin{proof}
We wish to invoke Lemma~\ref{lemma:QLperturbation} with the Givens rotations from Lemma~\ref{lemma:finitedimensionalQL}. Then:
\[
P_nQ_N^\top V_b = P_n G_1^\top\cdots G_{N-1}^\top \beta e_N.
\]
As $P_n$ commutes with $G_i$ for $i<n$:
\begin{align*}
\|P_nQ_N^\top V_b\|_2 & = \|G_1^\top\cdots G_{n-1}^\top P_n G_n^\top \cdots G_{N-1}^\top \beta e_N \|_2,\\
& = \|P_n G_n^\top \cdots G_{N-1}^\top \beta e_N \|_2,\\
& = \|V_b\|_2 \prod_{k=1}^{N-n} |s_{N-k}|.
\end{align*}
Then, by Lemma~\ref{lemma:finitedimensionalQL}:
\begin{align*}
\prod_{k=1}^{N-n} |s_{N-k}| & = \prod_{k=1}^{N-n} \sqrt{\frac{\displaystyle\sum_{j=1}^k a_j^2}{\displaystyle\sum_{j=1}^{k+1}a_j^2}},\\
& = \sqrt{\frac{a_1^2}{\displaystyle\sum_{j=1}^{N-n+1}a_j^2}} < \sqrt{\frac{a_1^2}{a_{N-n+1}^2}},\\
& = \frac{a_1}{a_{N-n+1}} = \frac{\rho-\rho^{-1}}{\rho^{N-n+1}-\rho^{n-N-1}} = \frac{\rho-\rho^{-1}}{2\sinh[(N-n+1)\log\rho]}.
\end{align*}
By our lower bound on $N$, we guarantee that $\|P_nQ_N^\top V_b\|_2 < \epsilon\|V_b\|_2$, and it follows from Lemma~\eqref{lemma:QLperturbation} at the subsequent discussion that the first $n\times n$ principal section of $L_N$ is $2$-normwise close to the same principal section of $L$.
\end{proof}

\begin{remark}
Consider the Bernstein ellipse:
\[
{\cal E}_\rho = \left\{ z\in\mathbb{C} : z = \frac{\rho e^{{\rm i}\theta} + \rho^{-1} e^{-{\rm i}\theta}}{2},\quad \theta\in[0,2\pi),\quad \rho\ge1\right\}.
\]
Then the real pole of $r(x)$, or the root of $v(x)$, is located at $-\frac{\alpha}{2\beta}$, corresponding to the Bernstein ellipse parameter $\rho = \frac{\alpha+\sqrt{\alpha^2-(2\beta)^2}}{2\beta}$. It is no surprise that the rate of exponential convergence of the finite-dimensional sines to their asymptotic limits is precisely equal to the logarithm of the Bernstein ellipse parameter. But we feel the physical connection is important: the closer the pole, the slower the convergence of finite-dimensional $QL$ to the infinite-dimensional.
\end{remark}

\subsubsection{Reverse Cholesky factorizations}

\begin{lemma}\label{lemma:infinitedimensionalreverseCholesky}
If $\alpha > 2|\beta|$, the reverse Cholesky factorization of $V$ is given by:
\[
V = \begin{pmatrix} l_d & l_o\\ & l_d & \ddots\\ & & \ddots\end{pmatrix}\begin{pmatrix} l_d\\ l_o & l_d\\ & \ddots & \ddots\end{pmatrix},
\]
where:
\[
l_d = \sqrt{\frac{\alpha+\sqrt{\alpha^2-(2\beta)^2}}{2}},\quad{\rm and}\quad l_o = \beta\sqrt{\frac{2}{\alpha+\sqrt{\alpha^2-(2\beta)^2}}}.
\]
\end{lemma}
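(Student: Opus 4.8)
The plan is to reduce this infinite-dimensional factorization to two scalar equations by exploiting the Toeplitz structure, verify an explicit candidate by direct multiplication, and pin it down by uniqueness. First I would record that $V$ is a bounded, self-adjoint, strictly positive definite operator: its finite sections are symmetric tridiagonal Toeplitz with symbol $\alpha+2\beta\cos\theta$, which under the hypothesis $\alpha>2|\beta|$ is bounded between $\alpha-2|\beta|>0$ and $\alpha+2|\beta|$. Hence $V$ is invertible with bounded inverse, and $V^{-1}$ is strictly positive definite, so by Corollary~\ref{cor:LtL} the reverse Cholesky factorization $V=L^\top L$ exists with a unique invertible lower-triangular $L$ of positive phase. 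It therefore suffices to exhibit one such $L$ and verify it.

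Next I would posit that, as with the $QL$ factorization of Lemma~\ref{lemma:infinitedimensionalQL}, the semi-infinite Toeplitz structure forces the factor to be Toeplitz as well, and take $L$ to be the lower bidiagonal Toeplitz operator with constant diagonal $l_d$ and constant subdiagonal $l_o$. A direct entrywise multiplication then gives $(L^\top L)_{i,i}=l_d^2+l_o^2$, $(L^\top L)_{i,i\pm1}=l_d l_o$, and $(L^\top L)_{i,j}=0$ for $|i-j|\ge 2$, the last because the two bands cannot overlap at distance greater than one. Crucially, the same computation holds verbatim in the first row, where $(L^\top L)_{1,1}=l_d^2+l_o^2$ since the first column of $L$ already carries both bands; there is no top-left transient. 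Matching against $V$ thus reduces the entire factorization to the pair of scalar equations $l_d^2+l_o^2=\alpha$ and $l_d l_o=\beta$.

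I would then solve this pair: eliminating $l_o=\beta/l_d$ yields the biquadratic $l_d^4-\alpha l_d^2+\beta^2=0$, whose roots are $l_d^2=(\alpha\pm\sqrt{\alpha^2-(2\beta)^2})/2$. Selecting the larger root reproduces the stated $l_d$, and $l_o=\beta/l_d$ reproduces the stated $l_o$. The two defining identities are confirmed by the single algebraic simplification $(\alpha+D)^2+(2\beta)^2=2\alpha(\alpha+D)$, with $D=\sqrt{\alpha^2-(2\beta)^2}$, which gives $l_d^2+l_o^2=\alpha$, while $l_d^2 l_o^2=\beta^2$ together with the positive-phase sign choice ($l_d>0$, $l_o$ carrying the sign of $\beta$) gives $l_d l_o=\beta$.

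The main obstacle is not the arithmetic but justifying that this explicit Toeplitz ansatz \emph{is} the reverse Cholesky factor, since --- unlike ordinary Cholesky --- the reverse factorization is not computable from finite sections (the elimination would have to begin in the unreachable bottom-right corner). I would resolve this in two steps. First, the absence of a top-left transient, noted above, is exactly what makes the constant-diagonal Toeplitz ansatz admissible for the semi-infinite problem. Second, the larger root is the correct one: it forces $|l_d|>|l_o|$, so the symbol $l_d+l_o z$ of $L$ has no zero in the closed unit disk and $L$ is boundedly invertible on $\ell^2$ (the outer, or Fej\'er--Riesz spectral, factor); the smaller root would produce a factor with unbounded inverse, incompatible with the strict positive definiteness of $V^{-1}$. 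Since Corollary~\ref{cor:LtL} guarantees that the invertible lower-triangular factor is unique, the verified candidate must be the reverse Cholesky factorization, completing the proof.
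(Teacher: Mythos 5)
Your proposal is correct and its core is the same as the paper's proof: verify the bidiagonal Toeplitz ansatz entrywise, which reduces to the two scalar identities $l_d^2+l_o^2=\alpha$ and $l_dl_o=\beta$. The paper's proof consists of only that verification, so your additional material --- existence/uniqueness via Corollary~\ref{cor:LtL}, the derivation of $l_d$ from the biquadratic, and the outer-factor argument for why the larger root is the invertible (hence reverse Cholesky) factor --- is a welcome strengthening of an argument the paper leaves implicit, not a departure from it.
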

\begin{proof}
On main diagonal entries, we find:
\[
\alpha = l_d^2+l_o^2 = \frac{\alpha+\sqrt{\alpha^2-(2\beta)^2}}{2} + \frac{2\beta^2}{\alpha+\sqrt{\alpha^2-(2\beta)^2}},
\]
which is true. On sub- and super-diagonal entries, it is also true that:
\[
\beta = l_dl_o.
\]
\end{proof}
This lemma is an instance of the discrete Wiener--Hopf factorization~\cite{Bini-Gemignani-Meini-343-21-02,Bottcher-Silbermann-99}, which describes a $UL$ factorization of a Toeplitz operator resulting in Toeplitz factors where all the work revolves around factoring the Toeplitz symbol.

As we may expect, the finite-dimensional reverse Cholesky factorization of a Toeplitz matrix is more involved.

\begin{lemma}\label{lemma:finitedimensionalreverseCholesky}
If $\alpha>2|\beta|$, the reverse Cholesky factorization of $V_N$ is given by:
\[
V_N = L_N^\top L_N,
\]
where:
\begin{equation}\label{eq:reverseCholeskymaindiagfirstsub}
(L_N)_{N-k,N-k} = \sqrt{d_k},\quad{\rm and}\quad (L_N)_{N-k,N-k-1} = \frac{\beta}{\sqrt{d_k}},
\end{equation}
and where:
\begin{equation}
d_k = \frac{b_{k+1}}{b_k},
\end{equation}
where:
\begin{equation}\label{eq:ek}
b_k = \frac{1}{\sqrt{\alpha^2-(2\beta)^2}}\left[\left(\frac{\alpha+\sqrt{\alpha^2-(2\beta)^2}}{2}\right)^{k+1} - \left(\frac{2\beta^2}{\alpha+\sqrt{\alpha^2-(2\beta)^2}}\right)^{k+1}\right].
\end{equation}
\end{lemma}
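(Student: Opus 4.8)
The plan is to exploit the fact that $V_N$ is symmetric, tridiagonal and, by the strict diagonal dominance $\alpha>2|\beta|$, positive definite, so that its reverse (upper-times-lower) Cholesky factor $L_N$ is lower bidiagonal: only the main diagonal and first subdiagonal are nonzero. First I would compare entries in $V_N = L_N^\top L_N$, proceeding from the bottom-right corner, since a factorization $(\text{upper})(\text{lower})$ is the one determined in that direction. Writing $d_k = (L_N)_{N-k,N-k}^2$, the corner entry gives $d_0 = \alpha$. The subdiagonal entries are pinned by the off-diagonal equation $(L_N)_{N-k,N-k}\,(L_N)_{N-k,N-k-1} = \beta$, whence $(L_N)_{N-k,N-k-1} = \beta/\sqrt{d_k}$, matching the claim. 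Finally, equating the diagonal entry at $(N-k,N-k)$ for $k\ge1$, namely $(L_N)_{N-k,N-k}^2 + (L_N)_{N-k+1,N-k}^2 = d_k + \beta^2/d_{k-1}$, to $\alpha$ yields the scalar nonlinear recurrence $d_k = \alpha - \beta^2/d_{k-1}$ with $d_0 = \alpha$. This already establishes the stated form of the entries; what remains is to solve the recurrence in closed form.

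The key step is to linearize this Riccati-type recurrence by the substitution $d_k = b_{k+1}/b_k$. Then $d_k = \alpha - \beta^2/d_{k-1}$ becomes $b_{k+1}/b_k = \alpha - \beta^2 b_{k-1}/b_k$, and multiplying through by $b_k$ I obtain the linear second-order recurrence $b_{k+1} = \alpha b_k - \beta^2 b_{k-1}$. Its characteristic polynomial $\lambda^2 - \alpha\lambda + \beta^2$ has the two real roots $\lambda_\pm = (\alpha \pm \sqrt{\alpha^2-(2\beta)^2})/2$, with product $\lambda_+\lambda_- = \beta^2$ and difference $\lambda_+ - \lambda_- = \sqrt{\alpha^2-(2\beta)^2}$; in particular $\lambda_- = 2\beta^2/(\alpha+\sqrt{\alpha^2-(2\beta)^2})$, which is the second base appearing in the stated formula for $b_k$. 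The general solution is a linear combination of $\lambda_+^k$ and $\lambda_-^k$, and I would fix the two free constants by the normalization $b_0 = 1$, $b_1 = \alpha$, chosen precisely so that $d_0 = b_1/b_0 = \alpha$ matches the corner boundary condition. This recovers exactly the claimed closed form $b_k = (\lambda_+^{k+1}-\lambda_-^{k+1})/\sqrt{\alpha^2-(2\beta)^2}$.

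Finally I would verify consistency and well-posedness. Checking $b_0 = 1$ and $b_1 = \lambda_+ + \lambda_- = \alpha$ confirms the initial data, while substituting the closed form back into $b_{k+1} = \alpha b_k - \beta^2 b_{k-1}$ is immediate from the characteristic equation. Because $\alpha > 2|\beta| > 0$ forces $\lambda_+ > \lambda_- > 0$, every $b_k = (\lambda_+^{k+1}-\lambda_-^{k+1})/(\lambda_+-\lambda_-)$ is strictly positive, so each $d_k = b_{k+1}/b_k > 0$ and all the square roots $\sqrt{d_k}$ are real and well-defined, re-confirming the positive definiteness of $V_N$. I expect the only genuinely delicate point to be bookkeeping: keeping the index shift between $d_k$ and $b_k$ straight, and making clear that the finite boundary condition $d_0 = \alpha$ (pinned by the corner entry) is exactly what distinguishes this factorization from the Toeplitz fixed point of the recurrence governing the infinite case in Lemma~\ref{lemma:infinitedimensionalreverseCholesky}. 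The linearization itself is the standard device for such Riccati recurrences and should go through without difficulty.
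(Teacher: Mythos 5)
Your proposal is correct and follows essentially the same route as the paper: match entries of $L_N^\top L_N$ against $V_N$ from the bottom-right corner to obtain the off-diagonal relation and the nonlinear recurrence $d_k = \alpha - \beta^2/d_{k-1}$ with $d_0=\alpha$, linearize via $d_k = b_{k+1}/b_k$ to get $b_{k+2}-\alpha b_{k+1}+\beta^2 b_k = 0$ with $b_0=1$, $b_1=\alpha$, and solve by the characteristic roots. Your additional checks (bidiagonality of $L_N$, positivity of the $b_k$, and the contrast with the Toeplitz fixed point of the infinite-dimensional case) are sound refinements of the same argument rather than a different approach.
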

\begin{proof}
By considering the product $L_N^\top L_N$, we recover the off-diagonal relationship in Eq.~\eqref{eq:reverseCholeskymaindiagfirstsub} and the nonlinear recurrence for the main diagonal:
\[
(L_N)_{N-k-1,N-k-1}^2 + \frac{\beta^2}{(L_N)_{N-k,N-k}^2} = \alpha,\quad{\rm and}\quad (L_N)_{N-k,N-k-1} = \frac{\beta}{(L_N)_{N-k,N-k}}.
\]
The recurrence relation is linearized by the definitions of $d_k$ and $b_k$, resulting in:
\[
b_{k+2} -\alpha b_{k+1}+\beta^2b_k = 0,\quad b_0 = 1,\quad b_1 = \alpha.
\]
This linear recurrence relation is solved by Eq.~\eqref{eq:ek}.
\end{proof}

We could replicate an analogous result to Theorem~\ref{theorem:QLconvergence}, but the estimation of $\| L_N^\top P_n L_n^{-\top} V_b\|_2$ to invoke Lemma~\ref{lemma:reverseCholeskyperturbation} is more technically complicated. Instead, we will directly show that the $n\times n$ principal sections of $L$ and $L_N$ are $2$-normwise close, provided that $N$ is sufficiently large.

\begin{theorem}
Let $\alpha>2\beta>0$ and let:
\[
\rho = \frac{\alpha+\sqrt{\alpha^2-(2\beta)^2}}{2\beta}>1.
\]
If:
\[
N > n + \log_\rho\left(\frac{2}{\epsilon}\sqrt{\frac{\beta}{\alpha+2\beta}}\right)-\frac{1}{2},
\]
then:
\[
\left\| P_nL - P_n\begin{pmatrix} L_N & 0\\ 0 & 0\end{pmatrix}\right\|_2 < \epsilon\|L\|_2.
\]
\end{theorem}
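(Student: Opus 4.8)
The plan is to follow the outline of Theorem~\ref{theorem:QLconvergence} but, as the surrounding text promises, to compare the two factors directly instead of invoking Lemma~\ref{lemma:reverseCholeskyperturbation}. Write $\widehat L$ for the zero-padded $L_N$ appearing in the statement and set $E := P_n L - P_n\widehat L$. Since both $L$ (Lemma~\ref{lemma:infinitedimensionalreverseCholesky}) and $L_N$ (Lemma~\ref{lemma:finitedimensionalreverseCholesky}) are lower-bidiagonal, $E$ is supported in the top-left $n\times n$ block and is itself lower-bidiagonal, with
\[
E_{j,j} = \sqrt{d_{N-j}}-l_d,\qquad E_{j,j-1} = l_o - \frac{\beta}{\sqrt{d_{N-j}}},\qquad 1\le j\le n,
\]
where $d_k = b_{k+1}/b_k$. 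Abbreviating the two roots of $x^2-\alpha x+\beta^2$ as $A = \tfrac{\alpha+\sqrt{\alpha^2-(2\beta)^2}}{2} = \rho\beta$ and $B = \tfrac{\beta^2}{A} = \beta\rho^{-1}$, so that $l_d = \sqrt A$, $l_o = \beta/l_d$ and $A/B = \rho^2$, I would first compute in closed form
\[
d_k - l_d^2 = \frac{(A-B)\,B^{k+1}}{A^{k+1}-B^{k+1}} = \frac{\beta(\rho-\rho^{-1})}{\rho^{2(k+1)}-1}>0.
\]
This shows $d_k$ decreases monotonically to $l_d^2$, so $|E_{j,j}|$ and $|E_{j,j-1}|$ both increase in $j$ and attain their maxima at $j=n$, i.e.\ at $k=N-n$.

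With the worst index located, the next step is to convert $d_k - l_d^2$ into entry-wise bounds. Using $\sqrt{d_k}+l_d>2l_d$ gives $\sqrt{d_k}-l_d \le (d_k-l_d^2)/(2l_d)$, and then $l_d\sqrt{d_k}>l_d^2 = A = \rho\beta$ turns the sub-diagonal error $l_o-\beta/\sqrt{d_k} = \beta(\sqrt{d_k}-l_d)/(l_d\sqrt{d_k})$ into the same estimate up to a factor $\rho^{-1}$. Substituting $A-B = \beta(\rho-\rho^{-1})$ and $l_d = \sqrt{\rho\beta}$ and discarding the factor $(\rho-\rho^{-1})/(\rho^{k+1}-\rho^{-(k+1)})\le1$ collapses the geometric quotient to a single power of $\rho$, leaving bounds of the form $\max(|E_{n,n}|,|E_{n,n-1}|)\le C\sqrt\beta\,\rho^{-(N-n)}$ with an explicit constant $C$. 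Because $E$ is lower-bidiagonal, I would then control its spectral norm by the triangle inequality $\|E\|_2\le\max_j|E_{j,j}|+\max_j|E_{j,j-1}|$, each summand bounded by the previous estimate.

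Finally I would normalize by $\|L\|_2$. Since $V = L^\top L$ is the tridiagonal Toeplitz matrix with symbol $\alpha+2\beta\cos\theta$, we have $\|L\|_2^2 = \|V\|_2 = \alpha+2\beta$; equivalently, reading off the bidiagonal Toeplitz symbol of $L$, $\|L\|_2 = l_d+l_o = \sqrt\beta(\sqrt\rho+\sqrt{\rho^{-1}})$. Combining with $\|E\|_2\le C\sqrt\beta\,\rho^{-(N-n)}$, the inequality $\|E\|_2<\epsilon\|L\|_2$ holds as soon as $\rho^{-(N-n)}$ falls below a multiple of $\epsilon\sqrt{(\alpha+2\beta)/\beta}$, which rearranges into the stated lower bound on $N$. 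I expect the main obstacle to be the second step: reducing the square-rooted geometric quotient to a single clean power of $\rho$ with a constant and exponent shift exactly matching the threshold $\tfrac{2}{\epsilon}\sqrt{\beta/(\alpha+2\beta)}$ and the $-\tfrac{1}{2}$ term. Indeed the true decay is $\rho^{-2(N-n)}$, so the stated condition is the deliberately looser bound obtained after the crude comparison $\rho^{-2(k+1)}\le\rho^{-(k+1/2)}$; keeping track of the precise constants through the square root and the two bands is where the care is needed.
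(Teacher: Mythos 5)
Your proposal is correct and follows essentially the same route as the paper's proof: both compute $\|L\|_2=\sqrt{\alpha+2\beta}$, reduce $\|P_nL-P_n\widehat L\|_2$ to the sum of the diagonal and sub-diagonal entry differences at the worst index $k=N-n$ (via monotonicity of $d_k\downarrow l_d^2$), and bound each by roughly $\sqrt{\beta}\,\rho^{-k-1/2}$ before rearranging. The only material difference is the elementary inequality used on the square-root differences --- the paper uses $|\sqrt{x}-\sqrt{x+y}|<\sqrt{y}$ and its reciprocal analogue, while your $\sqrt{d_k}-l_d\le(d_k-l_d^2)/(2l_d)$ is slightly sharper and still lands comfortably inside the stated threshold.
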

\begin{proof} As a reverse Cholesky factor, it follows that:
\[
\|L\|_2 = \sqrt{\|V\|_2} = \sqrt{\alpha+2\beta}.
\]
The largest singular value of a matrix is bounded above by the maximum over all the absolute row and column sums~\cite{Qi-56-105-84}. For our bidiagonal matrix, this means:
\begin{align*}
\left\| P_nL - P_n\begin{pmatrix} L_N & 0\\ 0 & 0\end{pmatrix}\right\|_2 & \le \max\left\{\max_{2\le k\le n} \left[|L_{k,k} - (L_N)_{k,k}| + |L_{k,k-1} - (L_N)_{k,k-1}|\right],\right.\\
&\qquad\qquad \left. \max_{1\le k\le n} \left[|L_{k,k} - (L_N)_{k,k}| + |L_{k+1,k} - (L_N)_{k+1,k}|\right]\right\}.
\end{align*}
Since $\sqrt{d_k}$ is monotonically decreasing to $l_d$, the absolute row sums are uniformly larger than the absolute column sums and for $n>1$, the maximum absolute row sum is attained by the last row:
\[
\left\| P_nL - P_n\begin{pmatrix} L_N & 0\\ 0 & 0\end{pmatrix}\right\|_2 \le |L_{n,n} - (L_N)_{n,n}| + |L_{n,n-1} - (L_N)_{n,n-1}|.
\]
On the main diagonal, we have:
\begin{align*}
|l_d - \sqrt{d_k}| & = \left| \sqrt{\beta\rho} - \sqrt{\beta\frac{\rho^{k+2}-\rho^{-k-2}}{\rho^{k+1}-\rho^{-k-1}}}\right|,\\
& = \sqrt{\beta}\left| \sqrt{\rho} - \sqrt{\frac{\rho^{k+2} - \rho^{-k}}{\rho^{k+1}-\rho^{-k-1}} + \frac{\rho^{-k}-\rho^{-k-2}}{\rho^{k+1}-\rho^{-k-1}}}\right|,\\
& = \sqrt{\beta}\left| \sqrt{\rho} - \sqrt{\rho + \rho^{-1-2k}\frac{1-\rho^{-2}}{1-\rho^{-2k-2}}}\right|.
\end{align*}
Since $|\sqrt{x} -\sqrt{x+y}| < \sqrt{y}$ for $x,y>0$, it follows that:
\[
|l_d - \sqrt{d_k}| < \sqrt{\beta}\sqrt{\rho^{-1-2k}\frac{1-\rho^{-2}}{1-\rho^{-2k-2}}} < \sqrt{\beta}\rho^{-k-\frac{1}{2}}.
\]
On the sub-diagonal, we have the reciprocal difference:
\begin{align*}
\left|l_o - \frac{\beta}{\sqrt{d_k}}\right| & = \left|\sqrt{\frac{\beta}{\rho}} - \frac{\beta}{\sqrt{d_k}}\right|,\\
& = \sqrt{\beta}\left|\frac{1}{\sqrt{\rho}} - \frac{1}{\sqrt{\rho + \rho^{-1-2k}\frac{1-\rho^{-2}}{1-\rho^{-2k-2}}}}\right|,
\end{align*}
This time, we use $|\frac{1}{\sqrt{x}} - \frac{1}{\sqrt{x+y}}| < \sqrt{\frac{y}{x(x+y)}} < \frac{\sqrt{y}}{x} < \sqrt{y}$ for $x>1$ and $y>0$ to find:
\[
\left|l_o - \frac{\beta}{\sqrt{d_k}}\right| < \sqrt{\beta}\sqrt{\rho^{-1-2k}\frac{1-\rho^{-2}}{1-\rho^{-2k-2}}} < \sqrt{\beta}\rho^{-k-\frac{1}{2}}.
\]
Setting $k=N-n$, the inequality is attained.
\end{proof}
If $\beta<0$, a similar analysis reveals the same exponential convergence of the finite reverse Cholesky factor to its infinite-dimensional analogue.

\section{Applications and numerical experiments}\label{section:applicationsnumexp}

The free and open-source implementation of our algorithms is available in C at {\tt FastTransforms}~\cite{Slevinsky-GitHub-FastTransformsC}. Our numerical experiments are conducted on an iMac Pro (Early 2018) with a $2.3$ GHz Intel Xeon W-2191B with $128$ GB $2.67$ GHz DDR4 RAM. The library is compiled with {\tt -Ofast -march=native} optimization flags.

\subsection{Rapid modified Jacobi synthesis and analysis}

Consider the rational Jacobi weight modification:
\[
r(x;\gamma) = \frac{x^2+(500\gamma)^2}{[(x-\frac{1}{2})^2+\gamma^2]^2[(x+\frac{3}{4})^2+\gamma^2]}.
\]
We expand the corresponding orthonormal polynomials, $q_n^{(\alpha,\beta)}(x;\gamma)$, in Jacobi polynomials with the same parameters. Then, by a Jacobi--Chebyshev transform~\cite{Olver-Slevinsky-Townsend-29-573-20}, we further convert to first-kind Chebyshev polynomials, and rapidly synthesize the modified polynomials (and expansions) on a Chebyshev grid via the fast inverse discrete cosine transform (iDCT)~\cite{Frigo-Johnson-93-216-05}. This entire procedure requires:
\[
\underbrace{\mathcal{O}\left(n + \log_{\rho}\left[\frac{\rho-\rho^{-1}}{2\epsilon}+\sqrt{\left(\frac{\rho-\rho^{-1}}{2\epsilon}\right)^2+1}\right]\right)}_{\rm rational~modification~by~Theorem~\ref{theorem:QLconvergence}} + \underbrace{\mathcal{O}(n\log n\log\epsilon^{-1})}_{\rm Jacobi-Chebyshev} + \underbrace{\mathcal{O}(n\log n)}_{\rm iDCT},
\]
flops, where $\rho = \rho(\gamma)$ is the smallest Bernstein ellipse parameter for the four distinct poles of $r(x;\gamma)$. The left panel of Figure~\ref{fig:modifiedjacobi} illustrates one such polynomial and Szeg\H o's corresponding asymptotic envelope~\cite[Theorem 12.1.4]{Szego-75}.

With the connection problem in hand, we may also modify the Jacobi matrices for the construction of modified Gaussian quadrature rules. The right panel of Figure~\ref{fig:modifiedjacobi} illustrates how the $30$-point rule is modified for $10^{-4}<\gamma<10^2$; in particular, we note that some nodes converge toward the projection of the poles of $r(x;\gamma)$ on the real axis as $\gamma\to0$ and are dispersed by the appearance of roots of $r(x;\gamma)$ near origin.

\begin{figure}[htbp]
\begin{center}
\begin{tabular}{cc}
\includegraphics[width=0.465\textwidth]{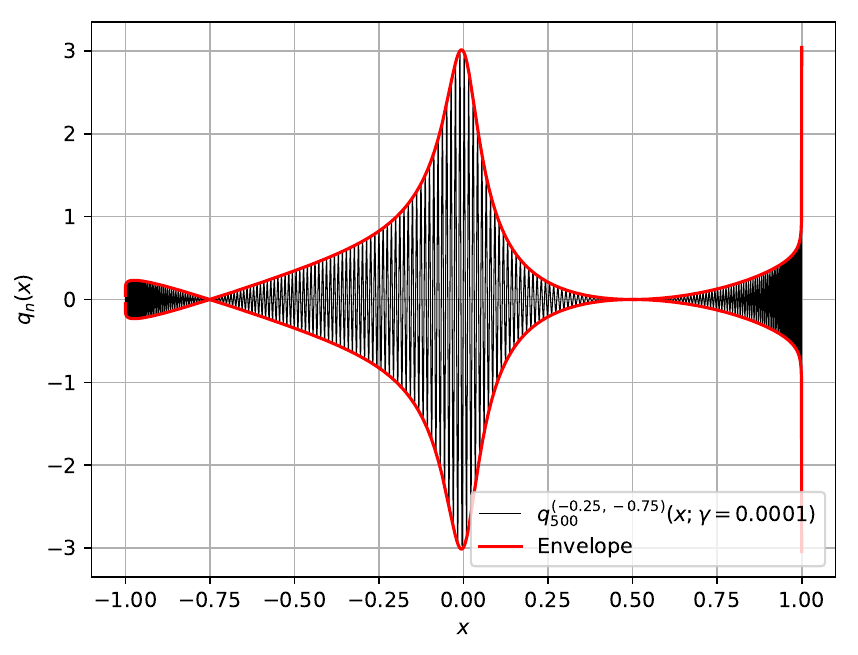}&
\includegraphics[width=0.465\textwidth]{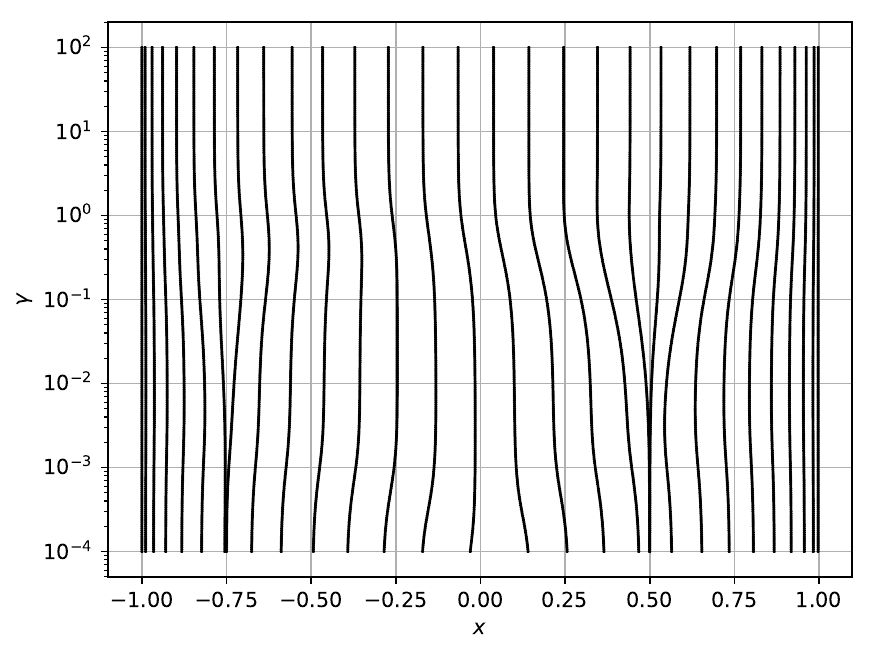}\\
\end{tabular}
\caption{Left: synthesis of $q_{500}^{(-0.25,-0.75)}(x;\gamma=0.0001)$ on a Chebyshev grid with the Szeg\H o envelope. Right: the nodes of the $30$-point modified Gaussian quadrature rule with $(\alpha,\beta) = (-0.25, -0.75)$.}
\label{fig:modifiedjacobi}
\end{center}
\end{figure}

Figure~\ref{fig:modifiedjacobierrortiming} illustrates the relative error and calculation times when working with degree-$n$ modified Jacobi polynomials. The left panel shows that the method is numerically stable. Notably in the right panel, the complexity of the factorization closely matches the result of Theorem~\ref{theorem:QLconvergence} for the simple pole modifying second-kind Chebyshev polynomials. This complexity includes a relatively large overhead at small degrees as $N\gg n$ such that it may accurately compute $2$-normwise nearby approximations to finite sections of the orthogonal and lower triangular factors of $V$. With the precomputation in hand, the execution times appear to produce relatively clean and predictable complexity plots.

\begin{figure}[htbp]
\begin{center}
\begin{tabular}{cc}
\includegraphics[width=0.465\textwidth]{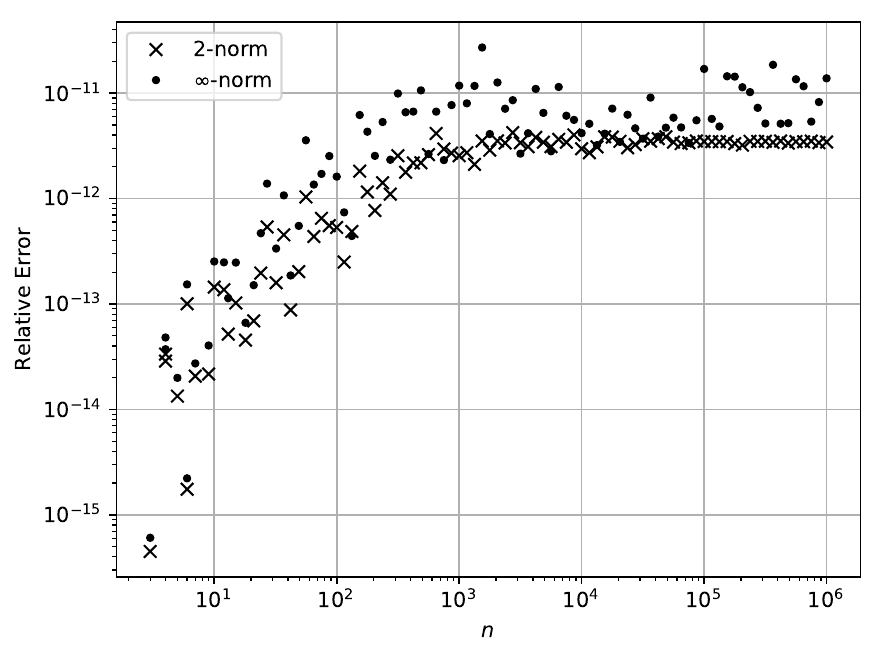}&
\includegraphics[width=0.465\textwidth]{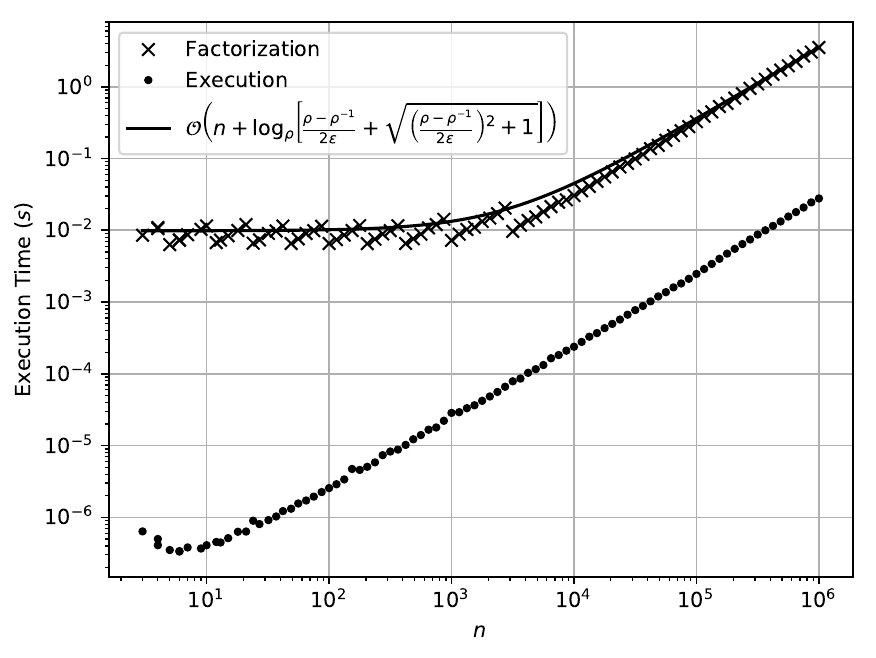}\\
\end{tabular}
\caption{Conversion of a degree-$n$ expansion in modified Jacobi polynomials $q_n^{(-0.25,-0.75)}(x;\gamma=0.01)$ with standard normally distributed pseudorandom coefficients to Jacobi polynomials with the same parameters. Left: $2$-norm and $\infty$-norm relative error in the forward and backward transformation. Right: precomputation and execution time as well as a complexity estimate based on Theorem~\ref{theorem:QLconvergence}.}
\label{fig:modifiedjacobierrortiming}
\end{center}
\end{figure}

\subsection{Modified Laguerre polynomials}

As alluded to in \S\ref{subsection:irrationalmeasuremodifications}, it would be impossible for a Laguerre polynomial series approximating $r(x) = \frac{x}{x+\gamma}$ to also well-approximate the infinite matrix $r(X_P)$. By the logical rational approximation, it is straightforward to consider the rationally modified Laguerre polynomials. Figure~\ref{fig:modifiedlaguerreerrortiming} illustrates the relative error and calculation times for transforming to and from these modified generalized Laguerre polynomials.

Notice that the relative error in forward-backward transformations appears to grow linearly with the truncation degree, which is markedly different from the asymptotically bounded relative error in the modified Jacobi polynomial transforms. We suspect this is due to the linear growth in the condition numbers of $U$ and $V$ that nearly cancel in $V^{-1}U$ and this inherent instability in the representation of a rational function as the quotient of two polynomials in particular on unbounded domains. We propose an avenue of future research that may address this issue in \S\ref{section:conclusion}.

For generalized Laguerre series, a {\em parabola} co-axial with the $x$-axis opening to the right with focus the origin and vertex chosen to maximize the region of analyticity is the analogue of a Bernstein ellipse for Jacobi series \cite{Szasz-Yeardley-8-621-58}. For the simple pole of $r(x)$ at $x=-\gamma$, the corresponding parabola is $y^2 = 4\gamma(x+\gamma)$, and generalized Laguerre series converge root-exponentially at the rate $\mathcal{O}(e^{-2\sqrt{\gamma n}})$ as $n\to\infty$. From this rate, we include in Figure~\ref{fig:modifiedlaguerreerrortiming} a graph estimating the complexity of the precomputation based on this region of analyticity.

\begin{figure}[htbp]
\begin{center}
\begin{tabular}{cc}
\includegraphics[width=0.465\textwidth]{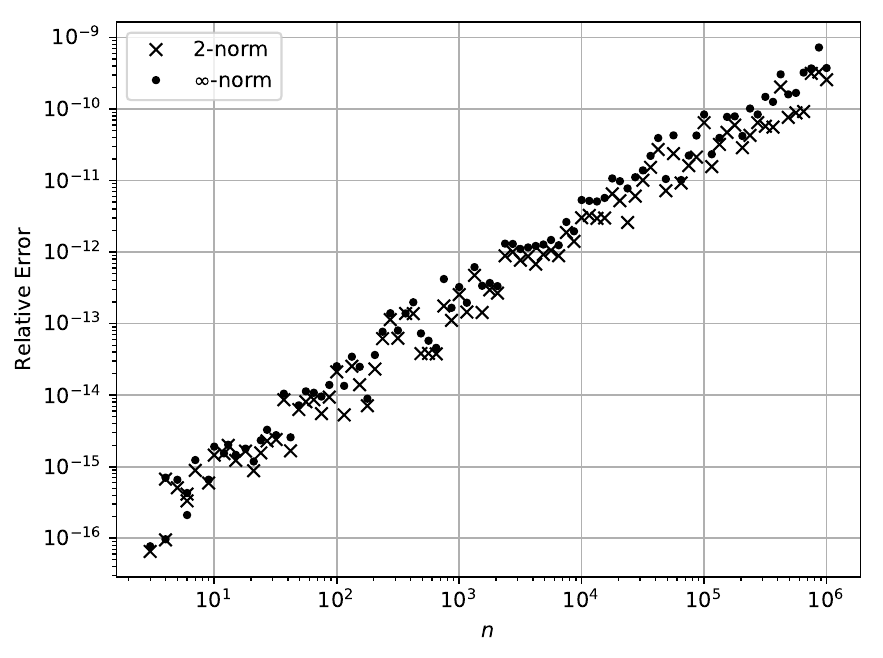}&
\includegraphics[width=0.465\textwidth]{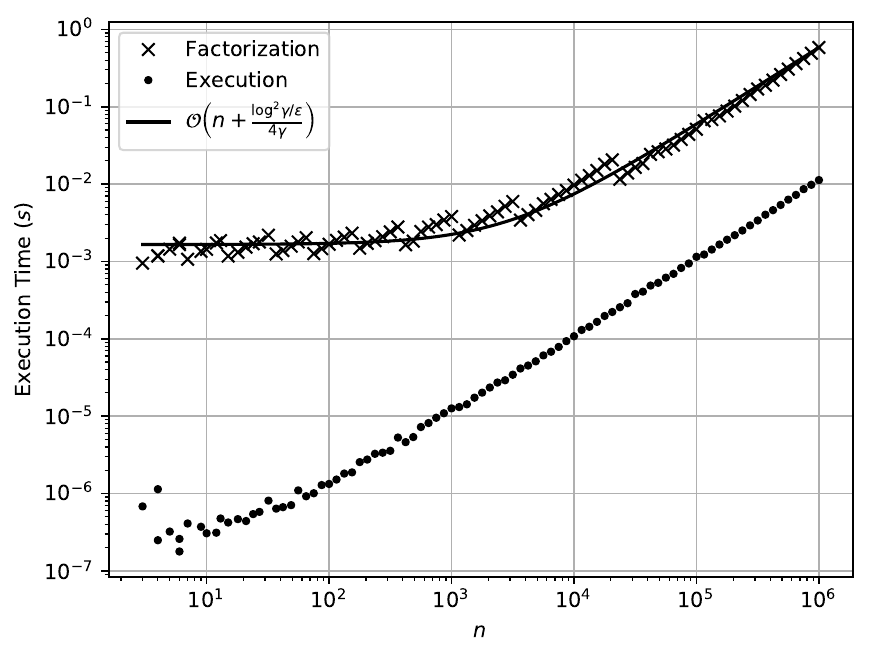}\\
\end{tabular}
\caption{Conversion of a degree-$n$ expansion in modified generalized Laguerre polynomials $q_n^{(0.25)}(x;\gamma=0.1)$ with standard normally distributed pseudorandom coefficients to generalized Laguerre polynomials with the same parameter. Left: $2$-norm and $\infty$-norm relative error in the forward and backward transformation. Right: precomputation and execution time as well as a complexity estimate inspired by Theorem~\ref{theorem:QLconvergence}.}
\label{fig:modifiedlaguerreerrortiming}
\end{center}
\end{figure}

\subsection{An orthonormal basis for $L^2(\mathbb{R})$}

Consider the problem of finding an orthonormal basis, ${\bf F}(x) = (f_0(x) \quad f_1(x) \quad f_2(x) \quad \cdots)$, for $L^2(\mathbb{R})$ with algebraic decay:
\[
\int_{-\infty}^\infty f_m(x)f_n(x){\rm\,d}x = \delta_{m,n},\quad |f_n(x)| = \Theta(x^{-\alpha}),\quad x\to\pm\infty,
\]
for~some $\frac{1}{2}<\alpha<\infty$.
Llewellyn Smith and Luca~\cite{Llewellyn-Smith-Luca-475-20190105-19} use the rational map $\displaystyle x = \frac{t}{1-t^2}$ to transform the problem to one on $(-1,1)$, where orthonormality reads:
\[
\int_{-1}^1 f_m(x(t)) f_n(x(t)) \frac{1+t^2}{(1-t^2)^2}{\rm\,d}t = \delta_{m,n}.
\]
Define $q_n(t)$ to be orthonormal polynomials in $L^2([-1,1], (1+t^2){\rm d}t)$. Of course, they are connected to the normalized Legendre polynomials by ${\bf P}(t) = {\bf Q}(t) R$ where $R$ is the upper-triangular Cholesky factor of $I+X_P^2$. It follows that:
\[
f_n(x) = (1-t^2)q_n(t) = \frac{2}{\sqrt{1+4x^2}+1} q_n\left(\frac{2x}{\sqrt{1+4x^2}+1}\right).
\]
An important property of an orthonormal basis on $L^2(\mathbb{R})$ is that differentiation, $\mathcal{D}$, is a skew-adjoint linear operator. When evolving time-dependent partial differential equations numerically, if the discretization of this operator is also skew-adjoint and sparse for any principal finite section, then fast methods exist for its spectral decomposition~\cite{Iserles-Webb-19-1191-19}, and the time evolution is stable.

We will now show that given the eigenvalue problem:
\[
\mathcal{D}u(x) = \lambda u(x), \qquad \lim_{x\to\pm\infty}u(x) = 0,
\]
there is a skew-definite and banded discretization in the orthonormal basis ${\bf F}(x)$:
\begin{align*}
\mathcal{D}{\bf F}(x)U = {\bf F}(x) D U & = {\bf F}(x) U \Lambda,\\
R^\top DRV & = R^\top RV\Lambda,
\end{align*}
and $U = RV$.

By changing coordinates:
\[
D = \int_\bbR {\bf F}(x)^\top \mathcal{D}{\bf F}(x){\rm\,d}x = \int_{-1}^1 (1-t^2){\bf Q}(t)^\top \mathcal{D}\left[(1-t^2){\bf Q}(t)\right]{\rm\,d}t,
\]
and the connection problem congruence transformation:
\[
R^\top D R = \int_{-1}^1 (1-t^2){\bf P}(t)^\top \mathcal{D}\left[(1-t^2){\bf P}(t)\right]{\rm\,d}t,
\]
uncovers banded sparsity on the right-hand side, by orthogonality of Legendre polynomials. Legendre polynomials are an instance of Jacobi polynomials, and by classical connections we find two formul\ae~for the right-hand side that also show the skew symmetry:
\[
R^\top D R = (I-X_P^2)(-D_P^{P'})^\top R_P^{P'} = (R_P^{P'})^\top D_P^{P'}(I-X_P^2).
\]

Another useful property for any basis is a uniform pointwise bound. We can show that:
\[
|f_n(x)| < \sqrt{\frac{2}{\pi}}\frac{2^{\frac{7}{4}}}{(1+4x^2)^{\frac{3}{8}}},\quad\forall x\in\mathbb{R}.
\]
By using line (1) in Table~\ref{table:MOPs}, $(1+t^2){\bf Q}(t) = {\bf P}(t)R^\top$, and noting that by symmetry $R$ is banded with only two nontrivial bands:
\begin{align*}
|f_n(x)| & = |(1-t^2)q_n(t)|,\\
& \le |(1-t^2)(1+t^2)q_n(t)|,\\
& = |(1-t^2)(p_n(t)R_{n,n}+p_{n+2}(t)R_{n,n+2})|.
\end{align*}
Recall the sharpened Bernstein inequality~\cite{Antonov-Holsevnikov-13-163-81}:
\[
(1-x^2)^{\frac{1}{4}}|P_n(x)| < \sqrt{\frac{2}{\pi}}\frac{1}{\sqrt{n+\frac{1}{2}}}, \quad x\in[-1,1].
\]
Then, since $p_n(t)$ are the orthonormal Legendre polynomials:
\begin{align*}
|f_n(x)| & \le |(1-t^2)^{\frac{3}{4}}|\left[|(1-t^2)^{\frac{1}{4}}p_n(t)||R_{n,n}| + |(1-t^2)^{\frac{1}{4}}p_{n+2}(t)||R_{n,n+2}|\right],\\
& < |(1-t^2)^{\frac{3}{4}}|\sqrt{\frac{2}{\pi}}\left(|R_{n,n}| + |R_{n,n+2}|\right).
\end{align*}
Now:
\begin{align*}
|R_{n,n}| + |R_{n,n+2}| & = \|R^\top e_n\|_1 \le \sqrt{2}\|R^\top e_n\|_2 \le \sqrt{2}\|R^\top\|_2,\\
& = \sqrt{2}\sqrt{\|R^\top R\|_2} = 2,\quad{\rm since}\quad \|R^\top R\|_2 = \sup_{t\in[-1,1]}|1+t^2| = 2.
\end{align*}

\subsection{Orthogonal polynomials on an annulus}\label{subsection:annulus}

Consider $P_n^{t,(\alpha,\beta,\gamma)}(x)$ to be orthonormal polynomials in $L^2([-1,1], (1-x)^\alpha(1+x)^\beta(t+x)^\gamma \dx)$, for parameter values $\{t>1, \alpha,\beta>-1, \gamma\in\mathbb{R}\}\cup\{t=1, \alpha,\beta+\gamma>-1\}$. If $\gamma\in\mathbb{Z}$, then $(t+x)^\gamma$ is either polynomial or rational and our algorithms to connect ${\bf P}^{t,(\alpha,\beta,\gamma)}(x)$ to the Jacobi polynomials ${\bf P}^{(\alpha,\beta)}(x)$ are immediately applicable. For $\gamma\in\mathbb{R}\setminus\mathbb{Z}$ and $t>1$, $(t+x)^\gamma$ may be well-approximated by polynomials and rationals alike.

Real orthonormal annulus polynomials in:
\[
L^2(\{(r,\theta) : \rho < r < 1, 0 < \theta < 2\pi\},  r^{2\gamma+1}(r^2-\rho^2)^\alpha(1-r^2)^\beta{\rm\,d} r{\rm\,d}\theta),
\]
are:
\begin{align*}
Z_{\ell,m}^{\rho,(\alpha,\beta,\gamma)}(r,\theta) = & \sqrt{2} \left(\frac{2}{1-\rho^2}\right)^{\frac{|m|+\alpha+\beta+\gamma+1}{2}} r^{|m|}P_{\frac{\ell-|m|}{2}}^{\frac{1+\rho^2}{1-\rho^2},(\beta,\alpha,|m|+\gamma)}\left(\frac{2r^2-1-\rho^2}{1-\rho^2}\right)\\
&\times \sqrt{\frac{2-\delta_{m,0}}{2\pi}} \left\{\begin{array}{ccc} \cos(m\theta) & {\rm for} & m \ge 0,\\ \sin(|m|\theta) & {\rm for} & m < 0.\end{array}\right.
\end{align*}
For the purposes of synthesis and analysis on tensor-product grids on the annulus, we convert these polynomials to a direct sum of tensor-product bases through a sequence of orthogonal transformations, a generalization that has the same structure as Zernike polynomial transforms~\cite{Olver-Slevinsky-Townsend-29-573-20,Slevinsky-GitHub-FastTransformsC}. These are computed by considering that decrementing the order $m$ in steps of $2$ requires the computation of line (2) in Table~\ref{table:MOPs}:
\[
(t+x){\bf P}^{t,(\alpha,\beta,\gamma+m+2)}(x) = {\bf P}^{t,(\alpha,\beta,\gamma+m)}(x)Q_{t,(\alpha,\beta,\gamma+m+2)}^{t,(\alpha,\beta,\gamma+m)}.
\]
Moreover, the similarity transformation between the Jacobi matrices for ${\bf P}^{t,(\alpha,\beta,\gamma+m)}(x)$ and ${\bf P}^{t,(\alpha,\beta,\gamma+m+2)}(x)$ requires the upper-triangular factor in line (2) in Table~\ref{table:MOPs}:
\[
{\bf P}^{t,(\alpha,\beta,\gamma+m)}(x) = {\bf P}^{t,(\alpha,\beta,\gamma+m+2)}(x)R_{t,(\alpha,\beta,\gamma+m)}^{t,(\alpha,\beta,\gamma+m+2)},
\]
enabling:
\[
X_{t,(\alpha,\beta,\gamma+m+2)} = R_{t,(\alpha,\beta,\gamma+m)}^{t,(\alpha,\beta,\gamma+m+2)} X_{t,(\alpha,\beta,\gamma+m)} (R_{t,(\alpha,\beta,\gamma+m)}^{t,(\alpha,\beta,\gamma+m+2)})^{-1}.
\]
Both factors may be computed simultaneously from:
\[
tI+X_{t,(\alpha,\beta,\gamma+m)} = Q_{t,(\alpha,\beta,\gamma+m+2)}^{t,(\alpha,\beta,\gamma+m)}R_{t,(\alpha,\beta,\gamma+m)}^{t,(\alpha,\beta,\gamma+m+2)}.
\]

\subsection{Orthogonal polynomials on a spherical band}\label{subsection:sphericalband}

For multi-indices $\bs{t}$ and $\bs{\alpha}$, consider $P_n^{\bs{t},(\bs{\alpha})}(x)$ to be orthonormal polynomials in:
\[
L^2([-1,1], (t_1-x)^{\alpha_1}(1-x)^{\alpha_2}(1+x)^{\alpha_3}(t_2+x)^{\alpha_4}\dx).
\]
On a spherical band:
\[
\mathbb{S}_{\bs{\theta}}^2 = \{(\theta,\varphi) : 0 < \theta_1 < \theta < \theta_2 < \pi, 0 < \varphi < 2\pi\},
\]
with weight:
\[
w^{\bs{\theta},(\bs{\alpha})}(\theta) = (2\sin^2\tfrac{\theta}{2})^{\alpha_1}(\cos\theta_1-\cos\theta)^{\alpha_2}(\cos\theta-\cos\theta_2)^{\alpha_3}(2\cos^2\tfrac{\theta}{2})^{\alpha_4},
\]
real orthonormal spherical band polynomials in $L^2(\mathbb{S}_{\bs{\theta}}^2, w^{\bs{\theta},(\bs{\alpha})}(\theta)\sin\theta{\rm\,d}\theta{\rm\,d}\varphi)$ are:
\begin{align*}
Y_{\ell,m}^{\bs{\theta},(\bs{\alpha})}(\theta,\varphi) = & \left(\frac{2}{\cos\theta_1-\cos\theta_2}\right)^{\frac{|\bs{\alpha}|+2|m|+1}{2}} \sin^{|m|}\!\theta P_{\ell-|m|}^{\bs{t},(\bs{\alpha}_{|m|})}\left(\frac{2\cos\theta-\cos\theta_1-\cos\theta_2}{\cos\theta_1-\cos\theta_2}\right)\\
&\times \sqrt{\frac{2-\delta_{m,0}}{2\pi}} \left\{\begin{array}{ccc} \cos(m\theta) & {\rm for} & m \ge 0,\\ \sin(|m|\theta) & {\rm for} & m < 0,\end{array}\right.
\end{align*}
where:
\[
\bs{t} = (t_1, t_2)^\top = \left(\frac{2-\cos\theta_1-\cos\theta_2}{\cos\theta_1-\cos\theta_2}, \frac{2+\cos\theta_1+\cos\theta_2}{\cos\theta_1-\cos\theta_2}\right)^\top,
\]
and $\bs{\alpha}_m = (\alpha_1+m, \alpha_2, \alpha_3, \alpha_4+m)^\top$ and $|\bs{\alpha}| = \alpha_1+\alpha_2+\alpha_3+\alpha_4$.

For the purposes of synthesis and analysis on tensor-product grids on the spherical band, we convert these polynomials to a direct sum of tensor-product bases through a sequence of orthogonal transformations, a generalization that has the same structure as spherical harmonic transforms~\cite{Slevinsky-47-585-19,Slevinsky-GitHub-FastTransformsC}. These are computed by considering that decrementing the order $m$ in steps of $2$ requires the computation of line (2) in Table~\ref{table:MOPs}:
\[
(t_1-x)(t_2+x){\bf P}^{\bs{t},(\bs{\alpha}_{m+2})}(x) = {\bf P}^{\bs{t},(\bs{\alpha}_m)}(x)Q_{\bs{t},(\bs{\alpha}_{m+2})}^{\bs{t},(\bs{\alpha}_m)}.
\]
Moreover, the similarity transformation between the Jacobi matrices for ${\bf P}^{\bs{t},(\bs{\alpha}_m)}(x)$ and ${\bf P}^{\bs{t},(\bs{\alpha}_{m+2})}(x)$ requires the upper-triangular factor in line (2) in Table~\ref{table:MOPs}:
\[
{\bf P}^{\bs{t},(\bs{\alpha}_m)}(x) = {\bf P}^{\bs{t},(\bs{\alpha}_{m+2})}(x)R_{\bs{t},(\bs{\alpha}_m)}^{\bs{t},(\bs{\alpha}_{m+2})},
\]
enabling:
\[
X_{\bs{t},(\bs{\alpha}_{m+2})} = R_{\bs{t},(\bs{\alpha}_m)}^{\bs{t},(\bs{\alpha}_{m+2})} X_{\bs{t},(\bs{\alpha}_m)} (R_{\bs{t},(\bs{\alpha}_m)}^{\bs{t},(\bs{\alpha}_{m+2})})^{-1}.
\]
Both factors may be computed simultaneously from:
\[
(t_1I-X_{\bs{t},(\bs{\alpha}_m)})(t_2I+X_{\bs{t},(\bs{\alpha}_m)}) = Q_{\bs{t},(\bs{\alpha}_{m+2})}^{\bs{t},(\bs{\alpha}_m)}R_{\bs{t},(\bs{\alpha}_m)}^{\bs{t},(\bs{\alpha}_{m+2})}.
\]

\begin{remark}
The orthogonal structures of an annular sector and a spherical quadrangle follow naturally by replacing the Fourier modes with orthonormal polynomials on an arc.
\end{remark}

\section{Conclusions \& Future Directions}\label{section:conclusion}

A straightforward generalization of this work is to develop efficient algorithms in the multivariate setting. The Koornwinder construction~\cite{Koornwinder-435-75} provides bivariate analogues of the classical orthogonal polynomials, and other multivariate orthogonal polynomials are known in greater than two dimensions~\cite{Dunkl-Xu-14}. Bivariate polynomial and rational measure modifications can extend these polynomials to new and more interesting measures, where polynomial modifications result in banded-block-banded matrices: block-banded matrices whose blocks are also banded. A challenge is to explore improving the complexity of banded-block-banded matrix factorizations. In higher dimensions, symmetric Jacobi matrices of Koornwinder-like constructions are tridiagonal-block-banded~\cite{Kowalski-13-309-82,Xu-62-687-94,Xu-342-855-94}; hence, the ``bandwidths'' of polynomial weight modifications grows with the degree, resulting in an $\mathcal{O}(n^4)$ Cholesky factorization with $\mathcal{O}(n^3)$ nonzero entries in two dimensions. These complexities stand in contrast to the linear complexities in the univariate setting.

Another logical extension is to consider the full semi-classical setting, where the weight function $w(x)$ satisfies a first-order linear homogeneous differential equation with polynomial coefficients $a(x)$ and $b(x)$:
\[
\DD(aw) = bw.
\]
In this Pearson differential equation~\eqref{eq:Pearson}, the restrictions on the polynomial degrees are removed. Interesting connections between semi-classical orthogonal polynomials and the Painlev\'e transcendents~\cite{Magnus-57-215-95} have been discovered. While this case may not exhibit a visible sparsity in the connection problem, such as bandedness, it would be worthwhile exploring methods to formulate the connection coefficients as solutions to matrix equations.

Compared with previous approaches to rational measure modifications~\cite{Uvarov-9-1253-69,Price-16-999-79,Skrzipek-41-331-92}, where the polynomials are factored as a product of degree-$1$ or $2$ polynomials, our infinite-dimensional matrix factorizations leave $u(x)$ and $v(x)$ whole. This allows the more numerically stable use of orthogonal polynomial expansions of these variable coefficients, avoids the potentially unstable computation of their roots, and solves the connection problem in one step. Representing rational functions as a ratio of polynomials is no panacea. For example, the Newman rational~\cite{Newman-11-11-64} approximating $|x|$ creates numerator and denominator polynomials with large dynamic ranges. Hence, by orthogonal polynomial expansion, absolutely small negative parts of $v(X_P)$ are introduced numerically that lead to the nonexistence of $QL$ and reverse Cholesky factorizations. This example and others inducing Froissart doublets have led to the construction of an even more stable representation of rational functions that has been used to develop the so-called AAA algorithm~\cite{Nakatsukasa-Sete-Trefethen-40-A1494-18}. In AAA, a rational function is represented as a ratio of {\em rationals}:
\[
r(x) = \frac{\displaystyle \sum_{k=0}^n\frac{w_kf_k}{x-x_k}}{\displaystyle \sum_{k=0}^n\frac{w_k}{x-x_k}}.
\]
This definition is augmented by $\lim_{x\to x_k}r(x) = f_k$ and the constants $x_k$ and $w_k$ are the parameters at one's disposal to range over all type-$(n,n)$ rational functions. In AAA, the support points $x_k$ are greedily chosen from among a set of user-supplied sample points in the approximation domain. Consequently, both the numerator and denominator rationals have singularities on the approximation domain that cancel but which are a problem when considering (approximately) evaluating $r(X_P)$. A partial fraction decomposition could reveal the poles as distinct from the support points and the connection coefficients may be recovered by Cholesky factorization of a sum of invertible symmetric tridiagonal matrices~\cite[\S 7.2.1]{Vandebril-Van-Barel-Mastronardi-1-08}.

A final avenue of future work is to investigate the structure of the connection coefficients with piecewise-defined polynomial and rational measure modifications. Relatedly, we note that $\mathcal{O}(n^2)$ algorithms exist~\cite{Elhay-Golub-Kautsky-32-143-92} to construct Jacobi matrices for sums of weight functions. It remains to be seen whether or not these connection problems may be solved in reduced complexities for sums of weight functions. Here, we do not anticipate linear complexity; rather, we suspect quasi-optimal complexities based on hierarchical factorizations of the connection problem.

\section*{Acknowledgments}

We thank Mioara Joldes and Ioannis Papadopoulos for interesting discussions regarding this work and extensions, Marcus Webb for crucial guidance on infinite-dimensional factorizations, Yuji Nakatsukasa and Matthew Colbrook for insightful comments on using nonstandard inner product Householder approaches for infinite-dimensional matrix factorizations in the early phases of the project, as well as Jiajie Yao for valuable feedback.\\
TSG and SO were partially supported by an EPSRC grant (EP/T022132/1). RMS is supported by the Natural Sciences and Engineering Research Council of Canada, through a Discovery Grant (RGPIN-2017-05514). SO was also supported by a Leverhulme Trust Research Project Grant (RPG-2019-144). TSG was also supported by a PIMS-Simons postdoctoral fellowship, jointly funded by the Pacific Institute for the Mathematical Sciences (PIMS) and the Simons Foundation.

\bibliographystyle{siamplain}
\bibliography{refs}

\end{document}